\newtheorem{teo}{Theorem}[section]
\newtheorem{oss}[teo]{Remark}
\newtheorem{Prop}[teo]{Proposition}
\newtheorem{lemma}[teo]{Lemma}
\newtheorem{Defi}[teo]{Definition}
\newtheorem{es}[teo]{Example}
\newtheorem{corollario}[teo]{Corollary}
\newtheorem{no}[teo]{Notation}
\newcommand{\cc}{_{^{_\HH}}}
\newcommand{\vv}{_{^{_\VV}}}
\newcommand{\ccx}{_{^{_{\HH_x}}}}
\newcommand{\res}{\mathop{\hbox{\vrule height 7pt width .5pt depth 0pt
\vrule height .5pt width 6pt depth 0pt\,}}\nolimits}
\def \cont{{\mathbf{C}}}
\def \vd{v}
\def \DH{h}
\def \ore{\Tor{_{^{\!E}}}}
\def \blae{\nabla^{_{^{\!E}}}}
\newcommand{\LL}{\mathop{\hbox{\vrule height .5pt width 6pt depth
0pt \vrule height 7pt width .5pt depth 0pt\,}}\nolimits}
\newcommand{\rr}{_{^{_\mathcal{R}}}}
\newcommand{\sr}{_{^{_\mathcal{S\!R}}}}
\newcommand{\ck}{_{^{_{\HH_k}}}}
\newcommand{\ci}{_{^{_{\HH_i}}}}
\newcommand{\ctr}{_{^{_{\HH_3}}}}
\newcommand{\cd}{_{^{_{\HH_2}}}}
\newcommand{\cu}{_{^{_{\HH_1}}}}
\def \cin{{\mathbf{C}^{\infty}}}
\def\dim {\mathrm{dim}}
\def\dc {d\cc}
\def\dg{\textit{grad}\cc}
\def \per {\sigma^{n-1}\cc}
\newcommand{\ce}{_{^{_E}}}
\def\HC{\mathcal{H}^Q_{\mathbf{cc}}}
\def\SC{C}
\def \nn{\nu\cc}
\def \XH{\mathfrak{X}(\HH)}
\def \XX{\mathfrak{X}}
\def \P{{\mathcal{P}}}
\def \PH{\P\cc}
\def \Om{\Omega}
\def \ee{\mathrm{e}}
\def \R{\mathbb{R}}
\def \Rn{\mathbb{R}^{\DN}}
\def \GG{\mathbb{G}}
\def \gg{\mathfrak{g}}
\def\grr{{{\mathtt{gr}}}}
\def\gc{\nabla^{^{_{\HH}}}}
\def \Tor{{\textsc{T}}}
\def \RC{\textsc{R}}
\def \nn{\nu_{_{\!\HH}}}
\def \XG{\mathfrak{X}(\GG)}
\def \ee{\mathrm{e}}
\def \Om{\Omega}
\def \Rn{\mathbb{R}^{n}}
\def \R{\mathbb{R}}
\def \N{\mathbb{N}}
\def \cji {c_{j\,i}(x)}
\def \C { C(x):=[\cji]_{j,i},\,\, {j=1,\ldots,m \,,\, i=1,\ldots,n}}
\def \Qdim {Q:=\sum_{i=1}^{k}i\,\DH_i}
\def \X {X=(X_{1}, \ldots, X_{m_1})}
\def \X0 {X_{1}(0)\!=\!\partial_{x_{1}}, \ldots, X_{m_1}(0)\!=\!\partial_{x_{m_1}}}
\def \HG {\HH\GG}
\def \HS {\HH\!{S}}
\def \TG {\mathit{T}\GG}
\def \HH {\mathit{H}}
\def \UH {\mathit{UH}}
\def \VV {\mathit{V}}
\def \TT {\mathit{T}}
\def \TS {\mathit{T}S}
\def \grad{\textit{grad}}
\def \C0H{\mathbf{C}_{0}^{\infty}(U,\HG)}
\def \C00{\mathbf{C}_{0}^{\infty}(U)}
\def \C01{\mathbf{C}_{0}^{1}(U)}
\def \L1{d\,\mathcal{L}^n}
\def \H1{\mathcal{H}_{{\bf cc}}^{1}}
\def \exp{\textsl{exp\,}}
\def \llog{{\textsl{log}}_\gg}
\def \esp{{\textsl{exp}}_\gg}
\def \Om{\Omega}
\def \Rn{\mathbb{R}^{n}}
\def \R{\mathbb{R}}
\def \N{\mathbb{N}}
\def \cji {c_{j\,i}(x)}
\def \C { C(x):=[\cji]_{j,i},\,\, {j=1,\ldots,m \,,\, i=1,\ldots,n}}
\def \GG{\mathbb{G}}
\def \gg{\mathfrak{g}}
\def \X {X=(X_{1}, \ldots, X_{m_1})}
\def \X0 {X_{1}(0)\!=\!\partial_{x_{1}}, \ldots, X_{m_1}(0)\!=\!\partial_{x_{m_1}}}
\def \HG {\mathit{H}}
\def \C0H{\mathbf{C}_{0}^{\infty}(\Om,\HG)}
\def \C00{\mathbf{C}_{0}^{\infty}(\Om)}
\def \C01{\mathbf{C}_{0}^{1}(\Om)}
\def\GG{\mathbb{G}}
\title{CC-distance and
metric normal of smooth hypersurfaces in sub-Riemannian Carnot
groups}
\author{Nicola Arcozzi \thanks{F. F. is partially supported by GALA project Geometric Analysis in Lie groups and Applications,
supported by the European Commission within the 6th Framework Programme and by the PRIN project
Viscosity, metric and control theoretic methods in nonlinear partial differential equations, MIUR (Italy). } \and
Fausto Ferrari \thanks{F. F. is partially supported by GALA project Geometric Analysis in Lie groups and Applications,
supported by the European Commission within the 6th Framework Programme and by the PRIN project
Viscosity, metric and control theoretic methods in nonlinear partial differential equations, MIUR (Italy). } \and Francescopaolo Montefalcone
\thanks{F. M. is partially supported by University of Bologna,
Italy, founds for selected research topics and by GNAMPA of INdAM,
Italy.} }
\begin{document}
\maketitle

\begin{abstract}
In this paper we study the main geometric properties of the
Carnot-Carath\'eodory (abbreviated CC) distance $\dc$  in the
setting of $k$-step sub-Riemannian Carnot groups from many
different points of view; see Section \ref{onnorm} and Section
\ref{ipo}. An extensive study of the so-called {\it normal
CC-geodesics} is given. We state and prove some related
variational formulae and we find suitable Jacobi-type equations
for normal CC-geodesics; see Section \ref{ipo}. One of our main
results is a sub-Riemannian version of the Gauss Lemma; see
Section \ref{GLem}. We show the existence of the {\it metric
normal} for smooth non-characteristic hypersurfaces; see Corollary
\ref{important}. In Section \ref{ipoo} we compute the
sub-Riemannian exponential map $\exp\sr$ for the case of $2$-step
Carnot groups. Other features of normal CC-geodesics are then
studied. In Section \ref{appe} we show how the system of normal
CC-geodesic equations can be integrated step by step. Finally, in
Section \ref{us} we show a regularity property of the CC-distance
function $\delta\cc$ from a $\cont^k$-smooth hypersurface $S$; see
Theorem \ref{ccdreg}.\\{\noindent \scriptsize \sc Key words and
phrases:} {\scriptsize{\textsf {Carnot groups; Sub-Riemannian
geometry; CC-metrics; distance from hypersurfaces; metric normal;
normal geodesics.}}}\\{\scriptsize\sc{\noindent Mathematics
Subject Classification:}}\,{\scriptsize \,49Q15, 46E35, 22E60.}
\end{abstract}

\tableofcontents

\section{\large Introduction}

In the last thirty years, many progresses have been done in
developing Analysis through very general geometries and metric
spaces. This trend in mathematical research, already initiated by
Federer's treatise \cite{FE}, has been pursued by many authors,
with different point of views: Ambrosio \cite{A2}, Ambrosio and
Kirchheim \cite{AK1, AK2}, Capogna, Danielli and Garofalo
\cite{CDG}, Cheeger \cite{Che}, Cheeger and Kleiner
\cite{Cheeger1}, David and Semmes \cite{DaSe}, De Giorgi
\cite{DG}, Gromov \cite{Gr1, Gr2}, Franchi, Gallot and Wheeden
\cite{FGW}, Franchi and Lanconelli \cite{FLanc},  Franchi,
Serapioni and Serra Cassano \cite{FSSC3, FSSC6}, Garofalo and
Nhieu \cite{GN}, Heinonen and Koskela \cite{HaKo},  Jerison
\cite{Jer}, Korany and Riemann \cite{KR}, Pansu \cite{P2} but, of
course, this list is not complete.

 {\it Sub-Riemannian} or {\it
Carnot-Carath\'eodory} geometries have become a research field of
great interest, also because of their wide connections with
different fields in Mathematics and Physics, such as PDE's,
Control Theory, Mechanics, Theoretical Computer Science.

 For references, comments
and some different perspectives on sub-Riemannian
geometry we refer the reader to Agrachev and Ghautier \cite{Ag1}, Agrachev and Sarychev \cite{Ag2},
Gromov  \cite{Gr2}, Montgomery
\cite{Montgomery},   Pansu \cite{P2, P4},  and Strichartz
\cite{Stric}, Vershik and Gershkovich
\cite{Ver}.

 Very recently, the
 so-called Visual Geometry has also received new impulses from this
 field; see \cite{CMS},  \cite{CM}  and references therein.

In this article we begin a study of fine properties of the
function ``sub-Riemannian distance from a hypersurface'' in Carnot
groups. Before giving a formal outline of the context and content
of the paper, we would like to briefly and colloquially discuss
what we think are the main features of this research.

The elementary analysis of the function ``distance from a closed
set'' $E$ with smooth boundary $\partial E$ in the Euclidean space
$\mathbb{R}^n$ is based on the intuitive idea of ``moving'' a
sufficiently small closed Euclidean ball $B$ in
$\mathbb{R}^n\setminus E$ until it touches $\partial E$ at some
point $P$.  Let $B_0$ be the ball $B$ in its final position and
let $O$ be its center. The Euclidean segment $OP$ is (part of) the
segment perpendicular to $\partial E$ at $P$. For points $Q$ in
$OP$, the distance from $Q$ to $E$ is realized by the segment
$OQ$. The idea of a "sufficiently small moving ball", however
intuitive, is not easy to deal with in calculations and it is not
really necessary. In fact, it is easier to work with the final
configuration: given a point $P$ on $\partial E$ and the direction
$\nu$ normal to $\partial E$ at $P$, we can recover $B_0$ as any
of the sufficiently small balls centered at a point $O$ on the
segment leaving $P$ in the direction $\nu$, having radius $OP$.
The normal direction $\nu$ is a first order differential object;
while the fact that the ball $B_0$ touches $E$ at $P$ only,
generally depends on the ``extrinsic'' curvatures of $\partial E$
at P, which is a second order differential object.

Exactly the same line of reasoning works if $E$ lives in a
Riemannian manifold. The direction $\nu$, normal to $\partial E$
at $P$, determines the geodesic $\gamma$ having speed $\nu$ at
$P$, the \it geodesic normal to $\partial E$ at $P$. \rm The ball
$B_0$ can then be recovered as a ball having center on $\gamma$,
sufficiently close to $P$.

The sub-Riemannian case presents new difficulties. Consider, for
expository reasons, the case of $\mathbb{R}^n$ endowed with a
sub-Riemannian structure of codimension $\vd\geq1$. In particular,
to each point $x\in\mathbb{R}^n$ is associated a linear space
$\HH_x\subset\TT_x\Rn$ of admissible {\it horizontal directions},
$\mbox{dim}(\HH_x):=\DH=n-v$, and a metric $g\ccx$ to measure
lengths of vectors in $\HH_x$. We stress that the metric $g\cc$
can also be defined (non-canonically) as the restriction of a
 Riemannian  metric $g$ defined on the whole tangent bundle
$\TT\Rn$.

We can use this structure to define lengths of {\it admissible}
curves (those whose velocity belong to $\HH$) and, under suitable
hypotheses,  this leads to a new distance $\dc$ in $\mathbb{R}^n$,
called Carnot-Carath\'eodory (abbreviated CC) distance. This
distance $\dc$ is a length-metric, realized by geodesics In this
introduction, we assume they are smooth, although the regularity
of the so-called CC-geodesics is, in general, an open problem; see
Section \ref{onnorm} and Section \ref{ipoo}.

 We can mentally
repeat the previous procedure, moving a small metric ball $B$
until it touches $\partial E$ at $x$. We assume that $x$ is {\it
non-characteristic}, i.e. roughly speaking, the linear space
$\HH_x$ is not totally contained in $\TT_x\partial E$, the tangent
space to $\partial E$ at $x$.  In particular, we can determine a
direction $\nn(x)$ in $\HH_x$ (the {\it unit horizontal normal to
$\partial E$ at $x$}) which is orthogonal with respect to $g\ccx$ to all
directions  in $\HH_x\cap \TT_x\partial E$. The novelty is that
there are {\it infinitely many} CC-geodesics $\gamma$ passing
through $x$ and having tangent vector $\nn(x)$ there. In fact,
there is a $\vd$-parameter family of them, where $\vd={\rm
codim}\HH$. Hence, we can not identify the final ball $B_0$ based
on the information contained in $\nn$ alone.

However, in several cases there exists a unique {\it metric
normal}  $\gamma_{\mathcal{N}}$ to $\partial E$ at $x$. By this we
mean the (maximal) CC-geodesic curve $\gamma_{\mathcal{N}}$
starting from $x$, with the property that, for each $y$ in
$\gamma_{\mathcal{N}}\setminus E$, $\dc(x, y)$ realizes the
distance between $y$ and $E$. i.e., the metric ball centered at
$y$ and having radius $\dc(x, y)$ touches $\partial E$ at $x$. The
notion of metric normal was introduced by the first two authors in
the context of the (lowest dimensional) Heisenberg group
$\mathbb{H}^1$; see \cite{AF}.

 In order to specify the metric normal among the CC-geodesics
 tangent to $\nn$ at $x\in\partial E$ we need  $i$ real parameters. In the Heisenberg
case, $i=1$, the extra parameter could be read off from the
parametrization of $\partial E$. This fact is crucial, because
from the equation for $\partial E$ one can explicitly write down
the ``exponential map'' associated with $\partial E$, and, for
instance, deduce regularity properties of the function ``distance
from $E$''.

In \cite{AF2} it was observed that, in the Heisenberg case, the
extra parameter could be read as a ``imaginary curvature'' for
$\partial E.$ At the same time and independently, the third author
\cite{Monteb}, continuing his work \cite{Monte} on variational
formulas in sub-Riemannian Carnot groups, associated to each
non-characteristic point $x\in\partial E$ a ``vertical vector''
$\varpi$ of parameters, which reduces to the imaginary curvature
in the Heisenberg case.  These parameters, which can be
interpreted as ``curvatures'' (or Lagrangian multipliers; see
Section \ref{ipo}), have no immediate counterpart in Riemannian
geometry and play an important role in the geometric analysis of
$\partial E$ in this sub-Riemannian setting. An interesting
feature of these ``curvatures'', which appears in seminal form in
\cite{AF} and in much greater generality in \cite{Monte, Monteb},
is they can best be computed in terms of the {\it Riemannian
gradient} (with respect to the fixed Riemannian metric $g$ on the base
manifold) of the smooth function $f$ defining $\partial E$ locally
(i.e., $\partial E=\{f=0\}$, locally). Roughly speaking $\varpi$
can be regarded as the ``non-horizontal'' (more precisely,
vertical; see Section \ref{prelcar}) part of the Riemannian
gradient of $f$; see Definition \ref{carca}.

 Researchers working with ``first order''
geometric measure theory of hypersurfaces in Carnot group
(isoperimetric inequalities, Lipschitz submanifolds, functions of
$\HH$-bounded variation, etc.) are generally concerned with the
\it horizontal gradient \rm of the given defining function only.

It is in fact an accepted rule-of-thumb that horizontal vectors
contain all the first order geometric information of the group,
and that this extends to some second order objects (e.g.,
$\HH$-mean curvature and minimal surfaces). When dealing with such
``second order information'' as that encoded by the distance
function, the ``forgotten component'' of the Riemannian gradient
has to be taken into account. In this paper, we bring together
these two different perspectives, relating these ``curvatures''
$\varpi$ to the metric normal $\gamma_\mathcal{N}$, hence to the
distance function from a smooth hypersurface.

 This way, among
others things, we shall extend to Carnot groups many results of
\cite{AF} and \cite{AF2}, but with a somewhat different
approach.\\

We now give a less informal view of the paper and of its context.
\begin{Defi}[Metric Normal]
Let $(X,d)$ be a metric space and let $E$ be closed in $X$. The
{\it metric normal} to $E$ at $x\in E$ is the set
$\gamma_{\mathcal N}:=\{y\in X:\ d(x, y)=d(y,E)\}$, where
$d(y,E):=\inf_{z\in E}d(z,y)$.\end{Defi} In this paper, the metric
space $(X,d)$ is a {\it Carnot group}  $\GG$ endowed with its
``natural'' CC-distance $\dc$.

 A $k$-step Carnot group $\GG$ is a Lie
group, whose Lie algebra $\mathfrak{g}$ admits a stratification
$\mathfrak{g}=\HH\oplus \VV$, where $\VV:=\oplus_{j=2}^k \HH_k$
$\HH_1:=\HH,\ \HH_{j+1}=[\HH_j,\HH_1]$ and $\HH_{k+1}=\{0\}$. In
$\HH$ we find the so-called horizontal vectors, while in $V$ we
find the vertical vectors. \begin{no}[Projections]Throughout this
paper, the mappings $\P\ci:\TG\longrightarrow\HH_i$,
$\P\vv:\TG\longrightarrow\VV$, will denote the projection operator
onto the subbundles $\HH_i\,\,(i=1,...,k)$ and $\VV$,
respectively.
\end{no}

Remind that, as for any Lie group, an internal group of
translations, called ``left-translations'', is given on $\GG$. In
addiction, any Carnot group $\GG$ has intrinsic dilations  making
it a homogeneous group; see Section \ref{prelcar}. In general, we
shall say that a property in $\mathbb{G}$ is intrinsic whenever it
is invariant with respect to these transformations.

The {\it horizontal space} $\HH$ is endowed with a Riemannian
metric $g\cc$ which is used in the obvious way to define lengths
of {\it horizontal curves} $\gamma$ in $\mathbb{G}$ (i.e.
$\gamma:\R\longrightarrow\GG$ is absolutely continuous and
$\dot{\gamma}(t)\in \HH$ for a.e. $t\in\R$). The distance between
points in $\mathbb{G}$ is the infimum of the lengths of curves
joining them, and it can be proved that such distance is realized
by the length of (not necessarily unique) CC-geodesic curves; see
Section \ref{onnorm} and Section \ref{ipo} for a precise
definition of CC-geodesic.

 We stress that is a difficult open problem showing, or
disproving, that CC-geodesics are smooth for all Carnot groups;
see \cite{Montgomery, Montgomery2}.

In the Euclidean case, by assuming reasonable hypotheses on
$\partial E$, the metric normal at a point $x\in\partial E$ can be
identified with the normal direction associated with the surface
at $x\in\partial E,$ because it is a segment; see \cite{FE, FE1}.
As a consequence, whenever we recall the normal direction, we
emphasize the linear aspect of the metric normal, forgetting that
it is, first of all, a path.  In the Riemannian  setting it is
known that, if we stay near to a smooth hypersurface $S$  there
exists one, and only one, unit vector belonging to the tangent
space to the base Riemannian  manifold that select the metric
normal to the hypersurface $S$ at $x$. As already said, in the
sub-Riemannian setting this problem it is not so straightforward.
Indeed, just to fix the ideas in the simplest Heisenberg group
$\mathbb{H}^1$ (see \cite{Montilincei} for further details), even
when we fix a non-trivial horizontal vector $\nn(x)\in\HH_x$,
where\footnote{According to the notation used in \cite{AF, AF2},
in this introduction we adopt the following convention: every
point $x\in \mathbb{H}^1$ is given, using exponential coordinates,
by the triple $x\equiv[x_1, y_1, t_1]$. Moreover, we use, as a
vector basis $\{X, Y\}$ for the horizontal space $\HH$, the
following vectors field: $X(x):=\partial_{x_1} + 2
y_1\partial_{t_1}$, $Y(x):=\partial_{y_1} - 2 x_1\partial_{t_1}$.
This convention for $\mathbb{H}^1$  will be slightly changed in
the sequel.} $\HH={\rm span}_\R\{X, Y\}$, infinite different
CC-geodesics starting from $x\in \mathbb{H}^1$ exist with the same
initial velocity $\nn(x).$ In \cite{AF} this problem was solved by
remarking that a path that locally parameterizes the metric normal
of a smooth surface S at $x\in S$ (whenever $X$ is
non-characteristic) can be selected just by considering two
intrinsic object associated with the surface $S$ at $x$, i.e. the
intrinsic unit normal vector $\nn(x)$ and the so-called
``imaginary curvature'' of $S$ at $x$; see \cite{AF}. Roughly
saying, if $S=\{y\in \mathbb{H}^1: f(y)=0\},$ with $f\in
C^2(\mathbb{H}^1)$ and $x\equiv[x_1, y_1, t_1]\in S$ is
non-characteristic, i.e. $X
f(x)\equiv\partial_{x_1}f(x)+2y_1\partial_{t_1}f(x)\neq 0$ or $Y
f(x)\equiv\partial_{y_1}f(x)-2x_1\partial_{t_1}f(x)\neq 0,$ where
$x\equiv[x_1,y_1,t_1],$ then
$$\nn(x)=\frac{Xf(x)X(x)+Yf(x)Y(x)}{\sqrt{(Xf(x))^2+(Yf(x))^2}}
,\:\:\varpi(x)=\frac{[X,Y]f(x)}{\sqrt{(Xf(x))^2+(Yf(x))^2}},$$
(which are, respectively, the intrinsic unit normal along $S$ at
$x$ and the imaginary curvature of $S$ at $x$) select an
horizontal path that is a subset of the metric normal
$\gamma_\mathcal{N}$ to $S$ at $x$.

The problem we trait in this paper concerns the generalization of
this program to any Carnot group. Later on we shall describe our
approach.

Assume that $S$ is the boundary of an open bounded subset $\Omega$
of a Carnot group $\mathbb{G}.$ Now let us assume that the set
$\Omega$ satisfies the so-called property of the internal ball.
Namely, any point $x\in S$ can be touched by a closed metric ball
$B(y,r)\subset \Omega$ and $\partial B(y,r)\cap S=\{x\}.$ In this
case $\partial B(y,r)$ and $S$ have the same tangent space at $x.$
The objects we are searching for, whenever they exist, are somehow
``hidden'' in the invariants associated with this tangent space.
On the other hand the CC-ball $B(y,r)$ is the union of all the
horizontal paths of length less than $r,$ starting from $y.$  This
flux of CC-geodesics is usually determined by the solutions of a
suitable Hamiltonian system; see Section \ref{onnorm}.

 At this point we have to
select the path $\gamma:[0,r]\longrightarrow\GG$, $\gamma(0)=y$,
$\gamma(r)=x$, connecting $y$ to $x$ and  satisfying the property
$\dc(x,\gamma(t))=\inf _{z\in \partial B(y,r)}\dc(\gamma(t), z).$

We will need two key facts: (i) the CC-distance from a point
satisfies the eikonal equation (see \cite{MoSC}), i.e.
$|\grad\cc\dc|=1$, at each regular point of $\dc$; (ii) even in
this sub-Riemannian structure, a Gauss-type lemma holds (see
Section \ref{GLem}), namely if $\gamma$ is a normal CC-geodesic
leaving the center of the ball, then
$\dot{\gamma}(t)=\grad\cc\dc(\gamma(t)).$  We stress that the last
identity is a straightforward consequence of
 the eikonal equation.

The remaining part of the proof can be described as follows.

Let us introduce a left-invariant frame
$\underline{X}:=\{X_1,\dots,X_\DH, X_{\DH+1},\dots,X_n\}$ for
$\TG$ where $\HH={\rm span}_\R\{X_1,\dots,X_\DH\}$ and $\VV={\rm
span}_\R\{X_{\DH+1},\dots,X_n\}.$ Moreover we fix a Riemannian
metric $g(\cdot, \cdot):=\langle\cdot, \cdot\rangle$ on $\TG$ that
makes orthonormal the frame $\underline{X}$. We shall assume that
the restriction of $g$ to $\HH$ equals the sub-Riemannnian metric
$g\cc$, i.e. $g|_\HH=g\cc$.

Set $P_l(t):=(X_l\dc)(\gamma(t))\,\,(l=1,...,n),$ where
$\dc(\cdot):=\dc(\cdot,y)$.

 In particular, one has
$$
\dot{P}_l(t)=\langle\grad\,
(X_l\dc)(\gamma(t)),\dot{\gamma}(t)\rangle\qquad(l=1,...,n).
$$
By the Gauss' lemma (i.e.
$\dot{\gamma}(t)=\grad\cc\dc(\gamma(t))$), we get
$$
\dot{P}_l(t)=\sum_{j=1}^{\DH} (X_jX_l)(\dc(\gamma(t)))P_j(t).
$$
On the other hand $X_jX_l=[X_j,X_l]+X_lX_j,$ thus
$$
\dot{P}_l(t)=\sum_{j=1}^{\DH}
(X_lX_j)(\dc(\gamma(t)))P_j(t)+\sum_{j=1}^{\DH}
[X_j,X_l](\dc(\gamma(t)))P_j(t).
$$
 Here plays a role the eikonal equation, namely $\sum_{j=1}^{\DH} (X_j\dc)^2(\gamma(t))=1,$ because
\begin{eqnarray*}
\dot{P}_l(t)&=&\frac{1}{2}X_l\left(\sum_{j=1}^{\DH}
(X_j\dc)^2(\gamma(t))P_j(t)\right)+\sum_{j=1}^{\DH}
[X_j,X_l](\dc(\gamma(t)))P_j(t)\\&=&\sum_{j=1}^{\DH}
[X_j,X_l](\dc(\gamma(t)))P_j(t). \end{eqnarray*}It turns out that
$[X_j,X_l](\dc(\gamma(t)))=\sum_{\alpha=\DH+1}^{n}\SC_{j l}^\alpha
X_\alpha(\dc(\gamma(t))),$ where the coefficients $\SC_{j
l}^\alpha$ are the {\it structural constants} of the Lie algebra
$\gg$; see Section \ref{prelcar}.

This way,  we obtain the following system of O.D.E.'s:
\begin{equation}\label{systemprime}
\dot{P}_l(t)=\sum_{j=1}^{\DH}\sum_{\alpha=\DH+1}^n
C^\alpha_{jl}\,P_j(t)P_\alpha(t)\qquad (l=1,...,n).
\end{equation}
By using the properties of the Carnot structural constants (in
particular, see \eqref{chypc}), this system can be solved step by
step. A similar algorithm will be discussed in the Appendix of
Section \ref{vipo}.

The previous discussion, for which we refer the reader to Section
\ref{GLem}, yields the next:

\begin{lemma}Let $\gamma:[0,r]\longrightarrow\GG\,(r>0)$ be any
normal CC-geodesic \footnote{See Section \ref{onnorm} for a
precise definition of {\it normal CC-geodesic}.} of unit-speed and
parameterized by arc-length. Let $\gamma(0)=y$, $P(0)={P}_0$ be
its initial data and set $\dc(x)=\dc(x, y)\,(x\in\GG)$. Then we
have\begin{itemize}\item[(i)]$\dg\dc(\gamma(t))=P\cc(t)$ for every
$t\in[0,r]$;\item[(ii)]$\grad\vv\dc(\gamma(t))=P\vv(t)$ for every
$t\in[0,r]$;\item[(iii)]$\dot{P}_l(t)=\sum_{j=1}^{\DH}\sum_{\alpha=\DH+1}^n
C^\alpha_{jl}\,P_j(t)P_\alpha(t)$ for every $t\in[0,r]\qquad
(l=1,...,n)$.\end{itemize}
\end{lemma}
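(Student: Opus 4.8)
The plan is to follow exactly the derivation sketched in the paragraphs preceding the statement, organizing it around two ingredients that are available to us: the eikonal equation $|\grad\cc\dc|=1$ at regular points of $\dc$ (from \cite{MoSC}), and the sub-Riemannian Gauss Lemma of Section \ref{GLem}, which gives $\dot{\gamma}(t)=\grad\cc\dc(\gamma(t))$ for a normal CC-geodesic $\gamma$ issuing from $y$. First I would fix the left-invariant orthonormal frame $\underline{X}=\{X_1,\dots,X_\DH,X_{\DH+1},\dots,X_n\}$ adapted to the stratification $\gg=\HH\oplus\VV$, together with the Riemannian metric $g=\langle\cdot,\cdot\rangle$ making $\underline X$ orthonormal and restricting to $g\cc$ on $\HH$. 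Setting $\dc(\cdot)=\dc(\cdot,y)$ and $P_l(t):=(X_l\dc)(\gamma(t))$ for $l=1,\dots,n$, statements (i) and (ii) are then immediate: by definition $P\cc(t)=\sum_{j=1}^{\DH}P_j(t)X_j$ is the $\HH$-component of $\grad\dc(\gamma(t))$, which is $\dg\dc(\gamma(t))$, and likewise $P\vv(t)=\sum_{\alpha=\DH+1}^{n}P_\alpha(t)X_\alpha$ is $\grad\vv\dc(\gamma(t))$.

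For (iii), I would differentiate $P_l(t)=(X_l\dc)(\gamma(t))$ along $\gamma$, obtaining $\dot P_l(t)=\langle\grad(X_l\dc)(\gamma(t)),\dot\gamma(t)\rangle$. Substituting the Gauss Lemma identity $\dot\gamma(t)=\grad\cc\dc(\gamma(t))=\sum_{j=1}^{\DH}P_j(t)X_j(\gamma(t))$ gives $\dot P_l(t)=\sum_{j=1}^{\DH}(X_jX_l\dc)(\gamma(t))P_j(t)$. Now I would write $X_jX_l=X_lX_j+[X_j,X_l]$ and split the sum; the term $\sum_{j=1}^{\DH}(X_lX_j\dc)(\gamma(t))P_j(t)$ is recognized as $\tfrac12 X_l\bigl(\sum_{j=1}^{\DH}(X_j\dc)^2\bigr)$ evaluated along $\gamma$, which vanishes identically because the eikonal equation $\sum_{j=1}^{\DH}(X_j\dc)^2\equiv1$ holds on a neighborhood of the (regular) trajectory, hence so does the $X_l$-derivative of the constant $1$. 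What survives is $\dot P_l(t)=\sum_{j=1}^{\DH}[X_j,X_l]\dc(\gamma(t))\,P_j(t)$. Finally, since $[X_j,X_l]=\sum_{\alpha=\DH+1}^{n}\SC_{jl}^{\alpha}X_\alpha$ by the grading of the Carnot Lie algebra (brackets of two horizontal left-invariant fields are vertical, with the structural constants $\SC_{jl}^\alpha$; see Section \ref{prelcar}), we get $[X_j,X_l]\dc(\gamma(t))=\sum_{\alpha=\DH+1}^{n}\SC_{jl}^{\alpha}P_\alpha(t)$, and substituting yields precisely $\dot P_l(t)=\sum_{j=1}^{\DH}\sum_{\alpha=\DH+1}^{n}\SC_{jl}^{\alpha}P_j(t)P_\alpha(t)$, which is \eqref{systemprime}.

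The only genuinely delicate point is the justification of the eikonal step along $\gamma$: one must know that $\dc$ is smooth (at least $C^2$) in a neighborhood of the trajectory $\gamma((0,r])$, so that the iterated derivatives $X_jX_l\dc$ make classical sense and $\sum_{j=1}^{\DH}(X_j\dc)^2\equiv1$ can be differentiated. This is where the normality and unit-speed-by-arc-length hypotheses on $\gamma$ enter: a normal CC-geodesic leaving the center $y$ stays, for small $t$, in the region where $\dc(\cdot,y)$ is as regular as desired (no conjugate points, no cut locus), so the manipulations are licit there; for the endpoint $t=0$ the identities in (i)--(iii) extend by continuity of $P$ and of the right-hand side of (iii). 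The remaining computations are the routine frame calculus indicated above, and they require nothing beyond the Leibniz rule, the definition of the structural constants, and the stratification $[\HH,\HH]\subseteq\VV$.
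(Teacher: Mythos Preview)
Your proposal is correct and follows essentially the same route as the paper: with $P_l(t):=(X_l\dc)(\gamma(t))$, items (i)--(ii) are definitional (the identity $\dot\gamma=\dg\dc$ coming from the eikonal equation plus Cauchy--Schwarz, exactly as in Section~\ref{GLem}), and (iii) is obtained by differentiating along $\gamma$, writing $X_jX_l=X_lX_j+[X_j,X_l]$, killing the symmetric part via $X_l\big(\sum_j(X_j\dc)^2\big)=X_l(1)=0$, and expanding the bracket in structural constants. One small phrasing issue: when you justify $[X_j,X_l]=\sum_{\alpha>\DH}\SC_{jl}^\alpha X_\alpha$ by saying ``brackets of two horizontal left-invariant fields are vertical'', note that $l$ runs over all of $\{1,\dots,n\}$, so $X_l$ need not be horizontal; the correct reason is the stratification property $[\HH_1,\HH_i]\subset\HH_{i+1}\subset\VV$ for every $i$, which still forces the bracket to have no horizontal component.
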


This lemma solves the problem of selecting, for any regular point
$x$ belonging to the CC-sphere $\mathbb{S}^n\sr(y,r):=\partial
B(y, r)$, the unique normal CC-geodesic having velocity vector
equal to the horizontal normal direction at that point (i.e.
$P\cc(0)=\nn(x)$) and connecting this point to the center $y$ of
the CC-sphere. In fact, by uniqueness of solutions of O.D.E.'s
systems, one gets that the desired curve must be the normal
CC-geodesic defined by
$$\gamma(t):=\exp\sr(y,-\mathcal{N}(x))(t)\qquad
t\in[0,r].$$Here we have set
$$\mathcal{N}:=\frac{\nu}{|\PH\nu|}=(\nn,\varpi),$$ where $\nu$ denotes the Riemannian unit normal along the CC-sphere
$\mathbb{S}^n\sr(y,r)$, thus $\nn$ is just the horizontal unit
normal along $\mathbb{S}^n\sr(y,r)$ and
$\varpi=\frac{\P\vv\nu}{|\PH\nu|}$. Moreover, $\exp\sr$ denote the
sub-Riemannian exponential map; see Section \ref{ipoo}.

An immediate ``geometric'' consequence can be given:
\begin{corollario}
Let $S=\{x\in\mathbb{G}:\:\:f(x)=0 \},$ where $f$ is a $C^2$
function. Assume that there exists a CC-ball $B(y,r)$ with center
$y$ and radius $r,$ such that $B(y,r)\subset \{f(x)<0 \}$, or
$B(y,r)\subset \{f(x)>0 \}$, and ${B(y,r)}\cap S=\{x\}$, where
$x\in S$ is non-characteristic. Then there exists the metric
normal $\gamma_{\mathcal{N}}$ to $S$ at $x$ and  for every
$t\in[0,r],$ $\gamma(t)\in \gamma_{\mathcal{N}},$ where
$$\gamma(t):=\exp\sr(y,-\mathcal{N}(x))(t)\qquad
t\in[0,r].$$\end{corollario}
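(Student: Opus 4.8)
The plan is to deduce the corollary directly from the Lemma together with the Gauss-type Lemma and the eikonal equation already established. First I would set up the geometric configuration: since $x\in S$ is non-characteristic and the ball $B(y,r)$ touches $S$ only at $x$ with $\partial B(y,r)$ and $S$ sharing the same tangent space there, the Riemannian unit normal $\nu$ to $\partial B(y,r)$ at $x$ coincides (up to sign) with the Riemannian unit normal to $S$ at $x$; hence the vertical vector $\mathcal{N}(x)=\nu/|\PH\nu|=(\nn,\varpi)$ is well-defined (the denominator is nonzero precisely by non-characteristicity) and is an intrinsic object attached to $S$ at $x$. I would then invoke the discussion preceding the Lemma: the function $\delta\cc(\cdot)=\dc(\cdot,y)$ is smooth near $\partial B(y,r)$ away from $y$, satisfies the eikonal equation $|\grad\cc\delta\cc|=1$, and by the Gauss Lemma the normal CC-geodesic $\gamma$ leaving $y$ and reaching $x$ satisfies $\dot\gamma(t)=\grad\cc\delta\cc(\gamma(t))$.

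Next I would identify this geodesic explicitly. By part (i) of the Lemma, $\grad\cc\delta\cc(\gamma(t))=P\cc(t)$, so at $t=r$ we get $\dot\gamma(r)=P\cc(r)=\grad\cc\delta\cc(x)$, which is the inward horizontal unit normal to $\partial B(y,r)$ at $x$, i.e. $-\nn(x)$ with the orientation convention making $\nu$ point out of the ball (equivalently out of $\Omega$ when $B(y,r)\subset\{f<0\}$, with the sign of $\nu$ adjusted in the other case). Simultaneously, part (ii) gives $\grad\vv\delta\cc(x)=P\vv(r)$, and one checks from the relation between the Riemannian gradient of $\delta\cc$ and the normal $\nu$ to the level set $\{\delta\cc=r\}=\partial B(y,r)$ that $P\vv(r)/|P\cc(r)|=-\varpi(x)$ under the same orientation. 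Thus the full datum $P(r)$ of the normal CC-geodesic at $x$ equals $-\mathcal{N}(x)$ up to the horizontal normalization, and running the geodesic backwards from $x$ — equivalently, by uniqueness of solutions of the ODE system in part (iii) of the Lemma, running it forward from $y$ — we obtain exactly $\gamma(t)=\exp\sr(y,-\mathcal{N}(x))(t)$ for $t\in[0,r]$.

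Finally I would verify the defining property of the metric normal. For $t\in[0,r]$ and any $z\in\partial B(y,r)$, the reverse triangle inequality gives $\dc(\gamma(t),z)\ge \dc(y,z)-\dc(y,\gamma(t)) = r - t$, while $\dc(\gamma(t),x)=\dc(y,x)-\dc(y,\gamma(t))=r-t$ because $\gamma$ is a unit-speed geodesic through $y$ and $x$ lies on it at parameter $r$; hence $\dc(\gamma(t),\partial B(y,r))=r-t=\dc(\gamma(t),x)$. Since $B(y,r)\subset\{f<0\}$ (or $\{f>0\}$) and $\overline{B(y,r)}\cap S=\{x\}$, every point of $S$ other than $x$ lies outside $\overline{B(y,r)}$, and for such a point the distance from $\gamma(t)$ is $\ge \dc(\gamma(t),\partial B(y,r))=\dc(\gamma(t),x)$ by a connectedness/continuity argument along a minimizing curve; therefore $\dc(\gamma(t),S)=\dc(\gamma(t),x)=\dc(x,\gamma(t))$, which is precisely the condition $\gamma(t)\in\gamma_{\mathcal N}$. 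Existence of the metric normal then follows, and $\gamma_{\mathcal N}\supseteq\{\gamma(t):t\in[0,r]\}$.

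I expect the main obstacle to be the sign/orientation bookkeeping in matching $-\mathcal{N}(x)$ to the terminal data $P(r)$ of the geodesic — in particular making precise that the Riemannian gradient of $\delta\cc$ at $x$, decomposed via $\P\cc$ and $\P\vv$, reproduces both $\nn(x)$ and $\varpi(x)$ with consistent signs, independently of whether the ball lies in $\{f<0\}$ or $\{f>0\}$ — and secondarily the careful justification that a point of $S\setminus\{x\}$, lying strictly outside the closed ball, is genuinely farther from $\gamma(t)$ than $x$ is (this uses that any curve from $\gamma(t)$ to such a point must cross $\partial B(y,r)$, together with the additivity of CC-length along the radial geodesic). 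Everything else is a direct transcription of the Gauss Lemma, the eikonal equation, the Lemma, and uniqueness for the ODE system \eqref{systemprime}.
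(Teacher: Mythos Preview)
Your proposal is correct and follows essentially the same approach as the paper. The paper does not give an explicit proof of this corollary; it is stated as an immediate consequence of Proposition~\ref{PODER} (the sub-Riemannian Gauss Lemma) and the preceding Remark, which is precisely the combination of ingredients you invoke: the eikonal equation, the identification $\dot\gamma=\grad\cc\dc$ along the radial geodesic, parts (i)--(iii) of Lemma~\ref{LEM1}, and uniqueness for the ODE system to pin down $\gamma$ via its terminal data $\mathcal{N}(x)$ at $x$. Your verification of the metric-normal property via the reverse triangle inequality is a detail the paper leaves implicit, and your flagged concern about sign/orientation bookkeeping is exactly the only delicate point in the argument.
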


Till now we have described some of the main results of this paper,
but there are many others aspects  and related questions. Here we
briefly give an account of the rest of the paper.

In Section \ref{prelcar} we shall introduce the main ingredients
to deal with Carnot groups and their sub-Riemannian structures:
Lie algebraic preliminaries, Carnot dilations, CC-distance,
properties of the Carnot structural constants and the notion of
curvature of a distribution. Moreover we shall discuss some other
tools  as,  for instance, the Levi-Civita connection, the
so-called $\HH$-connection (or horizontal connection and, the
related notion of $E$-connection) and covariant differentiation
along curves. We also give some basic examples.

In Section \ref{dere} we will just recall notation and some basic
definitions to work with hypersurfaces of Carnot groups, such as
the notion of  characteristic point and that of horizontal
perimeter.

In Section \ref{vipo} we begin our study of Carnot-Carath\'eodory
metrics and their geodesics.

In Section \ref{onnorm} we will introduce CC-geodesics from the
so-called {\it Hamiltonian} point of view, well-known in
literature; see \cite{Bellaiche}, \cite{Montgomery}, \cite{GoKA}.
We will give the equations for normal and abnormal curves (and
minimizers) and discuss some examples. Here we just remind the
so-called {\it normal CC-geodesic equations}:

\begin{eqnarray*}\mbox{(Normal Equations)}\left\{\begin{array}{ll}\dot{x}
=P\cc\\\dot{P}=- C(P\vv) P\cc,\end{array}\right.\end{eqnarray*}
where $P=(P\cc, P\vv)$ is the $n$-vector of the {\it momentum
functions} associated with a fixed orthonormal (left-invariant)
moving frame $\underline{X}=\{X_1,...,X_n\}$ for $\TG$.
Furthermore, $C(P\vv)\in\mathcal{M}_n$ is a linear combination of
$n\times n$ constant matrices which only depends on the Carnot
structural constants. More precisely, one
has$$C(P\vv):=\sum_{\alpha=h+1}^n P_\alpha C^\alpha,$$where
$C^\alpha:=[C^\alpha_{ij}]_{i, j=1,...,n}\,\,(\alpha=h+1,...,n)$
and, by definition, $C^\alpha_{ij}:=\langle[X_i, X_j],
X_\alpha\rangle$.\\

In Section \ref{ipo} we shall discuss another ``natural'' point of
view on this subject: the {\it Lagrangian} one. This is because it
seems the more natural way to obtain some additional information
about normal CC-geodesics and their minimizing features. In
particular, we will derive both the first and the second variation
of the natural sub-Riemannian Lagrangian. For precise statements
see Proposition \ref{1varg} and Theorem \ref{sevate}. Then, after
introducing the notion of {\it CC-geodesic variation}, we shall
prove the validity of another interesting second variation
formula; see Corollary \ref{2sevate}. Starting from these
formulae, we will deduce the natural {\it Jacobi equations} for
normal CC-geodesics; see Definition \ref{JE}. Furthermore, we will
discuss another Jacobi-type system of O.D.E.'s, which is obtained
under suitable assumptions on the variations. These ``restricted''
Jacobi-type equations read as follows:
\[\nabla_t^{(2)}{Y}+\RC( P\cc,Y)P\cc + C(P\vv)\nabla_tY=0,\]where
$\nabla_t$ denotes ``covariant differentiation''  and  $\RC$
denotes the Riemannian curvature tensor. This material can be
useful in the study of the conjugate and cut loci of a point, in
this context.

In Section \ref{ipoo} we shall define the {\it sub-Riemannian
exponential map} $\exp\sr$ and we will show some of their basic
features.

In Section \ref{GLem} we will prove a sub-Riemannian version of
the Gauss Lemma; see Proposition \ref{PODER} and Corollary
\ref{important}.

In Section \ref{2-stepgeod} we will compute the sub-Riemannian
exponential map $\exp\sr$ for the special case of
 $2$-step Carnot groups. Remind that the
sub-Riemannian exponential map based at the point $x_0\in\GG$ is a
mapping
$$\exp\sr(x_0,\cdot)(\cdot):\UH\times\HH_2\times\R\longrightarrow\GG,$$where
$\UH=\{X\in\HH: |X|=1\}$ denotes the bundle of all horizontal unit
vectors. We will show that
\begin{eqnarray*}\label{explicit}\exp\sr(x_0,P_0)(t):=x_0 + \int_0^te^{-C\cc(P\cd)s}P\cc(0)\,d
s-\frac{1}{2} \sum_{\alpha=\DH+1}^n\bigg(\int_{0}^t \langle
C^\alpha\cc
x\cc,\dot{x}\cc\rangle\,ds\bigg)\,\ee_\alpha,\end{eqnarray*}where
$P_0=(P\cc(0), P\cd)\in\UH\times\HH_2$,
$C\cc^\alpha:=[C^\alpha_{ij}]_{i,
j=1,...,\DH}\in\mathcal{M}_{\DH}\,\,(\alpha=h+1,...,n)$ and
$C\cc(P\cd):=\sum_{\alpha=\DH+1}^nP_\alpha C\cc^\alpha$. Moreover
\[x\cc(t):=x\cc(0) +
\int_0^te^{-C\cc(P\cd)s}P\cc(0)\,d s.\]In the previous formula we
have used the notation for the exponential of a linear operator.
More precisely, $e^{-C\cc(P\cd)s}$ denotes the exponential of a
square-matrix, i.e. the $\DH\times\DH$-matrix defined by:
$$e^{-C\cc(P\cd)s}:={\rm Id}_\DH -C\cc(P\cd)s +
\frac{[C\cc(P\cd)s]^2}{2!}-\frac{[C\cc(P\cd)s]^3}{3!}+\ldots.$$We
shall analyze the existence of $T$-periodic solutions for the
``auxiliary'' horizontal path $x\cc(t)=(x_1(t),...,x_{\DH}(t))$
previously defined. Indeed, the $T$-periodicity  of $x\cc(t)$ is
somehow connected with the study of the conjugate and cut loci of
a point; see Remark \ref{k}.

In Section \ref{appe} we will show how, at least in principle,the
system of normal CC-geodesic equations can be integrated step by
step.

In Section \ref{us} we will apply some of the tools previously
developed toward the study of the CC-distance function $\delta\cc$
from a $\cont^k$-smooth $(k\geq 2)$ hypersurface $S$, i.e.
\[\delta\cc(x):=\inf_{y\in S}\dc(x,y).\] This will be done only for
$2$-step Carnot groups, by using  the explicit structure of
$\exp\sr$ in this case. More precisely, we shall define a mapping
$\Phi:S\times]-\epsilon,
\epsilon[\longrightarrow\GG\,(\epsilon>0)$ by
\[\Phi(y,t):=\exp\sr(y,\,\mathcal{N}(y))(t),\]and then we will compute its Jacobian; see Lemma
\ref{CINV}. To be more precise, let us fix Riemannian normal
coordinates $(u_1,...,u_{n-1})$ around $y_0\in S$. Then
$\mathcal{V}(y):=\frac{\partial y}{\partial
u_1}\wedge...\wedge\frac{\partial y}{\partial u_{n-1}}$ is a
normal (non-unit) vector along $S$, in a neighborhood of $y_0\in
S$ and it turns out that
\[\big|\det\big[\mathcal{J}_{(y,0)}\Phi\big]\big|=|\PH\mathcal{V}|,\]where
$\mathcal{J}_{(y,0)}\Phi$ denotes the Jacobian matrix operator at
$(y,0)\in S\times ]-\epsilon, \epsilon[$. Therefore, out of the
characteristic set $C_S$, the map $\Phi$ turns out to be
invertible.

Finally, we shall prove the following (see Theorem \ref{ccdreg}):
\begin{teo}Let $\GG$ be a $2$-step Carnot group. Let $S\subset\GG$ be
 a $\cont^k$-smooth hypersurface with $k\geq 2$ and let
$\delta\cc$ denote the CC-distance function for $S$. Set $S_0:=S\setminus C_S$,
where $C_S$ denote the characteristic set of $S$. Then, for every
open set $U_0$ compactly contained in $S_0$, there exists a
neighborhood $U\subset\GG$ of $U_0$ having the {\bf unique nearest
point property}, with respect to the CC-distance. Furthermore, the
CC-distance function from $U_0\cap S$ is $\delta\cc|_{U\setminus
U_0}$ is a $\cont^k$-smooth function.
\end{teo}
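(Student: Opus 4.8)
The plan is to localize the problem around a non-characteristic point and then exploit the explicit form of the sub-Riemannian exponential map for $2$-step groups together with Lemma \ref{CINV}. First I would fix $y_0 \in U_0 \subset\subset S_0$ and work in Riemannian normal coordinates $(u_1,\dots,u_{n-1})$ on a coordinate patch of $S$ around $y_0$; since $S$ is $\cont^k$ with $k\ge 2$, the defining data of $S$ (and hence the normal frame $\mathcal{V}(y)$, the horizontal normal $\nn(y)$, and the vertical parameter $\varpi(y)$, i.e. the vector $\mathcal{N}(y) = \nu/|\PH\nu|$) are $\cont^{k-1}$ in these coordinates, because $\mathcal{N}$ is built from first derivatives of a $\cont^k$ defining function via the formulas recalled in the introduction. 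I would then consider the map $\Phi(y,t) := \exp\sr(y,\mathcal{N}(y))(t)$ on $S\times\,]-\epsilon,\epsilon[$. The explicit formula for $\exp\sr$ in the $2$-step case (the displayed formula for $\exp\sr(x_0,P_0)(t)$ involving $\int_0^t e^{-C\cc(P\cd)s}P\cc(0)\,ds$ and the vertical correction terms) shows that $\Phi$ depends on $(y,t)$ as smoothly as its data: it is $\cont^{k-1}$ jointly, and $\cont^\infty$ in $t$. This regularity is the first ingredient.

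The second step is invertibility. By Lemma \ref{CINV} we have $\big|\det[\mathcal{J}_{(y,0)}\Phi]\big| = |\PH\mathcal{V}(y)|$, and on $S_0 = S\setminus C_S$ the horizontal projection of the (nonzero) Riemannian normal is nonzero, so this determinant is bounded away from zero on the compact set $\overline{U_0}$. Hence the inverse function theorem (in the $\cont^{k-1}$ category, $k-1\ge 1$) gives, after possibly shrinking $\epsilon$, that $\Phi$ is a $\cont^{k-1}$ diffeomorphism from a neighborhood of $\overline{U_0}\times\{0\}$ in $S\times\,]-\epsilon,\epsilon[$ onto an open neighborhood $U\subset\GG$ of $U_0$; by a standard compactness/uniformity argument one can choose a single $\epsilon>0$ and a single tube-like $U$ working for all of $U_0$ at once. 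Write $(y(x),t(x))$ for the inverse, with $y$ and $t$ of class $\cont^{k-1}$ on $U$.

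The third and crucial step is to identify, for $x = \Phi(y,t)$ with $t>0$ (and symmetrically $t<0$), the quantity $|t|$ with $\delta\cc(x) = \dc(x, U_0\cap S)$, i.e. to prove the \textbf{unique nearest point property}. Here I would invoke the Gauss Lemma machinery of Section \ref{GLem}: the curve $s\mapsto \exp\sr(y,\mathcal{N}(y))(s)$ is a unit-speed normal CC-geodesic leaving $S$ orthogonally (in the sub-Riemannian sense) at $y$, so $\dc(x,y) = |t|$ and, by Corollary \ref{important} / the discussion yielding the metric normal, the geodesic from $y$ to $x$ realizes the distance from $x$ to $S$ \emph{provided} $x$ lies on the metric normal $\gamma_\mathcal{N}$ at $y$. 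The content to check is that for $x\in U$ no point of $U_0\cap S$ other than $y(x)$ is as close to $x$, and that no shortcut leaves the tube $U$. This is a first-variation argument: if $z\in U_0\cap S$ minimized $\dc(\cdot,x)$, then the minimizing geodesic from $z$ to $x$ must meet $S$ orthogonally at $z$ (else a variation along $S$ decreases the distance, using non-characteristicity so that horizontal tangent directions to $S$ exist), hence $z = y(z')$ for the corresponding parameter and, by injectivity of $\Phi$ on the tube together with the eikonal equation $|\grad\cc\delta\cc| = 1$ controlling $\delta\cc$ along such geodesics, $z = y(x)$ and the distance equals $|t(x)|$. Thus $\delta\cc(x) = |t(x)|$ on $U\setminus U_0$ — once we also know, via the tube construction, that competing nearest points cannot escape through $\partial U$, which follows by taking $\epsilon$ small relative to the distance from $U_0$ to $\partial U_0$ in $S$.

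Finally, regularity of $\delta\cc$ is read off: $\delta\cc|_{U\setminus U_0} = |t(x)|$ where $t$ is the last coordinate of $\Phi^{-1}$, hence $\cont^{k-1}$ a priori; but one gains one derivative exactly as in the Riemannian case, because $t$ solves the eikonal equation $|\grad\cc\delta\cc|^2 = \sum_{j=1}^\DH (X_j\delta\cc)^2 = 1$ with $\delta\cc$ constant ($=0$) on the $\cont^k$ hypersurface $S$ as boundary data, and the characteristics of this equation are precisely the normal CC-geodesics $\Phi(y,\cdot)$; bootstrapping through the transport equations for $P_l = X_l\delta\cc$ in Lemma (the ODE system $\dot P_l = \sum_{j,\alpha} C^\alpha_{jl} P_j P_\alpha$) with $\cont^{k-1}$ initial data along a $\cont^k$ surface yields $\delta\cc\in\cont^k$ on $U\setminus U_0$. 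I expect the main obstacle to be this last point together with the uniqueness-of-nearest-point argument: one must rule out nearest points that are characteristic or that lie outside the coordinate patch, and one must carry out the first-variation computation on $S$ in the sub-Riemannian setting carefully (using that $\nn(z)\ne 0$ to produce admissible competitor curves), rather than the smoothness bookkeeping, which is routine given the explicit $2$-step formula for $\exp\sr$ and Lemma \ref{CINV}.
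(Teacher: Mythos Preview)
Your setup and use of Lemma \ref{CINV} for local invertibility of $\Phi$ match the paper exactly, and your discussion of the unique nearest point property is actually more careful than the paper's (which simply asserts it after Remark \ref{inversion}).

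The genuine divergence is in the regularity upgrade from $\cont^{k-1}$ to $\cont^k$. You propose an eikonal/transport bootstrapping argument along the characteristics. The paper instead computes $\grad\,\delta\cc(x)$ directly by the chain rule applied to $\delta\cc(x)=\dc(x,\Psi_S(x))$: writing $y=\Psi_S(x)$ and $t=\dc(x,y)$, one gets
\[
\langle\grad\,\delta\cc(x),X\rangle
=\big\langle\mathcal{N}_{y,t}(x),X\big\rangle
+\big\langle[\mathcal{J}_x\Psi_S(x)]X,\,\mathcal{N}_{x,t}(y)\big\rangle,
\]
and the \emph{second term vanishes} because $[\mathcal{J}_x\Psi_S(x)]X\in\TT_yS$ while $\mathcal{N}_{x,t}(y)$ is (a multiple of) the Riemannian normal to $S$ at $y$. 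This orthogonality kills the only term that would involve a derivative of $\Psi_S$, leaving $\grad\,\delta\cc(x)=\mathcal{N}_{y,t}(x)=(e^{-C\cc(\varpi(y))t}\nn(y),\varpi(y))$, which is visibly $\cont^{k-1}$ in $x$ since $\nn,\varpi$ are $\cont^{k-1}$ on $S$ and $(y,t)=\Phi^{-1}(x)$ is $\cont^{k-1}$. Hence $\delta\cc\in\cont^k$.

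Your transport argument reaches the same conclusion (indeed, the identity $X_l\delta\cc(\Phi(y,t))=P_l(y,t)$ from the Gauss Lemma \emph{is} the formula above in disguise), but the paper's route makes the mechanism for the gain of one derivative completely transparent: it is the tangential/normal orthogonality that eliminates the dangerous $\mathcal{J}_x\Psi_S$ term, not a PDE bootstrap. Your version is correct but would benefit from stating this cancellation explicitly rather than invoking ``bootstrapping,'' which as written leaves the reader to reconstruct why transverse regularity is not lost.
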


The proof of this result is based on explicit computations and on
the sub-Riemannian Gauss Lemma (see Proposition
\ref{PODER}).\\

\subsection{Sub-Riemannian Geometry of Carnot
groups}\label{prelcar} In this section, we will introduce the
definitions and the main features concerning the sub-Riemannian
geometry of Carnot groups. References for this subject are, for
instance, \cite{CDG}, \cite{GE1}, \cite{GN}, \cite{Gr1, Gr2},
\cite{Mag, Mag2}, \cite{Mi}, \cite{Montgomery}, \cite{P2, P4},
\cite{Stric}. First, let us consider a ${\cin}$-smooth connected
$n$-dimensional manifold $N$ and let $\HH\subset \TT N$ be a
$h_1$-dimensional smooth subbundle of $\TT N$. For any $p\in N$,
let $\TT^{k}_p$ denote the vector subspace of $\TT_p N$ spanned by
a local basis of smooth vector fields $X_{1}(p),...,X_{h_1}(p)$
for $\HH$ around $p$, together with all commutators of these
vector fields of order $\leq k$. The subbundle $\HH$ is called
{\it generic} if for all $p\in N$ $\dim \TT^{k}_p$ is independent
of the point $p$ and {\it horizontal} if $\TT^{k}_p = \TT N$ for
some $k\in \N$. The pair $(N,\HH)$ is a {\it $k$-step CC-space} if
is generic and horizontal and if $k:=\inf\{r: \TT^{r}_p = \TT N
\}$. In this case, we have that
\begin{equation}\label{filtration}0=\TT^{0}\subset
\HH=\TT^{\,1}\subset\TT^{2}\subset...\subset \TT^{k}=\TT
N\end{equation} is a strictly increasing filtration of smooth
subbundles of constant dimensions $n_i:=\dim \TT^{i}
 \\(i=1,...,{k}).$ Setting $(\HH_i)_p:=\TT^i_p\setminus \TT^{i-1}_p,$
then $\grr(\TT_p N):=\oplus_{i=1}^k (\HH_k)_p$ is the associated
{\it graded Lie algebra}, at the point $p\in N$, with Lie product
induced by $[\cdot,\cdot]$. Moreover, we shall set $\DH_i:=\dim
{\HH}_{i}=n_i-n_{i-1}\,(n_0=\DH_0=0)$. The $k$-vector
$(\DH_1,...,\DH_{k})$ is the {\it growth vector} of $\HH$. Notice
that every ${\HH}_{i}$ is a smooth subbundle of the tangent bundle
$\pi:\TT N \longrightarrow N$, i.e. $\pi_{\ci}:
{\HH}_{i}\longrightarrow N$, where
$\pi_{\ci}=\pi_{|{\HH}_{i}}\,(i=1,...,{k})$.

\begin{Defi}\label{eman}We will call {\bf graded frame} $\underline{X}=\{X_1,...,X_n\}$ for $N$,
 any frame for $N$ such that, for any $p\in N$ we have that $\{{X}_{i_j}(p): n_{j-1}<i_j\leq n_j\},$ is a
basis for ${\HH_j}|_p:=\HH_j(p)\,\,(j=1,...,k)$.\end{Defi}

\begin{Defi}\label{dccar} A {\bf sub-Riemannian metric} $g\cc=\langle\cdot,\cdot\rangle\cc$ on $N$ is a
symmetric positive bilinear form on $\HH$. If $(N,\HH)$ is a
{CC}-space, then the {\bf {CC}-distance} $\dc(p,q)$ between $p,\,
q\in N$ is
$$\dc(p,q):=\inf \int\sqrt{\langle
 \dot{\gamma},\dot{\gamma}\rangle\cc} dt,$$
where the infimum is taken over all piecewise-smooth horizontal
paths $\gamma$ joining $p$ to $q$.
\end{Defi}

In fact, Chow's Theorem (see \cite{Gr1}, \cite{Montgomery})
implies that $\dc$ is actually a metric on $N$, since any two
points can be joined with (at least one) horizontal path; moreover
the topology induced by the {CC}-metric turns out to be compatible
with the given topology of $N$.

The general setting introduced above is the starting point of
sub-Riemannian geometry. A nice and very large class of examples
of these geometries is represented by {\it Carnot groups} which
for many reasons play, in sub-Riemannian geometry, an analogous
role to that of Euclidean spaces in Riemannian geometry. Below we
will introduce their main features. For an introduction to the
following topics, we suggest Helgason's book, \cite{Helgason}, and
the survey paper by Milnor, \cite{3}, regarding the geometry of
Lie groups, and Gromov, \cite{Gr1}, Pansu, \cite{P2, P4}, and
Montgomery, \cite{Montgomery}, specifically for sub-Riemannian
geometry.

By definition a $k$-{\it{step Carnot group}}  $(\GG,\bullet)$ is a
$n$-dimensional, connected, simply connected, nilpotent and
stratified Lie group (with respect to\,the group law $\bullet$). This means
that its Lie algebra $\gg\cong\Rn$
satisfies:\begin{equation}\label{alg}
{\mathfrak{g}}={\HH}_1\oplus...\oplus {\HH}_k,\quad
 [{\HH}_1,{\HH}_{i-1}]={\HH}_{i}\quad(i=2,...,k),\,\,\,
 {\HH}_{k+1}=\{0\},\end{equation}
We shall denote by $0$ the identity on $\GG$. Any ${x}\in\GG$
defines smooth maps $L_{x}, R_{x}:\GG\longrightarrow\GG,$ called
left-translation and right-translation, respectively, by $
L_{x}({y}):={x}\bullet {y},$ $ R_{x}({y}):={y}\bullet {x},$ for
any ${y}\in\GG$. Remind that the Lie algebra $\gg$ is naturally
isomorphic to $\TT_0\GG$ by identifying any left-invariant vector
field $X$ with its value at $0$. The isomorphism is explicitly
given by
${L_{x}}_{\ast}:\mathit{T}_0\GG\longrightarrow\mathit{T}_{x}\GG$.
The smooth subbundle
  ${\HH}_1$ of the tangent bundle $\TG$ is said
{\it horizontal}  and henceforth denoted by $\HH$. We will set
${\VV}:={\HH}_2\oplus...\oplus {\HH}_k$ and call ${\VV}$ the {\it
vertical subbundle} of $\TG$. We shall set $\vd:=\dim \VV$. One
has \[\vd:={\rm{codim}} \HH,\qquad n=\DH+\vd.\]As before, we will
assume that $\dim{{\HH}_i}=\DH_i$ $(i=1,...,k)$ and that $\HH$ is
generated by some basis of left-invariant horizontal vector fields
 $\underline{X\cc}:=\{X_1,...,X_{\DH_1}\}$. This one can be completed to a  global
 basis (frame) of left-invariant sections of $\TG$,
 $\underline{X}:=\{X_1,...,X_n\}$,  which is {\it graded} or
 {\it adapted to the stratification}. This can be done by re-labelling the canonical basis $\{\ee_i:i=1,...,n\}$ of
$\gg\cong\Rn$ in a way that it turns out to be adapted to the
stratification and then by setting
$${X_i}(x):={L_x}_\ast\ee_i=\frac{\partial x\bullet y}{\partial y}\Big|_{y=0}\ee_i\qquad(i=1,...,n).$$We shall set
 $n_l:=\DH_1+...+\DH_l$ ($n_0=\DH_0:=0,\,n_k=n$) and
 ${\HH}_l={\mathrm{span}}_\R\big\{X_i:  n_{l-1}< i \leq
 n_{l}\big\}$ $(l=1,...,k)$.

\begin{no}We shall set $I\cc:=\{1,...,\DH_1\}$,
$I\cd:=\{\DH_1+1,...,n_2(=\DH_1+\DH_2)\}$,...,
$I\ck:=\{n_{k-1}+1,...,n_k(=n)\}$, and $I\vv:=\{\DH_1+1,...,n\}$.
Moreover, we will use Latin letters $i, j, k,...,$ for indices
belonging to $I\cc$ and Greek letters
 $\alpha, \beta, \gamma,...,$ for indices belonging to $I\vv$. Unless otherwise specified,
capital Latin letters $I, J,
 K,...,$ may denote any generic index. We also define the
  function $\mathrm{ord}:\{1,...,n\}\longrightarrow\{1,...,k\}$
 by $\mathrm{ord}(I):= i $ if, and only if, $n_{i-1}<I\leq n_{i}$
 $\,(i=1,...,k)$.
\end{no}

If $p\in\GG$ and $X\in\gg$ we set
$\vartheta_{(X,p)}(t):=\esp[tX](p)\, (t\in\R)$, i.e.
$\vartheta_{(X,p)}$ denotes the integral curve of $X$ starting
from $p$ and it is a {\it{1-parameter sub-group}} of $\GG$. The
{\it{Lie group exponential map}} is then defined by
$$\esp:\gg\longmapsto\GG,\quad\esp(X):=\esp[X](1).$$ It turns out
that $\esp$ is an analytic diffeomorphism between $\gg$ and $\GG$
whose inverse will be denoted by $\llog.$ Moreover we have
$$\vartheta_{(X,p)}(t)=p\bullet\esp(t
X)\quad\forall\,\,t\in\R.$$From now on we shall fix on $\GG$ the
so-called {\it exponential coordinates of 1st kind}, i.e. the
coordinates associated to the map $\llog$.

As for any nilpotent Lie group, the {\it Baker-Campbell-Hausdorff
formula} (see \cite{Corvin}) uniquely determines the group
multiplication $\bullet$ of $\GG$, from the ``structure'' of its
own Lie algebra $\gg$. In fact, one has
$$\esp(X)\bullet\esp(Y)=\esp(X\star Y)\,\,(X,\,Y \in\gg),$$ where
${\star}:\gg \times \gg\longrightarrow \gg$ is the
 {\it Baker-Campbell-Hausdorff product} defined by \begin{eqnarray}\label{CBHf}X\star Y= X +
Y+ \frac{1}{2}[X,Y] + \frac{1}{12} [X,[X,Y]] -
 \frac{1}{12} [Y,[X,Y]] + \mbox{ brackets of length} \geq 3.\end{eqnarray}

Using exponential coordinates, \eqref{CBHf} implies that the group
multiplication $\bullet$ of $\GG$ is polynomial and explicitly
computable (see \cite{Corvin}). Moreover, $0=\esp(0,...,0)$ and
the inverse of $x\in\GG$
 $(x=\esp(x_1,...,x_{n}))$ is ${x}^{-1}=\esp(-{x}_1,...,-{x}_{n})$.

\begin{no}\label{Janeiro}Using exponential coordinates for
$\GG$, every point $x=\esp(\sum_I x_I \ee_I)\in\GG$ can be
regarded as $n$-tuple $x=(x_1,...,x_\DH,
x_{\DH+1},...,x_n)\in\R^n$. We shall
set$$x\cc:=(x_1,...,x_\DH)\in\R^\DH,\qquad
x\vv:=(x_{\DH+1},...,x_n)\in\R^\vd \qquad(n=\DH+\vd).$$Hence
$x=\esp(x\cc, x\vv)\equiv(x\cc, x\vv)$.
\end{no}

When we endow the horizontal subbundle with a metric
$g\cc=\langle\cdot,\cdot\rangle\cc$, we say that $\GG$
 has a {\it sub-Riemannian structure}.  Is important to note that
 it is always possible to define a left-invariant Riemannian metric
  $g =\langle\cdot,\cdot\rangle$ in such a way that the
  frame $\underline{X}$ turns out to be {\it
orthonormal} and such that $g_{|\HH}=g\cc$. For this, it is enough
to choose a Euclidean metric on $\gg=\TT_0\GG$ which can be
left-translated to the whole tangent bundle and, by this way, the
direct sum (\ref{alg}) becomes an orthogonal direct sum.

Since for Carnot groups the hypotheses of Chow's Theorem trivially
apply, the {\it Carnot-Carath\'eodory distance} $\dc$ associated
with $g\cc$ can be defined as before, and $\dc$ makes $\GG$ a
complete metric space in which every couple of points can be
joined by (at least) one $\dc$-geodesic; see \cite{Bellaiche},
\cite{Montgomery}.

We remind that Carnot groups are {\it homogeneous groups} (see
\cite{Stein}), i.e. they have a 1-parameter group of automorphisms
$\delta_t:\GG\longrightarrow\GG$ $(t>0)$. Using exponential
coordinates, we have $\delta_t x
=\esp(\sum_{j,i_j}t^j\,x_{i_j}\ee_{i_j})$ for all
$x=\esp(\sum_{j,i_j}x_{i_j}\ee_{i_j})\in\GG.$\footnote{Here,
$j\in\{1,...,k\}$ and $i_j\in I{\!_{^{_{{\HH}_j}}}}=\{
n_{j-1}+1,..., n_{j}\}$.} The {\it homogeneous dimension} of $\GG$
is the integer $\Qdim$, coinciding with the {\it Hausdorff
dimension} of $(\GG,\dc)$ as a metric space (see \cite{Mi},
\cite{Montgomery}, \cite{Gr1}).

We remind that the {\it structural constants} of the Lie algebra
$\gg$ associated with the (left-invariant) frame $\underline{X}$
are defined by
$$\SC^R_{IJ}:=\langle [X_I,X_J],
 X_R\rangle\quad(I,J, R=1,...,n).$$
\noindent {They satisfy the customary properties:
\begin{itemize}\item  $\SC^R_{IJ} +\SC^R_{JI}=0$\,\,(skew-symmetry) \item
$\sum_{J=1}^{n} \SC^I_{JL}\SC^{J}_{RM} + \SC^I_{JM}\SC^{J}_{LR} +
\SC^I_{JR}\SC^{J}_{ML}=0$\,\,(Jacobi's identity).\end{itemize}
\noindent The stratification hypothesis on the Lie algebra implies
the following further property:
\begin{equation}\label{C}X_i\in {\HH}_{l},\, X_j \in
{\HH}_{\DH}\Longrightarrow [X_i,X_j]\in {\HH}_{l+m}.\end{equation}
Therefore, if $i\in I{\!_{^{_{{\HH}_s}}}}$ and $j\in
I{\!_{^{_{{\HH}_r}}}}$, one has
\begin{equation}\label{chypc} \SC^m_{ij}\neq 0 \Longrightarrow m \in I{\!_{^{_{{\HH}_{s+r}}}}}.\end{equation}

\begin{Defi}\label{nota}Throughout this paper we will
use the following notation:\begin{itemize}\item[(i)]
$C^\alpha\cc:=[\SC^\alpha_{ij}]_{i,j\in
I\cc}\in\mathcal{M}_{\DH_1\times \DH_1}(\R)\,\,\,\qquad(\alpha\in
I\cd)$; \item[(ii)] $ C^\alpha:=[\SC^\alpha_{IJ}]_{I,J=1,...,n}\in
\mathcal{M}_{n\times n}(\R)\qquad(\alpha\in I\vv).$\end{itemize}
Furthermore, for any $Z=\sum_{\alpha\in I\vv}z_\alpha
X_\alpha\in\VV$, we will set
\begin{itemize}\item[(iii)] $C\cc(Z):=\sum_{\alpha\in I\cd}z_\alpha
C^\alpha\cc$; \item[(iv)] $C(Z):=\sum_{\alpha\in I\vv}z_\alpha
C^\alpha$.\end{itemize}
\end{Defi}

\begin{Defi}\label{twist}
Let $i\in\{1,...,k-1\}$. Then the $i$-{\bf{th curvature}} of $\HH$
 is the skew-symmetric, bilinear map
$$\Om\ci:\HH\otimes{\HH}_{i}\longrightarrow {\HH}_{i+1},\qquad
\Om\ci(X\otimes Y):=[X,Y] \mod {\TT}^{i}$$whenever $\,X\in\HH$ and
$Y\in \HH_i$. By definition of $k$-step Carnot group, one has
$\Om\ck(\cdot,\cdot)=0$.
\end{Defi}Since the bracket map
$[\cdot,\cdot]:\HH\otimes{\HH}_{i}\longrightarrow {\HH}_{i+1}\,
(i=1,...,{k-1})$ is surjective, the definition is well posed. We
stress that the 1st curvature
$\Om\cc(\cdot,\cdot):=\Om\cu(\cdot,\cdot)$ of $\HH$ is the
``curvature of a distribution''; see \cite{GE1}, \cite{Gr2},
\cite{Montgomery}.

If  $Y\in\TG$, let $Y=(Y_1,...,Y_k)$ be the canonical
decomposition of $Y$ with respect to the Carnot grading, i.e.
$Y=\sum_{i=1}^k\P\ci(Y)$, where $\P\ci$ is the orthogonal
projection onto $\HH_i$. Set
$$\Om(X,Y):=\sum_{i=1}^{k-1}\Om\ci(X,Y_i),$$for $X\in \HH$
and $Y\in\TG$. Then we have

\begin{lemma}\label{twist1}Let $X\in \HH$ and $Y, Z\in \TG$. Then
we have\begin{itemize}\item[(i)]
$\langle\Om\cc(X,Y),Z\rangle=\langle C\cc(Z) Y, X\rangle;$
\item[(ii)] $\langle\Om(X,Y),Z\rangle=\langle C(Z) Y,
X\rangle.$\end{itemize}
\end{lemma}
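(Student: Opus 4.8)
The plan is to unwind both sides of each identity in a graded orthonormal frame $\underline{X}=\{X_1,\dots,X_n\}$ and check equality componentwise. Fix $X\in\HH$, $Y,Z\in\TG$ and write $X=\sum_{i\in I\cc}x_iX_i$, $Y=\sum_{J=1}^n y_JX_J$, $Z=\sum_{R=1}^n z_RX_R$. For part (ii), the key observation is that $\langle\Om(X,Y),Z\rangle$ only sees the components of $Z$ lying in $\HH_2\oplus\dots\oplus\HH_k=\VV$, since each $\Om\ci(X,Y_i)$ takes values in $\HH_{i+1}\subseteq\VV$; correspondingly, the matrix $C(Z)=\sum_{\alpha\in I\vv}z_\alpha C^\alpha$ only involves the vertical components $z_\alpha$ of $Z$. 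So it suffices to compute, for each fixed $\alpha\in I\vv$, the quantity $\langle\Om(X,Y),X_\alpha\rangle$ and compare it with $\langle C^\alpha Y,X\rangle=\sum_{I,J}\SC^\alpha_{IJ}\,x_I\,y_J$, recalling $x_I=0$ for $I\notin I\cc$ since $X\in\HH$.

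The heart of the matter is the identity $\langle\Om(X,Y),X_\alpha\rangle=\sum_{i\in I\cc}\sum_{J=1}^n \SC^\alpha_{iJ}\,x_i\,y_J$. Here one uses Definition \ref{twist}: writing $Y=\sum_{l=1}^k Y_l$ with $Y_l=\P\cl(Y)$, by definition $\Om\cl(X,Y_l)=[X,Y_l]\bmod\TT^l$, and projecting onto $\HH_{l+1}$ (equivalently, pairing with $X_\alpha$ for $\alpha\in I{\!_{^{_{\HH_{l+1}}}}}$) kills the $\TT^l$ part, so $\langle\Om\cl(X,Y_l),X_\alpha\rangle=\langle[X,Y_l],X_\alpha\rangle$ for $\alpha\in I{\!_{^{_{\HH_{l+1}}}}}$. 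Summing over $l$ and using property \eqref{chypc} — which says $\SC^\alpha_{iJ}$ can be nonzero only when the orders match up, i.e.\ $[X,Y_l]$ contributes to $\HH_\alpha$ precisely when $\mathrm{ord}(\alpha)=l+1$ — one gets that for each fixed $\alpha$ exactly one value of $l$ contributes, and $\langle\Om(X,Y),X_\alpha\rangle=\langle[X,Y],X_\alpha\rangle=\sum_{i,J}\SC^\alpha_{iJ}x_iy_J$ by bilinearity of the bracket and the definition of the structural constants. This is exactly $\langle C^\alpha Y,X\rangle$, and summing $z_\alpha$ times this over $\alpha\in I\vv$ gives $\langle C(Z)Y,X\rangle$, proving (ii).

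Part (i) is the special case of (ii) obtained by restricting attention to $\alpha\in I\cd$ (i.e.\ the first curvature $\Om\cc=\Om\cu$ landing in $\HH_2$) and to the corresponding $\DH_1\times\DH_1$ blocks: when $X,Y\in\HH$ only the block $C^\alpha\cc=[\SC^\alpha_{ij}]_{i,j\in I\cc}$ survives in $\langle C^\alpha Y,X\rangle$ because both $X$ and $Y$ are horizontal, and $\Om\cc(X,Y)=[X,Y]\bmod\HH$ has components only in $\HH_2$, indexed by $\alpha\in I\cd$. So (i) follows from the same computation with all sums restricted to horizontal indices. The only step requiring care — and the one I would write out in detail — is the bookkeeping with the filtration: verifying that ``$\bmod\,\TT^l$'' in the definition of $\Om\cl$ does not discard any term relevant to the pairing with $X_\alpha$, which is precisely guaranteed by the grading property \eqref{chypc}; everything else is linear algebra in the chosen frame.
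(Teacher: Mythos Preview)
Your proposal is correct and follows exactly the route the paper indicates: the paper's proof consists of the single sentence ``The proof is an immediate consequence of Definition \ref{twist} and Definition \ref{nota},'' and what you have written is precisely the componentwise unwinding of those two definitions in the graded frame, together with the grading constraint \eqref{chypc} to handle the ``$\bmod\ \TT^l$'' bookkeeping. There is no alternative argument to compare against; you have simply supplied the details the authors left implicit.
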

\begin{proof}The proof is an immediate consequence of Definition \ref{twist} and Definition
\ref{nota}.

\end{proof}

 In the sequel, we will give a definition of connection
which recovers the usual definitions of Riemannian, partial and
non-holonomic connections. Classical notions of connection
(linear, affine or Riemannian) and related topics can be found in
\cite{Helgason}, \cite{Hicks}.

Partial connections was defined by Z. Ge in \cite{GE1}; see also
\cite{Gr2} and \cite{KRP}. Non-holonomic connections were used by
\'E. Cartan
 in his studies on non-holonomic mechanics and
by  the Russian school; see the survey by Vershik and Gershkovich,
\cite{Ver}.

\begin{Defi}\label{CONNgenerale} Let $N$ be a $\cin$ smooth
manifold
 and let $\pi_{_{E}}:E\longrightarrow {N},
\,\pi_{_{F}}:F\longrightarrow {N}$ be smooth subbundles of $\TT
{N}$. An $E$-{\bf connection} $\nabla^{(E,F)}$ {\bf on} $F$ is a
rule which assigns to each vector field $X\in \cin({N},E)$ an
$\R$-linear transformation
$\nabla^{(E,F)}_X:\cin({N},F)\longrightarrow\cin({N},F)$ such that
\begin{eqnarray*}{\mathit{(i)}}\,\,\nabla^{(E,F)}_{f X +
g Y} Z &= f \nabla^{(E,F)}_X Z + g \nabla^{(E,F)}_Y Z\quad
&\forall\,\,X,\,Y \in \cin(N,E)\,\,\forall\,\,Z \in
\cin(N,F)\\\,\,&&\forall\,\,f,\,g\in\cin({N});\\
{\mathit{(ii)}}\,\,\nabla^{(E,F)}_{X} f Y &= f \nabla^{(E,F)}_X\,Y
+  (X f)\,Y\quad &\forall\,\,X,\,Y\in
\cin(N,E)\qquad\forall\,\,f\in\cin({N}).\end{eqnarray*}
 If $E=F$ we shall set $\blae:=\nabla^{(E,E)}$ and
call $\blae$ an $E$-{\bf connection}. Any such connection will be
called a {\bf partial connection} of $\TT N$. If $E=\TT {N}$, then
$\nabla^{(\TT {N},F)}$ is called a {\bf non-holonomic
$F$-connection}\footnote{This definition recovers the usual one of
``vector bundle connection'' (see \cite{Milnor2}) where instead of
a generic vector bundle $\pi: F\longrightarrow N$ we make use of a
subbundle of the tangent bundle.}. If $E$ has a positive definite
inner product $g\ce$, then an $E$-connection $\blae$ is said {\bf
metric preserving} if
\begin{eqnarray*}{\mathit{(iii)}}\,\,Z g\ce(X,Y)=g\ce(\blae_{Z}X,Y)+
g\ce(X,\blae_{Z}Y)\qquad \forall\, X, Y,
Z\in\cin(N,E).\end{eqnarray*} The {\bf torsion} $\ore$ associated
to the $E$-connection $\blae$ is defined by
$$\ore(X,Y):=\blae_{X}Y -\blae_{Y}X -
\P\ce[X,Y]\quad \forall\, X, Y\in \cin(N,E),$$where $\P\ce:\TT
N\longrightarrow E$ denotes the orthogonal projection onto $E$. An
$E$-connection is {\bf torsion free} if $\ore(X,Y)=0$ for every
$X, Y\in\cin(N,E)$. We shall say that $\blae$ is the {\bf
Levi-Civita $E$-connection} on $E$ if it is metric preserving and
torsion-free. Note that if $E=\TT N$, terminology and definitions
adopted here are the customary ones. In this case, we will denote
by $\nabla$ the {\bf Levi-Civita connection} on $\TT N$.\end{Defi}

We stress that the difference between the definitions of partial
and non-holonomic connection is that the latter allows us to
covariantly differentiate along any curve of ${N}$ whereas using
the first one only curves that are tangent to the subbundle $E$
can be considered.

\begin{Defi}\label{parzconn}
In the sequel, $\nabla$ will denote the unique {\bf left-invariant
Levi-Civita connection} on $\GG$ associated with the fixed left
invariant metric $g$. Moreover, for any $X,
Y\in\XH:=\cin(\GG,\HH)$, we shall set $\gc_X Y:=\PH(\nabla_X Y)$.
We note that $\gc$ is a partial connection, also called {\bf
horizontal connection} or {\bf $\HH$-connection}. For notational
convenience, in the sequel we will denote by the same symbol the
non-holonomic connection on $\GG$, i.e. $\gc=\nabla^{(\TG,\HH)}$.
\end{Defi}

\begin{Defi}\label{horcur} We define the {\bf horizontal curvature} $\RC\cc$ of the $\HH$-connection $\gc$
to be the trilinear map $
\RC\cc:{\HH}\times{\HH}\times{\HH}\longrightarrow{\HH}$ given by
\begin{eqnarray}\RC\cc(X,Y)Z:=\gc_Y\gc_X Z -
\gc_X \gc_Y Z - \gc_{\PH[Y, X]} Z,
\end{eqnarray}where $X, Y, Z\in {\XH}$. In the sequel, $\RC$ will denote the {\bf Riemannian
curvature tensor}, defined by
\begin{eqnarray*}\RC(X,Y)Z:=\nabla_Y\nabla_XZ-\nabla_X\nabla_YZ -\nabla_{[Y,X]}Z\quad
(X, Y, Z\in\XG).\end{eqnarray*}
\end{Defi}

\begin{oss}\label{flatness}From
Definition \ref{parzconn}, using the properties of the structural
constants of any Levi-Civita connection, we get that the
horizontal connection $\gc$ is {\bf flat}, i.e.
$$\gc_{X_i}X_j=0\qquad (i,j\in I\cc).$$
Note that $\gc$ turns out to be compatible  with  the
sub-Riemannian metric $g\cc$, i.e.
$$X\langle Y, Z \rangle\cc=\langle \gc_X Y, Z \rangle\cc
+ \langle  Y, \gc_X Z \rangle\cc\qquad \forall\,\,X, Y, Z\in
\XH.$$This follows immediately from the very definition of $\gc$
using the compatibility property of the left-invariant Levi-Civita
connection $\nabla$ with respect to the Riemannian metric $g$.
Furthermore, $\gc$ is torsion-free, i.e.
$$\gc_X Y - \gc_Y X-\PH[X,Y]=0\qquad \forall\,\,X, Y\in \XH.$$In
particular, it turns out that  the horizontal curvature $\RC\cc$
is identically zero.
\end{oss}

\begin{Defi}
If $\psi\in\cin({\GG})$ we define the {\bf horizontal gradient} of
$\psi$,  $\dg \psi$, as the unique horizontal vector field such
that
$$\langle\dg \psi,X \rangle\cc= d \psi (X) = X
\psi\quad \forall \,X\in \XH.$$Later on, we will denote by
$\mathcal{J}\cc$ the Jacobian matrix of a vector-valued function,
computed with respect to any given orthonormal frame
$\underline{\tau}\cc=\{\tau_1,...,\tau_{h_1}\}$ for $\HH$.
\end{Defi}

We shall now define the left-invariant co-frame
$\underline{\omega}:=\{\omega_I:I=1,...,n\}$ dual  to
$\underline{X}$ (with respect to to the left invariant  metric
$g$). The {\it left-invariant 1-forms} \footnote{That is, $L_x
^{\ast}\omega_I=\omega_I$ for every $x\in\GG.$} $\omega_I$ are
uniquely determined by the condition:
$$\omega_I(X_J)=\langle X_I,X_J\rangle=\delta_I^J\quad {\mbox{(Kroneker)}}\quad\,(I, J=1,...,n).$$
{The {\it Cartan's structure equations} for the left-invariant
co-frame $\underline{\omega}$ are given by: \[{\rm(I)}\quad d
\omega_I = \sum_{J=1}^{n} \omega_{IJ}\wedge \omega_J,\qquad\,\,\,
{\rm(II)}\quad d \omega_{JK} = \sum_{L=1}^{n} \omega_{JL}\wedge
\omega_{LK} - \Om_{JK}\qquad(I, J, K=1,...,n),\] where
$\omega_{IJ}(X)=\langle\nabla_{X} X_I, X_J\rangle$ are the {\it
connection 1-forms} for $ \underline{\omega}$  while $\Om_{JK}$
are the {\it curvature 2-forms}, defined by
$$\Om_{JK}(X,Y)=\omega_K(\RC(X,Y)X_J)\qquad(X, Y\in\XG).$$

For what concerns the theory of connections on Lie group and
left-invariant differential forms, see \cite{Helgason}. Moreover,
for many topics about the geometry of nilpotent Lie groups
equipped with a left-invariant connection, see \cite{3}; for the
Carnot case see \cite{Monte, Monteb}.

\begin{oss}We have\begin{equation}\label{c2}\nabla_{X_I} X_J =
\frac{1}{2}\sum_{R=1}^n( \SC_{IJ}^R  - \SC_{JR}^I + \SC_{RI}^J)
X_R\qquad (I,\,J=1,...,n).\end{equation}This formula and condition
\eqref{C} are needed to make explicit computations in terms of the
structural constants. For instance, from (\ref{c2}) it follows
that the 1st structure equation for the coframe
$\underline{\omega}$, becomes
\begin{equation}\label{dext}d
\omega_R = - \frac{1}{2}\sum_{1\leq I,J \leq n_{i-1}}
\SC^R_{IJ}\,\, \omega_I\wedge\omega_J,\end{equation}where $R\in
I\ci=\{j: n_{i-1}< j \leq n_{i}\}$ and $i=1,...,k$.\end{oss}

In the sequel we will need the notion of {\it covariant derivative
along a path}; we refer the reader to \cite{Ch1} for a detailed
introduction.

\begin{Defi}Let $\gamma:[a,b]\subset\R\longrightarrow\GG$
be a $\mathbf{C}^1$ path and let
$X:[a,b]\subset\R\longrightarrow\TG$,
$X=\sum_I\xi_I(t)X_I(\gamma)$, be a vector field along $\gamma$.
Then the {\bf covariant derivative of $X$ along $\gamma$}, denoted
by $\nabla_t$, is defined as$$\nabla_tX:=
\nabla_{\dot{\gamma}}X=\sum_{I=1}^n\Big\{\dot{\xi_I}+ \sum_{J,
K=1}^n\Gamma_{JK}^I(\gamma) \xi_J\,
\dot{\gamma_K}\Big\}X_L(\gamma),$$where $\nabla$ denotes the
Levi-Civita connection on $\GG$ and
$$\Gamma_{JK}^I:=\langle\nabla_{X_K}X_J,X_I\rangle=\omega_{JI}(X_K)\qquad(I,
J, K=1,...,n)$$ are the  Christoffel symbols of $\nabla$, with respect to
the left invariant frame $\underline{X}=\{X_1,...,X_n\}$ on
$\GG$.\end{Defi}

We end this section with some important
examples.\begin{es}[Heisenberg group $\mathbb{H}^1$]
\label{epocase}Let $\mathfrak{h}_1:=\TT_0\mathbb{H}^1=\R^{3}$
denote the Lie algebra of the Heisenberg group $\mathbb{H}^1$,
that is the most simple example of $2$-step Carnot group. Its Lie
algebra $\mathfrak{h}_1$
satisfies:$$[\ee_{1},\ee_{1}]=\ee_{3}$$and all other commutators
vanish. We have $\mathfrak{h}_1=\HH\oplus \R\ee_{3}$ where
$\HH={\rm span}_{\R}\{\ee_1,\ee_2\}.$ In particular, the 2nd layer
of the grading $\R\ee_{3}$ is the center of the Lie algebra
$\mathfrak{h}_n$. These conditions determine the group law
$\bullet$ via the Baker-Campbell-Hausdorff formula. Indeed, if
$x=\esp(\sum_{i=1}^{3}x_i\ee_i),\, y=\esp(\sum_{i=1}^{3}y_i
\ee_i)\in \mathbb{H}^1$, one has\begin{center}$x\bullet y =\esp
\big(x_1 + y_1,x_2+y_2, x_{3} + y_{3} + \frac{1}{2}(x_{1} y_{2}-
x_{2} y_{1})\big).$\end{center}The standard frame of orthonormal
left invariant vector fields for $\mathbb{H}^1$ is given, using
exponential coordinates, by\begin{eqnarray*}X_1(x)&:=&L_{x\ast
}\ee_1=\ee_1-\frac{x_2}{2}\,\ee_3;\\
X_2(x)&:=&L_{x\ast}\ee_2=\ee_2+\frac{x_1}{2}\,\ee_3;\\X_3(x)&:=&L_{x\ast
}\ee_3=\ee_3.\end{eqnarray*}

\end{es}
\begin{es}[Heisenberg group $\mathbb{H}^n$] \label{epocase}Let $\mathfrak{h}_n:=\TT_0\mathbb{H}^n=\R^{2n + 1}$
denote the Lie algebra of the Heisenberg group $\mathbb{H}^n$,
that is the most important example of $2$-step Carnot group. Its
Lie algebra $\mathfrak{h}_n$ is defined by the following
rules:$$[\ee_{i},\ee_{i+1}]=\ee_{2n + 1}\qquad\mbox{for
every}\,\,\,
 i=2k-1,\,\,k=1,...,n=\frac{\DH}{2}$$and all other commutators
vanish. One has $\mathfrak{h}_n=\HH\oplus \R\ee_{2n+1}$, where
\[\HH={\rm span}_{\R}\{\ee_i:i=1,...,2n\}.\]The 2nd
layer of the grading $\R\ee_{2n+1}$ is the center of
$\mathfrak{h}_n$. The above conditions uniquely determine the
group law $\bullet$ via the Baker-Campbell-Hausdorff formula. More
precisely, if $x=\esp(\sum_{i=1}^{2n+1}x_i\ee_i),\,
y=\esp(\sum_{i=1}^{2n+1}y_i \ee_i)\in \mathbb{H}^n$,
then\begin{eqnarray*}x\bullet y =\esp \Big(x_1 + y_1,...,x_{2n} +
y_{2n}, x_{2n+1} + y_{2n+1} + \frac{1}{2}\sum_{k=1}^{n} (x_{2k-1}
y_{2k}- x_{2k} y_{2k-1})\Big).\end{eqnarray*}If $i\in\{1,...,n\}$,
the standard frame of orthonormal left invariant vector fields for
$\mathbb{H}^n$ is given, using exponential coordinates, by
\begin{eqnarray*}
X_{2i-1}(x)&:=&L_{x\ast}\ee_{2i-1}=\ee_{2i-1}-\frac{x_{2i}}{2}\,\ee_{2n+1};\\
X_{2i}(x)&:=&L_{x\ast}\ee_{2i}=\ee_{2i}+\frac{x_{2i-1}}{2}\,\ee_{2n+1};\\
X_{2n+1}(x)&:=&L_{x\ast}\ee_{2n+1}=\ee_{2n+1}.\end{eqnarray*}
\end{es}

\begin{es}[2-step Carnot groups]We have $\gg=\HH\oplus\HH_2$, where $\HH_2$ is the center of
$\gg$. Furthermore, we have the following rules:
$$[\ee_i,\ee_j]=\sum_{\alpha\in
I\cd}\SC^{\alpha}_{ij}\,\ee_\alpha\qquad\mbox{for every}\,\,\,
i,\,j\in I\cc=\{1,...,\DH_1\}$$and all other commutators vanish.
Let $x=\esp(\sum_{i=1}^{n}x_i\ee_i), \,y=\esp(\sum_{i=1}^{n}y_i
\ee_i)\in \GG$. Then
\begin{eqnarray*}x\bullet y =\esp \Big(x+y-\frac{1}{2}
\sum_{\alpha\in I\cd}\langle C^\alpha\cc x,y\rangle\ee_\alpha
\Big).\end{eqnarray*}The standard frame of orthonormal left
invariant vector fields for $\GG$ is given
by\begin{eqnarray*}X_{i}(x)&=&L_{x\ast
}\ee_{i}=\ee_{i}-\frac{1}{2}\sum_{\alpha\in I\cd}\langle
C^\alpha\cc x,\ee_i\rangle \ee_{\alpha}\qquad (i\in
I\cc)\\X_{\alpha}(x)&=&L_{x\ast
}\ee_{\alpha}=\ee_{\alpha}\qquad\qquad\qquad\,\,\qquad\qquad(\alpha\in
I\cd).\end{eqnarray*}Remind that
$C^\alpha\cc=[\SC^\alpha_{ij}]_{i,j\in I\cc}$; see Definition
\ref{nota}.
\end{es}

\begin{es}[Engel group $\mathbb{E}^1$]\label{e} The Engel group is
the simpler example of a $3$-step Carnot group. Its Lie algebra
$\mathfrak{e}$ is $4$-dimensional and is defined by the following
rules:
$$[\ee_{1},\ee_{2}]=\ee_{3},\,\,[\ee_{1},\ee_{3}]=\ee_{4}$$and
all other commutators vanish. We have $\mathfrak{e}=\HH\oplus\R
\ee_3\oplus\R\ee_4,$
 where $\HH=\rm{span}_{\R}\{\ee_1,\ee_2\}$ and the center of the Lie algebra $\mathfrak{e}$ is ${\R}\ee_4.$
Therefore $C^3\cc=\left|\!\!
\begin{array}{cc}
  0 & 1 \\
  -1 & 0 \\
\end{array}\!\!
\right|$ and $$C^4=\left|\!\!
\begin{array}{cccc}
  0 & 0 & 1 & 0 \\
  0 & 0 & 0& 0 \\
  -1 & 0 & 0& 0\\0 & 0 & 0& 0\\
\end{array}\!\!
\right|.$$ The group law $\bullet$ is given, for
$x=\esp(\sum_{i=1}^{4}x_i\ee_i),$ $y=\esp(\sum_{i=1}^{4}y_i
\ee_i)\in \mathbb{E}^1$, by
\begin{eqnarray*}x\bullet y =\esp \bigg(x+y-\frac{1}{2}
\langle C^3\cc x,y\rangle\ee_3 - \Big(\frac{1}{2} \langle C^4
x,y\rangle + \frac{1}{12}\langle C^3\cc x\cc,y\cc\rangle
\big\langle C^4\ee_3,(x-y)\big\rangle
\Big)\ee_4\bigg).\end{eqnarray*}The standard frame of orthonormal
left invariant vector fields for $\mathbb{E}^1$ is given
by\begin{eqnarray*}X_{i}(x)&:=&L_{x\ast
}\ee_{i}=\ee_{i}-\frac{1}{2}\langle C^3\cc x\cc,\ee_i\rangle \ee_3
-\Big(\frac{1}{2}\langle C^4 x,\ee_i\rangle  + \frac{1}{12}\langle
C^3\cc x\cc,\ee_i\rangle
\langle C^4\ee_3,x\rangle\Big)\ee_4\quad (i=1,2)\\
X_{3}(x)&:=&L_{x\ast }\ee_{3}=\ee_{3}-\frac{1}{2}\langle C^4
x,\ee_3\rangle \ee_4
\\X_{4}(x)&:=&L_{x\ast
}\ee_{4}=\ee_{4}.\end{eqnarray*}\end{es}

\begin{es}[3-step Carnot groups]\label{3stex}We have that  $\gg=\HH\oplus\HH_2\oplus\HH_3$ where $\HH_3$
is the center of $\gg$. In order to describe $\gg$, we make use of
the matrices of the structure constants of $\gg$,
i.e.\begin{eqnarray*}C^\alpha\cc&=&[\SC^\alpha_{ij}]_{i,j\in
I\cc}\quad\quad\,\,\quad(\alpha\in
I\cd)\\C^\alpha&=&[\SC^\alpha_{IJ}]_{I,J=1,...,n}\qquad(\beta\in
I\ctr).\end{eqnarray*}The group law $\bullet$ is given, for
$x=\esp(\sum_{i=1}^{n}x_i\ee_i),$ $y=\esp(\sum_{i=1}^{n}y_i
\ee_i)\in \GG$, by
\begin{eqnarray*}x\bullet y &=&\esp \bigg(x+y-\frac{1}{2}
\sum_{\alpha\in I\cd}\langle C^\alpha\cc
x\cc,y\cc\rangle\ee_\alpha
 \\&&-\sum_{\beta\in I\ctr}
\Big[\frac{1}{2} \langle C^\beta x,y\rangle -
\frac{1}{12}\sum_{\alpha\in I\cd}\big\langle
C^\beta(x-y),\ee_\alpha\big\rangle\langle C^\alpha \cc
x\cc,y\cc\rangle \Big]\ee_\beta\bigg).\end{eqnarray*}The standard
frame of orthonormal left invariant vector fields for $\GG$ is
given by\begin{eqnarray*}X_{i}(x)&=&L_{x\ast
}\ee_{i}\\&=&\ee_{i}-\frac{1}{2}\sum_{\alpha\in I\cd}\langle
C^\alpha\cc x\cc,\ee_i\rangle \ee_{\alpha}-\sum_{\beta\in
I\ctr}\Big[\frac{1}{2}\langle C^\beta x,\ee_i\rangle  -
\frac{1}{12}\sum_{\alpha\in I\cd}\langle C^\beta x,
\ee_\alpha\rangle\langle C^\alpha\cc x\cc,\ee_i\rangle
\Big]\ee_\beta\\&&(i\in I\cc);\\ X_{\gamma}(x)&=&L_{x\ast
}\ee_{\gamma}=\ee_{\gamma}-\frac{1}{2}\sum_{\beta\in I\ctr}\langle
C^\beta x,\ee_\gamma\rangle\ee_\beta\quad(\gamma\in I\cd);
\\X_{\beta}(x)&=&L_{x\ast
}\ee_{\alpha}=\ee_{\beta}\quad (\beta\in I\ctr).\end{eqnarray*}
\end{es}
We remind that, for any $z=\esp(\sum_Iz_I\ee_I)\in\GG$ we usually
set $z\cc:=(z_1,...,z_\DH)\in\R^{\DH}$; see Notation
\ref{Janeiro}.

\subsection{Hypersurfaces: some basic facts}\label{dere} We now remind some basic facts about
hypersurfaces which will be needed in the sequel. This material
can be found in \cite{Monte, Monteb}.

Later on, $\mathcal{H}^m_{\bf cc}$ and $\mathcal{S}^m_{\bf cc}$
will denote, respectively, the usual and the spherical Hausdorff
measures associated with the CC-distance. The (left-invariant)
{\it Riemannian volume form} on $\GG$ is defined as
$\sigma^n\rr:=\Lambda_{i=1}^n\omega_i\in \Lambda^n(\TG).$

\begin{oss} By integrating $\sigma^n\rr$ we obtain a measure ${vol}^n\rr$,
which is the {\it Haar measure} of $\GG$. Since the determinant of
${L_{x}}_{\ast}$ is equal to 1, this measure equals the measure
induced on $\GG$ by the push-forward of the $n$-dimensional
Lebesgue measure $\mathcal{L}^n$ on $\Rn\cong\gg$. Moreover, up to
a constant multiple, ${vol}^n\rr$ equals the {\it $Q$-dimensional
Hausdorff measure} $\HC$ on $\GG$. This follows because they are
both Haar measures for the group and therefore equal up to a
constant; see \cite{Montgomery}. Here we assume this constant
equal to $1$.
\end{oss}

In the study of hypersurfaces of Carnot groups we have to
introduce the notion of {\it characteristic point}.
\begin{Defi}\label{caratt}
If $S\subset\GG$ is a $\mathbf{C}^r$-smooth $(r=1,...,\infty)$
hypersurface, we say that $S$ is {\bf characteristic} at $x\in S$
if $\dim\,\HH_x = \dim (\HH_x \cap \TT_x S)$ or, equivalently, if
$\HH_x\subset\TT_x S$. The {\bf characteristic set} of $S$ is
denoted by $C_S$, i.e.$$ C_S:=\{x\in S : \dim\,\HH_x = \dim (\HH_x
\cap \TT_x S)\}.$$
\end{Defi}
A hypersurface $S\subset\GG$, oriented by its unit normal vector
$\nu$, is {\it non-characteristic} if, and only if, the horizontal
subbundle $\HH$ is {\it transversal} to $S$ ($\HH\pitchfork \TS$).
We have then
$$\HH_x\pitchfork\TT_x S\Longleftrightarrow \PH \nu(x)\neq 0\Longleftrightarrow\exists
  X\in\XH:
 \langle X(x), \nu(x)\rangle \neq 0\qquad(x\in S),$$ where
 $\PH:\TG\longrightarrow\HH$ denotes the orthogonal projection onto
 $\HH$.

\begin{oss}[Hausdorff measure of $C_S$; see \cite{Mag}]
If $S\subset\GG$ is a $\mathbf{C}^1$-smooth hypersurface, $r\geq
1$ then the $Q-1$-dimensional Hausdorff measure associated with
$\dc$ of $C_S$ is zero, i.e.
$$\mathcal{H}_{\bf cc}^{Q-1}(C_S)=0.$$\end{oss}

\begin{oss}[Riemannian measure on hypersurfaces]
Let $S\subset\GG$ be a $\mathbf{C}^r$-smooth hypersurface and let
$\nu$ denote the unit normal vector along $S$. By definition, the
$\,n-1$-dimensional Riemannian measure along $S$ is given
by\begin{equation}\label{misup}\sigma^{n-1}\rr\res
S:=(\nu\LL\sigma^{n}\rr)|_{S},\end{equation} where  $\LL$ denotes
the ``contraction'', or interior product, of a differential
 form\footnote{\label{fo4}The linear map $\LL:
\Lambda^k(\TG)\rightarrow\Lambda^{k-1}(\TG)$ is defined, for
$X\in\TG$ and $\omega^k\in\Lambda^k(\TG)$, by $(X \LL \omega^k)
(Y_1,...,Y_{k-1}):=\omega^k (X,Y_1,...,Y_{k-1})$; see
\cite{Helgason}, \cite{FE}.}.\end{oss}

Since we shall study smooth hypersurfaces, instead of the usual
weak definition of $\HH$-perimeter measure (see \cite{A2},
\cite{CDG}, \cite{CDG}, \cite{gar}, \cite{FSSC3, FSSC4, FSSC5},
\cite{GN}) we now introduce a $(n-1)$-differential form which, by
integration, coincides with the $\HH$-perimeter measure.

\begin{Defi}[$\per$-measure on hypersurfaces]
Let $S\subset\GG$ be a $\mathbf{C}^r$-smooth non-characteristic
hypersurface and let us denote by $\nu$ its unit normal vector. We
will call $\HH$-{\bf normal} along $S$, the normalized projection
onto $\HH$ of $\nu$, i.e.$$\nn: =\frac{\PH\nu}{|\PH\nu|}.$$ We
then define the $(n-1)$-dimensional measure $\per$ along ${S}$ to
be the measure associated with the $(n-1)$-differential form
$\per\in\Lambda^{n-1}(\TS)$ given by the contraction of the volume
form $\sigma^n\rr$ of $\GG$ with the horizontal unit normal $\nn$,
i.e.\begin{equation}\per \res S:=(\nn \LL
\sigma^n\rr)|_S.\end{equation} If we allow $S$ to have
characteristic points, we may trivially extend the definition of
$\per$ by setting $\per\res C_{S}= 0$. We stress that $\per \res S
= |\PH \nu |\cdot\sigma^{n-1}\rr\, \res S$.\end{Defi}

\begin{Defi}\label{carca}
If $\nn$ is the horizontal unit normal along $S$, at each regular
point $x\in S\setminus {\it C}_S$ one has that $\HH_x= (\nn)_x
\oplus \mathit{H}_x S$, where we have set$$\mathit{H}_x
S:=\HH_x\cap\TT_x S.$$We call $\mathit{H}_x S$ the {\bf horizontal
tangent space} at $x$ along $S$. We define in the obvious way the
associated subbundles $\HS (\subset \TS)$ and $\nn S$, called,
respectively, {\bf horizontal tangent bundle} and {\bf horizontal
normal  bundle} of $S$. Moreover we shall set:
\begin{itemize}\item[{(i)}]$\mathcal{N}:=\frac{\nu}{|\PH\nu|}=(\nn, \varpi),$
 where \[\varpi:=\frac{\P\vv\nu}{|\PH\nu|};\]\item[{(ii)}]$\varpi_\alpha:=\frac{\nu_\alpha}{|\PH\nu|}\qquad
(\alpha\in I\vv)$;\item[{(iii)}] $C\cc(\varpi):=\sum_{\alpha\in
{I\cd}} \varpi_\alpha\,C^\alpha\cc;$ \item[{(iv)}]
$C(\varpi):=\sum_{\alpha\in {I\vv}}
\varpi_\alpha\,C^\alpha.$\end{itemize}
\end{Defi}

\section{\large The CC-distance in Carnot groups}\label{vipo}

\subsection{On normal and abnormal CC-geodesics in Carnot
groups}\label{onnorm} Below we shall introduce the main notions
about normal and abnormal geodesics in the setting of Carnot
groups and we shall explicitly write down the associated
equations. Our approach is the Hamiltonian one and we follow that
given in \cite{GoKA} (see also \cite{Montgomery2}). However, at
the end of this section, we shall also briefly recall and discuss
the Lagrangian point of view, which will turn out to be useful in
the sequel.

From now on we shall set \[\DH:=\DH_1=\dim\HH.\]

As already recalled in the previous section, the CC-metric $\dc$
measures the distance between two given points $p$ and $q$ by
minimizing the length of all (absolutely continuous) horizontal
curves (i.e. tangent to the horizontal subbundle
$\HH\subset\TT\GG$) joining $p$ and $q$. Thus we have to study
minimizers for this metric. A {\it minimizer} is any absolutely
continuous horizontal curve $\gamma:I\subset\R\longrightarrow\GG$
which is such that for every $t\in I$ there exists $\epsilon>0$
such that $\gamma$ minimizes the length between $\gamma(t_0)$ and
$\gamma(t_1)$ whenever $t_0, t_1$ belong to $(t-\epsilon,
t+\epsilon)\subset I$. A specific feature of sub-Riemannian
geometry is that minimizers can be of two different types. However
they are not necessarily mutually exclusive. Roughly speaking, the
minimizers of the first type, called {\it normal}, are projections
of solutions of a Hamiltonian system and, in a sense, they
generalize the Riemannian situation since, in particular, they are
differentiable. Minimizers of the second type are called {\it
abnormal} or {\it singular}. Although their existence was
originally deduced from the Pontrjagin Maximum Principle (see
\cite{Montgomery} and discussion therein), they can also be
defined as projection onto $\GG$ of characteristic curves (in the
symplectic sense) of the annihilator of $\HH$ in the cotangent
bundle $\TT^\ast\GG$, as we shall see below.

For sake of completeness we recall the main definitions of the
hamiltonian formalism in our particular setting. A {\it
Hamiltonian} is a function
$\mathcal{H}:\TT^\ast\GG\longrightarrow\R$ where the cotangent
bundle $\TT^\ast\GG$ is the {\it phase space}. The exponential
coordinates $x=\esp(x_1,...,x_n)$, which have been fixed on the
whole $\GG$, induce fiber coordinates on $\TT^\ast\GG$ by
expanding any arbitrary  covector $p\in\TT_x^\ast\GG$ in terms of
the coordinate covector fields $\{dx_1,...,dx_n\}$, i.e.
$p=\sum_{I=1}^n p_I dx_I$. The $2n$ functions
$(x,p)=(x_1,...,x_n,p_1,...,p_n)$ are said {\it canonical
coordinates} on the phase space and the 1-form
$\Theta=\sum_{I=1}^np_I dx_I$ is the {\it tautological one form}
on $\TT_x^\ast\GG$ (which is actually independent of the choice of
coordinates on $\GG$). The {\it canonical symplectic form} on
$\TT^\ast\GG$ is, by definition, the non-degenerate 2-form
$\omega=-d\Theta$. Now, given a function $H$, $\omega$ uniquely
determines a vector field $X_\mathcal{H}$ satisfying
$d\mathcal{H}=\omega(X_\mathcal{H},\cdot)$, which is called the
{\it Hamiltonian vector field} for $\mathcal{H}$ (or symplectic
gradient of $\mathcal{H}$). The {\it Hamilton's equations} for a
smooth Hamiltonian $\mathcal{H}$ are the O.D.E.'s for the integral
curves of $X_\mathcal{H}$. In canonical coordinates, they are
given by
\begin{equation}\label{hameq}\dot{x}_I=\frac{\partial \mathcal{H}}{\partial
p_I},\qquad\dot{p}_I=-\frac{\partial \mathcal{H}}{\partial
x_I}\qquad(I=1,...,n).\end{equation} We remind that the {\it
momentum function} $P_Y:\TT^\ast\GG\longrightarrow\R$ is defined
by $$P_Y(x,p):=p(Y(x)).$$In the sequel $p_I$ will denote the {\it
momentum function} associated to the $I$-th coordinate vector
field\footnote{We are using the notation:$${\partial}/{\partial
x_I}\equiv\ee_I=(0,...,\underbrace{1}_{I-th\, place},...,0)\qquad
(I=1,...,n).$$}\,${\partial}/{\partial x_I}$.     Hence, in
canonical coordinates one has $P_Y(x,p)=\sum_{I=1}^n p_I Y_I(x)$,
where $Y=\sum_{I=1}^n Y_I{\partial}/{\partial x_I}$.

We now start with the derivation of the CC-geodesic equations by
defining the {\it sub-Riemannian Hamiltonian} (or {\it kinetic
energy}) $\mathcal{H}\sr$. To this aim let us set
$$P_i:=P_{X_i}\qquad\mbox{for
every}\,\,  i\in I\cc=\{1,...,\DH\},$$ to denote the momentum
functions associated with a  orthonormal (left-invariant) moving
frame $\underline{X}\cc=\{X_1,...,X_\DH\}$ for $\HH$ and note
that, if $X_i(x)=\sum_{I=1}^n(X_i(x))_I\ee_I,$ one has
$$P_i(x,p)=P_{X_i}(x,p)=\sum_{I=1}^n(X_i(x))_Ip_I\qquad\mbox{for
every}\,\,  i\in I\cc.$$

\begin{no}We shall set \begin{itemize}\item $P\cc:=\sum_{i\in I\cc}P_iX_i$,
\item $P\vv:=\sum_{\alpha\in I\vv}P_\alpha X_\alpha$,\item
$P:=\sum_ IP_IX_I$.\end{itemize}
\end{no}

\begin{Defi}The {\bf sub-Riemannian Hamiltonian} is defined
by$$\mathcal{H}\sr(x,p):=\frac{1}{2}\sum_{i\in I\cc}P_i^2(x,p).$$
The Hamiltonian equations \eqref{hameq} associated with the
sub-Riemannian Hamiltonian $\mathcal{H}\sr$  are called the {\bf
normal geodesics equations}. A {\bf normal curve} is the
projection onto $\GG$ of a solution of the normal geodesics
equations.
\end{Defi}
The following result is well-known and a proof can be found in
\cite{Montgomery}.
\begin{teo}\label{mongeo} Every sufficiently short arc of a normal curve $x\subset\GG$ is a
minimizing CC-geodesic. Moreover $x$ is the unique minimizing
CC-geodesic connecting its endpoints.
\end{teo}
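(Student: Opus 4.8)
The plan is to prove local minimality by the classical \emph{calibration} (Hamilton--Jacobi) method, adapted to the sub-Riemannian Hamiltonian $\mathcal{H}\sr$. First I would reduce to the unit-speed case: since $\mathcal{H}\sr$ is a first integral of its own Hamiltonian flow and is homogeneous of degree two in the momentum functions $P_i$, I may rescale so that along the given normal extremal $(x(t),p(t))$ one has $\mathcal{H}\sr(x(t),p(t))=\tfrac12$; then $|\dot x\cc(t)|\cc=1$, so the normal curve is parametrized by arc-length and its length over $[0,T]$ equals $T$. The goal then becomes to exhibit, on a neighborhood of the arc, a smooth function $u$ solving the eikonal/Hamilton--Jacobi equation $\mathcal{H}\sr(x,du)=\tfrac12$, equivalently $|\grad\cc u|\cc=1$, for which the reference arc is a flow line of $\grad\cc u$.

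To build such a $u$ I would construct a \emph{field of normal extremals}. Choose a small hypersurface $\Sigma$ through $x_0:=x(0)$ transversal to $\dot x(0)$ and a smooth field of initial covectors on $\Sigma$, equal to $p(0)$ at $x_0$ and lying on the level set $\{\mathcal{H}\sr=\tfrac12\}$, chosen transversal to the cotangent fibres, so that the resulting family of Hamiltonian trajectories sweeps out an immersed Lagrangian submanifold $\Lambda\subset\TT^\ast\GG$ containing $(x(t),p(t))$. For $T$ small enough the base projection $\Lambda\longrightarrow\GG$ is a diffeomorphism onto a tubular neighborhood $U$ of the arc; hence $\Lambda=\mathrm{graph}(du)$ for a smooth $u$ on $U$, and $u$ automatically solves $\mathcal{H}\sr(x,du)=\tfrac12$ because $\Lambda$ is Lagrangian and lies in that level set. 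Since $\mathcal{H}\sr$ depends only on the horizontal momenta, the equation $\dot x=\partial\mathcal{H}\sr/\partial p$ gives $\dot x\cc=\grad\cc u$, so the projected trajectories are exactly the integral curves of $\grad\cc u$ and the reference arc is one of them.

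The calibration inequality then settles both assertions. For any absolutely continuous horizontal curve $\gamma:[0,T']\to U$ one has
\[
\frac{d}{dt}\,u(\gamma(t))=du(\dot\gamma(t))=\langle\grad\cc u(\gamma(t)),\dot\gamma(t)\rangle\cc\le|\grad\cc u(\gamma(t))|\cc\,|\dot\gamma(t)|\cc=|\dot\gamma(t)|\cc,
\]
using $du(\dot\gamma)=\langle\grad\cc u,\dot\gamma\rangle\cc$ (valid since $\dot\gamma$ is horizontal), Cauchy--Schwarz, and $|\grad\cc u|\cc=1$. Integrating, the length of $\gamma$ is at least $u(\gamma(T'))-u(\gamma(0))$, with equality if and only if $\dot\gamma=\grad\cc u(\gamma)$ almost everywhere. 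The reference arc realizes equality and connects points whose $u$-difference is $T$, so it is length-minimizing among horizontal curves joining its endpoints inside $U$; choosing the tube $U$ thin compared with $T$ rules out shorter competitors escaping $U$, so the arc is a genuine CC-geodesic. Uniqueness follows from the equality case: any other minimizer must satisfy $\dot\gamma=\grad\cc u(\gamma)$, hence solves the same first-order ODE and, by uniqueness of ODE solutions, coincides with the reference arc.

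The main obstacle is the construction of $\Lambda$ and the verification that its base projection is a local diffeomorphism for short arcs. Unlike the Riemannian case, the \emph{point-centred} sub-Riemannian exponential $\exp\sr(x_0,\cdot)$ has singular differential at the origin --- the cotangent fibre is $n$-dimensional while $\mathcal{H}\sr$ is degenerate in the vertical momenta --- so one cannot set up geodesic polar coordinates and invoke a naive Gauss-lemma argument. This is precisely why I use a field transversal to a hypersurface $\Sigma$ rather than a central field: transversality makes $\Lambda\to\GG$ an immersion at $t=0$, and nondegeneracy for small $t>0$ is then an open condition holding before the first focal point. Checking that $\Lambda$ is genuinely Lagrangian, so that $du$ is exact and $u$ single-valued, and that the level-set condition yields exactly $|\grad\cc u|\cc=1$, are the remaining delicate points, but both are formal consequences of the symplectic invariance of $\mathcal{H}\sr$ along its own flow.
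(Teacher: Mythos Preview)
The paper does not actually prove this theorem: it simply states that the result is well-known and refers the reader to Montgomery's book \cite{Montgomery}. Your calibration/Hamilton--Jacobi argument is precisely the standard proof one finds there (Theorem~1.17 and its proof via a local solution of the Hamilton--Jacobi equation built from a field of extremals transversal to a hypersurface), so your approach matches the intended reference. One small correction: your phrase ``choosing the tube $U$ thin compared with $T$'' has the comparison backwards---the point is rather to take $T$ small enough that the closed CC-ball $\overline{B(x(0),T)}$ lies inside $U$, which forces any competitor of length $\le T$ to remain in $U$ and hence be subject to the calibration inequality; this is where Chow's theorem (equivalence of the CC-topology with the manifold topology) enters.
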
\noindent The normal CC-geodesic equations are
given by the following system:

\begin{eqnarray}\label{ngs}\left\{\begin{array}{ll}\dot{x}=P\cc\\\dot{P}=-\sum_{\alpha\in
I\vv} P_\alpha C^\alpha P\cc;\end{array}\right.\end{eqnarray}see
Definition \ref{nota} for the notation $C^\alpha$. Note that the
first equation express the fact that the normal curve is
horizontal. The second equation can be deduced by noting that, for
any function $f:\TT^\ast\GG\longrightarrow\R$ and for any solution
$x:I\subset\R\longrightarrow\GG$ of the Hamiltonian equations, one
has
$$\frac{d}{d t} f(\gamma(t))=\{f,\mathcal{H}\sr\},$$where $\{\cdot,\cdot\}$ denotes Poisson bracket. In particular
$\dot{P_I}=\{P_I,\mathcal{H}\sr\}$  for every $I=1,...,n$.
Therefore, it follows that
$$\dot{P_I}=\{P_I,\frac{1}{2}\sum_{i\in I\cc}P_i^2(x,p)\}=
\frac{1}{2}\sum_{i\in I\cc}\{P_I,P_i\}P_i=-\sum_{i\in
I\cc}\sum_{\alpha\in I\vv} \SC_{Ii}^\alpha P_i P_\alpha.$$The last
computation yields \eqref{ngs}.

Below we shall discuss some other features of the normal geodesic
equations but let us first introduce the following (see
\cite{GoKA}, \cite{Montgomery}):
\begin{Defi}\label{abn}An {\bf abnormal curve} is a horizontal curve which is
the projection onto $\GG$ of an absolutely continuous curve in the
annihilator $\HH^\perp\subset\TT^\ast\GG$ of $\HH$, with square
integrable derivative, which does not intersect the zero section
of $\HH^\perp$ and whose derivative, whenever it exists, is in the
kernel of the canonical symplectic form restricted to $\HH^\perp$.
An {\bf abnormal minimizer} is an abnormal curve which is a
minimizer. A {\bf strictly abnormal curve} -resp. {\bf minimizer}-
is an abnormal curve  -resp. minimizer- which is not normal.
\end{Defi}

For a deep study on abnormal extremals and related problems in
sub-Riemannian geometry, see \cite{Ag1}, \cite{Ag2} and
references therein.

The equations for abnormal curves in our Carnot setting can
explicitly be derived along the lines of \cite{GoKA} (see also
\cite{Montgomery2, Montgomery}). More precisely, they are given by
\begin{eqnarray}\label{angs}\left\{\begin{array}{ll} \sum_{\alpha\in
I\vv}P_\alpha C\cc^\alpha x\cc=0\\ \dot{P\vv}=-\sum_{\alpha\in
I\vv}P_\alpha C^\alpha x\\\dot{x}\vv=0\\
P\cc=0.\end{array}\right.\end{eqnarray}Unlike the system
\eqref{ngs}, the equations of the system \eqref{angs} are mixed
algebraic-differential equations and they cannot be expressed as
O.D.E.'s. Note also that abnormal curves only depends on $\HH$ and
not on the metric.

We remind that, for any $x=\esp(\sum_Ix_I\ee_I)\in\GG$ we set
$x\cc:=(x_1,...,x_\DH)\in\R^{\DH}\cong\HH$ and
$x\vv:=(x_{\DH+1},...,x_n)\in\R^{\vd}\cong\VV$; see Notation
\ref{Janeiro}.

\begin{Defi}\label{NONONO}According to Definition \ref{nota}, we shall
set:
\begin{itemize}\item[(i)]$C\cc(P\vv):=\sum_{\alpha\in I\vv} P_\alpha
C\cc^\alpha$;\item[(ii)]$C(P\vv):=\sum_{\alpha\in I\vv} P_\alpha
C^\alpha$.\end{itemize}
\end{Defi}Using this notation, \eqref{ngs} and \eqref{angs} can be
rewritten more compactly as follows:
\begin{eqnarray}\label{ngs1}\mbox{(Normal Equations)}\left\{\begin{array}{ll}\dot{x}
=P\cc\\\dot{P}=- C(P\vv) P\cc;\end{array}\right.\end{eqnarray}

\begin{eqnarray}\label{angs}\mbox{(Abnormal Equations)}
\left\{\begin{array}{ll} C\cc(P\vv) x\cc=0\\ \dot{P\vv}=-C(P\vv) x\\\dot{x}\vv=0\\
P\cc=0.\end{array}\right.\end{eqnarray}

 In the next examples we shall write down the normal
CC-geodesic equations. Some of them will be studied in greater
detail in the sequel.

\begin{es}[Heisenberg group
$\mathbb{H}^1$]We get
\begin{eqnarray}\label{ngsh1}\left\{\begin{array}{ll}\dot{x}=P\cc\\\dot{P}\cc=-
C\cc(P_3) P\cc\\\dot{P}_3=0.\end{array}\right.\end{eqnarray}More
explicitly, one has $\dot{P}\cc=-P_3 \left|\!\!
\begin{array}{cc}
  0 & 1 \\
  -1 & 0 \\
\end{array}\!\!
\right|P\cc$, where $P\cc=\left|\!\!
\begin{array}{c}
  P_1\\
  P_2\\\end{array}\!\!
\right|$.
 \end{es}

\begin{es}[Heisenberg group
$\mathbb{H}^n$]We get
\begin{eqnarray}\label{ngsh1}\left\{\begin{array}{ll}\dot{x}=P\cc\\\dot{P}\cc=-
C\cc(P_{2n+1})
P\cc\\\dot{P}_{2n+1}=0.\end{array}\right.\end{eqnarray}We have
$$\dot{P}\cc=-P_{2n+1} \left|\!\!\!\!
\begin{array}{cccccccc}
  0 & 1 & 0 & 0 & \cdot &  \cdot &  \cdot &  0 \\
  -1 & 0 & 0 & 0 &  \cdot &  \cdot &  \cdot &  0 \\
  0 & 0 & 0 & 1 &  \cdot & \cdot & \cdot & 0 \\
  0 & 0 & -1 & 0 & \cdot & \cdot & \cdot & 0 \\
  \cdot & \cdot & \cdot & \cdot & \cdot & \cdot & \cdot & \cdot \\
  \cdot & \cdot & \cdot & \cdot & \cdot & \cdot & \cdot & \cdot \\
  0 & 0 & 0 & 0 & \cdot & \cdot & 0 & 1 \\
  0 & 0 & 0 & 0 & \cdot & \cdot & -1 & 0 \!\!\!\!
\end{array}%
\right|P\cc, \qquad P\cc=\left|\!\!
\begin{array}{c}
  P_{1}\\
  P_{2}\\P_{3}\\
  P_{4}\\\vdots\\P_{2n-1}\\
  P_{2n}\end{array}\!\!
\right|.$$
 \end{es}

\begin{es}[2-step Carnot groups]We get
\begin{eqnarray} \label{ngstep2}
\left\{\begin{array}{ll}\dot{x}=P\cc\\\dot{P}\cc=- C\cc(P\cd)
P\cc\\\dot{P}\cd=0,\end{array}\right.\end{eqnarray}where
$C\cc(P\cd)=\sum_{\alpha\in I\cd}P_\alpha C^\alpha\cc.$
\end{es}

\begin{es}[Engel group $\mathbb{E}^1$]We have \begin{eqnarray}\label{ngengel1}
\left\{\begin{array}{ll}\dot{x}=P\cc\\\dot{P}\cc=- \big(P_3 C\cc^3
+ P_4
C^4)P\cc\\\dot{P}_3=-P_4C^4P\cc\\
\dot{P}_4=0.\end{array}\right.\end{eqnarray}
 \end{es}

\begin{es}[3-step Carnot groups]For the general 3-step case
from \eqref{ngs1} we get
\begin{eqnarray}\label{ngstep3}\left\{\begin{array}{ll}\dot{x}=P\cc\\
\dot{P}\cc=- C\cc(P\cd) P\cc\\\dot{P}\cd=- C(P\ctr)
P\cc\\\dot{P}\ctr=0,\end{array}\right.\end{eqnarray}where
$P\cd:=\sum_{\alpha\in I\cd}P_\alpha X_\alpha$,
$P\ctr:=\sum_{\alpha\in I\ctr}P_\alpha X_\alpha$. We also remind
that
\[C\cc(P\cd)=\sum_{\alpha\in I\cd}P_\alpha C^\alpha\cc,\qquad C(P\ctr)=\sum_{\alpha\in I\ctr}P_\alpha
C^\alpha.\]
\end{es}

\subsection{CC-normal geodesics, variational formulae and Jacobi
fields}\label{ipo}

In this section we shall again discuss normal CC-geodesics and
their minimizing properties. We first recall some results which
can be found in \cite{Ver}.

If we look for minimizing geodesics joining two (fixed) points
$x,\,y\in\GG$, we are solving the minimization problem (with fixed
endpoints):
$$\min \int_a^b|\dot{x}\cc|dt,\quad \dot{x}(t)\in\HH_{x(t)}\,\,\forall\, t\in[a,b],\quad x=x(a),\,\,y=x(b).$$
\begin{no}If $Z\in\XG$, then $Z\cc,\,Z\vv$ denote the
orthogonal projection of $Z$ onto $\HH$ and $\VV$,
respectively.\end{no}

The {\it Euler-Lagrange equations} for this problem, written with
respect to\,the global left invariant frame
$\underline{X}=\{X_1,...,X_n\}$, are\footnote{Hereafter the
symbols $\flat$ and $\sharp$ will be used to denote the so-called
{\it musical isomorphisms} between vectors and co-vectors, defined
in an obvious way with the help of the metric; see \cite{Lee}. For
the definition of the ``contraction operator'' $\LL$, see footnote
\ref{fo4}.}
$$\frac{d}{dt}L_{\dot{x}}-L_{x}=\sum_{\alpha\in
I\vv}\big( \dot{P}_{\alpha}\omega_{\alpha} -
P_{\alpha}(\dot{x}\cc\LL d\omega_{\alpha})\big)^{\sharp}$$ where
$L(t,x,\dot{x})=|\dot{x}\cc|$ and $P_\alpha\,(\alpha\in I\vv)$
denote the $\alpha$-th {\it Lagrange multiplier}; see \cite{Ver}.
The horizontality for the curve $x$ is expressed by the equations
$\omega_\alpha(\dot{x})=0\,\,(\alpha\in I\vv)$. These equations
can be rewritten in invariant form by replacing the Lagrangian $L$
of the unconditional problem by $L\sr:=L + \sum_{\alpha\in
I\vv}P_\alpha\omega_{\alpha}(\dot{x}).$ Using the Levi-Civita
connection related to the fixed (left invariant) Riemannian metric
on $\GG$, the normal CC-geodesic equations are given (see
\cite{Ver}) by
\begin{eqnarray}\label{lnge}
\left\{\begin{array}{ll}\nabla_t\dot{x}\cc+\sum_{\alpha\in
I\vv}\big( \dot{P}_{\alpha}\omega_{\alpha} -
P_{\alpha}(\dot{x}\cc\LL
d\omega_{\alpha})\big)^{\sharp}=0\\\omega_\alpha(\dot{x})=0\quad
(\alpha\in I\vv).\end{array}\right.\end{eqnarray}Note that the
second equation is again the horizontality condition which can
equivalently be expressed by $\dot{x}\vv=0$. Setting
$P\vv:=\sum_{\alpha\in I\vv}P_\alpha X_\alpha$ and
$C(P\vv):=\sum_{\alpha\in I\vv}P_\alpha C^\alpha$, the system
\eqref{lnge} can be rewritten as\begin{eqnarray}\label{lnge2}
\left\{\begin{array}{ll}\nabla_t\dot{x}\cc+
\dot{P}\vv+C(P\vv)\dot{x}\cc=0\\\dot{x}\vv=0.\end{array}\right.\end{eqnarray}To
see this it is enough to express the right-hand side of the first
equation in \eqref{lnge} using formula \eqref{dext} for the
exterior derivative $d\omega_\alpha\,(\alpha\in I\vv)$. Note that
the Lagrangian multiplier $P\vv$ can be regarded as a curve in the
vertical subbundle $\VV$. {\it Obviously, \eqref{ngs1} and
\eqref{lnge2} coincide}. This immediately follows by explicitly
calculating the first equation in \eqref{lnge2}. More precisely,
as in the Riemannian case\footnote {See \cite{Ch1},
\cite{Helgason}, \cite{Hicks} for a classical setting, or
\cite{KRP} for a discussion of geodesics equations for
Nonholonomic geometries.}, it turns out that
$$\nabla_t\dot{x}\cc=\sum_{L=1}^n\Big(\frac{d^2x_L}{dt^2}
+ \sum_{i,j\in I\cc}\Gamma_{ij}^L \dot{x}_i\dot{x}_j\Big) X_L,$$
where $\Gamma_{ij}^L:=\langle\nabla_{X_i}X_j,X_L\rangle$ denote
Christoffel Symbols. Using formula \eqref{c2} together with
condition \eqref{chypc}  on the Carnot structure constants, we
 get that $\Gamma_{ij}^L=\frac{1}{2}\SC_{ij}^L=0$
whenever $L\in I\cc\,(i,j\in I\cc)$. Moreover, by skew-symmetry of
the structure constants $\SC_{ij}^L$, for every\footnote{Actually,
by using again \eqref{chypc}, $\SC_{ij}^L$ can be different from 0
only if $L$ belong to $I\cd$.} $L\in I\vv$ we get that
$\sum_{i,j\in I\cc}\Gamma_{ij}^L \dot{x}_i\dot{x}_j=0$. Therefore
$\ddot{x}\cc=\nabla_t\dot{x}\cc.$ Setting $P\cc:=\dot{x}\cc$ and
projecting \eqref{lnge2} onto $\HH$ and $\VV$, respectively,  one
gets\begin{eqnarray}\label{lnge23}
\left\{\begin{array}{ll}\dot{x}=\dot{x}\cc=P\cc\\\dot{x}\vv=0\\\dot{P}\cc=\ddot{x}\cc=\nabla_t\dot{x}\cc=-
C\cc(P\vv)P\cc\\\dot{P}\vv=-C(P\vv)P\cc\end{array}\right.\end{eqnarray}
which is the projected form of \eqref{ngs1}, as we wished to
prove.

As already  said in the previous discussion, the normal
CC-geodesic equations can be directly deduced by minimizing the
constrained Lagrangian
$$L\sr(t,x,\dot{x})=|\dot{x}\cc| + \langle P\vv,\dot{x}
\rangle=|\dot{x}\cc| +\sum_{\alpha\in I\vv} P_\alpha
\omega_\alpha(\dot{x}).$$This is equivalent to compute the
so-called {\it 1st variation} of the functional
$$I\sr(x):=\int_{a}^bL\sr(t,x,\dot{x})dt.$$ For this reason and in order to
 explicitly computing the {\it 2nd variation} of
$I\sr(x)$, we need preliminarily the next:
\begin{Defi}If $x:[a, b]\longrightarrow\GG$ is a smooth path, we
call {\bf variation of $x$} any smooth mapping
$\vartheta:[a,b]\times]-\epsilon_0,\epsilon_0[\longrightarrow\GG$
, $\epsilon_0>0$, for which $x(t)=\vartheta(t,0)$ for all
$t\in[a,b]$. We say that $\vartheta$ {\bf fixes endpoints} if
$x(a)=\vartheta(a,s),\,\,x(b)=\vartheta(b,s)$ for all
$s\in]-\epsilon_0, \epsilon_0[$. In this case we say that
$\vartheta$ is a {\bf homotopy of $x$}. Moreover, if for every
$s\in]-\epsilon_0,\epsilon_0[$ the path $\vartheta_s:[a,
b]\longrightarrow\GG,\, \vartheta_s(t)=\vartheta(t,s),$ is a
normal CC-geodesic, we say that $\vartheta$ is a {\bf
sub-Riemannian} or {\bf CC-geodesic variation of $x$}.
\end{Defi}
In the sequel we shall write $\partial_t, \partial_s$ for
$\vartheta_\ast(\partial_t), \vartheta_\ast(\partial_s)$
respectively, and denote covariant differentiation of vector
fields along $\vartheta$ with respect to $\partial_t, \partial_s$ by
$\nabla_t, \nabla_s$ respectively. Furthermore, let us recall the
following general identities:
\begin{eqnarray}\label{fi1}\nabla_s\partial_t\vartheta &=& \nabla_t\partial_s\vartheta\\\label{fi2}
\nabla_s\nabla_t-\nabla_t\nabla_s&=&R(\partial_t\vartheta,\partial_s\vartheta),\end{eqnarray}where
$\RC$ is the Riemannian curvature tensor of $\GG$.

\begin{Prop}[1{st} variation of $I\sr(x)$]\label{1varg}Let us assume that $x:[a, b]\longrightarrow\GG$ is a differentiable path
and let $\vartheta:[a,b]\times
]-\epsilon_0,\epsilon_0[\longrightarrow\GG$ be a differentiable
variation of $x$. Then \begin{eqnarray}\nonumber \delta^1
I\sr:=\frac{d}{ds}I\sr(\vartheta_s)\Big|_{s=0}=\Big\langle\partial_s\vartheta|_{s=0},
\Big(\frac{\partial_tx}{|\partial_tx|},P\vv\Big)\Big\rangle\Big|_a^b\\\label{ficfic}-\int_a^b\Big\{
\Big\langle\partial_s\vartheta|_{s=0},
\Big[\nabla_t\frac{\partial_tx}{|\partial_tx|} +
\partial_tP\vv+ C(P\vv)\partial_tx\Big]\Big\rangle-
\langle\partial_sP\vv|_{s=0},\partial_tx\rangle\Big\}dt.\end{eqnarray}
Let us assume that $|\partial_tx|=1$ and let $\vartheta$ be a
homotopy of $x$. Setting
\[Y(t):=\partial_s\vartheta(t,0),\quad Q\vv(t):=\partial_s P\vv(t,0)\]
we get that
\begin{eqnarray}\label{Kafka}\delta^1
I\sr=\int_a^b\Big\{ \langle Q\vv,\partial_tx\rangle- \Big\langle
Y, \Big[\nabla_t{\partial_tx}+
\partial_tP\vv+ C(P\vv)\partial_tx\Big]\Big\rangle
\Big\}dt.\end{eqnarray}
\end{Prop}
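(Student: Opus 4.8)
The plan is to differentiate $I\sr(\vartheta_s)=\int_a^b L\sr(t,\vartheta_s,\partial_t\vartheta_s)\,dt$ under the integral sign and then integrate by parts in $t$, treating separately the two summands of $L\sr=|\partial_t\vartheta\cc|+\sum_{\alpha\in I\vv}P_\alpha\,\omega_\alpha(\partial_t\vartheta)$. The only mechanism I would use is the commutation identity
\[\partial_s\big(\omega(\partial_t\vartheta)\big)-\partial_t\big(\omega(\partial_s\vartheta)\big)=d\omega(\partial_s\vartheta,\partial_t\vartheta),\]
valid for every left-invariant $1$-form $\omega$ on $\GG$: it is just the intrinsic formula for $d\omega$ evaluated on the commuting fields $\partial_t\vartheta,\partial_s\vartheta$, i.e. the co-frame counterpart of the symmetry lemma \eqref{fi1}. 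Apart from this, I would only need the explicit expression \eqref{dext} of $d\omega_R$ in terms of the structure constants, formula \eqref{c2}, the grading condition \eqref{chypc}, and Definition \ref{nota}.

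For the first summand, \eqref{dext} applied with $R\in I\cc$ gives $d\omega_i\equiv 0$ (the sum there runs over index pairs of total order $\mathrm{ord}(i)=1$ and is therefore empty), whence $\partial_s\big(\omega_i(\partial_t\vartheta)\big)=\partial_t\big(\omega_i(\partial_s\vartheta)\big)$ for $i\in I\cc$. Since $|\partial_t\vartheta\cc|^2=\sum_{i\in I\cc}\omega_i(\partial_t\vartheta)^2$, this yields $\partial_s|\partial_t\vartheta\cc|=\sum_{i\in I\cc}\frac{\omega_i(\partial_t\vartheta)}{|\partial_t\vartheta\cc|}\,\partial_t\big(\omega_i(\partial_s\vartheta)\big)$; integrating by parts in $t$ and then setting $s=0$ (where $\partial_t x$ is horizontal, so only its $I\cc$-components enter) produces the boundary term $\big\langle\partial_s\vartheta|_{s=0},\tfrac{\partial_t x}{|\partial_t x|}\big\rangle\big|_a^b$ and the interior term $-\int_a^b\big\langle\partial_s\vartheta|_{s=0},\nabla_t\tfrac{\partial_t x}{|\partial_t x|}\big\rangle\,dt$. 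The one identification to be made is that the naive $t$-derivative of the components of $\tfrac{\partial_t x}{|\partial_t x|}$ reproduces the components of the covariant derivative $\nabla_t\tfrac{\partial_t x}{|\partial_t x|}$: by \eqref{c2} and \eqref{chypc} the Christoffel symbols $\Gamma^L_{ij}$ with $i,j\in I\cc$ vanish for $L\in I\cc$ and are skew in $i,j$ for $L\in I\cd$, so they contract to $0$ against the symmetric expression $(\partial_t x)_i(\partial_t x)_j$ — exactly the computation already used above to show that \eqref{ngs1} and \eqref{lnge2} coincide.

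For the second summand, the commutation identity together with the Leibniz rule gives
\[\partial_s\Big(\sum_{\alpha}P_\alpha\,\omega_\alpha(\partial_t\vartheta)\Big)=\sum_{\alpha}(\partial_s P_\alpha)\,\omega_\alpha(\partial_t\vartheta)+\sum_{\alpha}P_\alpha\,d\omega_\alpha(\partial_s\vartheta,\partial_t\vartheta)+\sum_{\alpha}P_\alpha\,\partial_t\big(\omega_\alpha(\partial_s\vartheta)\big).\]
The first term equals $\langle\partial_s P\vv,\partial_t\vartheta\rangle$. For the second, substitute \eqref{dext}; using the skew-symmetry of $C^\alpha_{IJ}$ in $I,J$ one collapses the wedge products to
\[\sum_{\alpha}P_\alpha\,d\omega_\alpha(\partial_s\vartheta,\partial_t\vartheta)=-\sum_{I,J}\Big(\sum_{\alpha}P_\alpha C^\alpha_{IJ}\Big)\,\omega_I(\partial_s\vartheta)\,\omega_J(\partial_t\vartheta)=-\big\langle C(P\vv)\,\partial_t\vartheta,\partial_s\vartheta\big\rangle,\]
where $C(P\vv)=\sum_{\alpha\in I\vv}P_\alpha C^\alpha$ by Definition \ref{nota}(iv). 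For the third, integrate by parts in $t$: this produces the boundary term $\langle P\vv,\partial_s\vartheta\rangle\big|_a^b$ and the interior term $-\int_a^b\langle\partial_t P\vv,\partial_s\vartheta\rangle\,dt$, with $\partial_t P\vv:=\sum_{\alpha}\dot P_\alpha X_\alpha$.

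Summing the two contributions and evaluating at $s=0$, the boundary terms assemble into $\big\langle\partial_s\vartheta|_{s=0},\big(\tfrac{\partial_t x}{|\partial_t x|},P\vv\big)\big\rangle\big|_a^b$ and the interior terms into $-\int_a^b\big\{\big\langle\partial_s\vartheta|_{s=0},\nabla_t\tfrac{\partial_t x}{|\partial_t x|}+\partial_t P\vv+C(P\vv)\partial_t x\big\rangle-\langle\partial_s P\vv|_{s=0},\partial_t x\rangle\big\}\,dt$, which is \eqref{ficfic}. The second assertion \eqref{Kafka} is then immediate: imposing $|\partial_t x|=1$ replaces $\tfrac{\partial_t x}{|\partial_t x|}$ by $\partial_t x$ and $\nabla_t\tfrac{\partial_t x}{|\partial_t x|}$ by $\nabla_t\partial_t x$, while for a homotopy one has $\partial_s\vartheta(a,s)=\partial_s\vartheta(b,s)=0$ so the boundary term drops; writing $Y=\partial_s\vartheta(\cdot,0)$ and $Q\vv=\partial_s P\vv(\cdot,0)$ gives the stated formula. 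The step I expect to require the most care is the comparison, in the second paragraph, between the naive and the covariant $t$-derivative of the horizontal velocity; the manipulations involving $d\omega_\alpha$ and the structure constants are, by contrast, a purely mechanical application of \eqref{dext} and Definition \ref{nota}.
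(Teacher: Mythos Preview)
Your proof is correct and reaches exactly the same formula, but the route differs from the paper's in a meaningful way. The paper works entirely with the Levi-Civita connection: for $A_1=\partial_s\int_a^b|\partial_t\vartheta\cc|\,dt$ it simply quotes the Riemannian first-variation computation (\cite{Ch1}, Theorem 2.3), and for $A_2=\partial_s\int_a^b\langle P\vv,\partial_t\vartheta\rangle\,dt$ it expands with $\nabla_s$, invokes the symmetry lemma $\nabla_s\partial_t\vartheta=\nabla_t\partial_s\vartheta$, integrates by parts, and is left with the cross term $B=\langle\nabla_{\partial_s\vartheta}P\vv,\partial_t\vartheta\rangle-\langle\nabla_{\partial_t\vartheta}P\vv,\partial_s\vartheta\rangle$; this is then identified with $\langle[\partial_s\vartheta,\partial_t\vartheta],P\vv\rangle$ by appealing to a Koszul-type formula for the Levi-Civita connection, and only at the very end rewritten via the structure constants as $-\langle C(P\vv)\partial_t x,\partial_s\vartheta\rangle$.

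You instead bypass the connection for the constraint term and go directly through the left-invariant coframe: the single identity $\partial_s(\omega(\partial_t\vartheta))-\partial_t(\omega(\partial_s\vartheta))=d\omega(\partial_s\vartheta,\partial_t\vartheta)$, combined with the structure equation \eqref{dext}, delivers the $C(P\vv)$-term in one line and makes the closedness $d\omega_i=0$ for $i\in I\cc$ do the work that the symmetry lemma does in the paper. This is more self-contained (no external Riemannian citations) and better adapted to the Carnot setting, where \eqref{dext} is explicit; the paper's approach, on the other hand, keeps the calculation parallel to the classical Riemannian template and sets up the covariant-derivative machinery that is reused verbatim in the second-variation proof (Theorem \ref{sevate}). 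Your identification of the naive $t$-derivative of $\tfrac{\partial_t x}{|\partial_t x|}$ with its covariant derivative is exactly the Christoffel computation the paper records between \eqref{lnge2} and \eqref{lnge23}, so that step is already justified in the text.
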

 Note that \eqref{lnge2} immediately
follows from the previous result once we require that the path $x$ is an
extremal of the functional $I\sr(x)$. Indeed, in such a case, the
fist term under integral sign in \eqref{Kafka} must vanish  for every
$Y$, which gives the first equation  in \eqref{lnge2}. The same
procedure implies that the second term under integral sign vanishes  for every $Q\vv$, which in turn implies the horizontality
condition for $x$.

\begin{proof}[Proof of Proposition \ref{1varg}]First we have to compute
$$\partial_s\int_a^b\Big\{|\partial_t{\vartheta}\cc| + \langle P\vv,\partial_t{\vartheta}
\rangle\Big\}dt= \underbrace{
\partial_s\int_a^b|\partial_t{\vartheta}\cc|dt}_{=:A_1} +
\underbrace{\partial_s\int_a^b\langle P\vv,\partial_t{\vartheta}
\rangle dt}_{=:A_2}.$$Actually, the first term $A_1$ can be
directly deduced from the Riemannian computation (see, for
instance, \cite{Ch1}, Theorem 2.3) and, with our notation, we get
that
\begin{eqnarray*}A_1=\Big\langle\partial_s\vartheta,
\frac{\partial_tx}{|\partial_tx|}\Big\rangle\Big|_a^b-\int_a^b
\Big\langle\partial_s\vartheta,
\nabla_t\frac{\partial_tx}{|\partial_tx|} \Big\rangle
dt.\end{eqnarray*}So we have to compute the other term. We have
\begin{eqnarray*}A_2&=&\partial_s\int_a^b\langle P\vv,\partial_t{\vartheta}
\rangle dt=\int_a^b\partial_s\langle P\vv,\partial_t{\vartheta}
\rangle dt\\&=&\int_a^b\Big\{\langle\nabla_s
P\vv,\partial_t{\vartheta}\rangle + \langle
P\vv,\nabla_s\partial_t{\vartheta}\rangle\Big\}
dt\\&=&\int_a^b\Big\{\langle\partial_s P\vv +
\nabla_{\partial_s\vartheta}P\vv,\partial_t{\vartheta}\rangle +
\langle P\vv,\nabla_t\partial_s{\vartheta}\rangle\Big\}
dt\\&=&\int_a^b\Big\{\langle\partial_s P\vv
,\partial_t{\vartheta}\rangle +
\langle\nabla_{\partial_s\vartheta}P\vv
,\partial_t{\vartheta}\rangle+ \partial_t\langle
P\vv,\partial_s{\vartheta}\rangle-\langle \nabla_t
P\vv,\partial_s{\vartheta}\rangle\Big\}
dt\\&=&\int_a^b\Big\{\langle\partial_s P\vv
,\partial_t{\vartheta}\rangle +
\langle\nabla_{\partial_s\vartheta}P\vv
,\partial_t{\vartheta}\rangle+ \partial_t\langle
P\vv,\partial_s{\vartheta}\rangle-\langle
\partial_tP\vv+ \nabla_{\partial_t\vartheta}
P\vv,\partial_s{\vartheta}\rangle\Big\} dt\\&=&\langle
\partial_s{\vartheta},P\vv\rangle\big|_a^b+
\int_a^b\Big\{\langle\partial_s P\vv ,\partial_t{\vartheta}\rangle
-\langle\partial_tP\vv,\partial_s{\vartheta}\rangle+
\underbrace{\langle\nabla_{\partial_s\vartheta}P\vv
,\partial_t{\vartheta}\rangle-\langle \nabla_t
P\vv,\partial_s{\vartheta}\rangle}_{=:B}\Big\}dt.\end{eqnarray*}
We claim that $B=\langle[\partial_s\vartheta,
\partial_t{\vartheta}], P\vv\rangle$. To prove this claim we may proceed
by using a well-known formula for the Riemannian connection which
can be found in \cite{Ch1} (see  formula 1.29, p.15). This way we
get
\begin{eqnarray*}
B&=&\langle\nabla_{\partial_s\vartheta}P\vv
,\partial_t{\vartheta}\rangle-\langle \nabla_t
P\vv,\partial_s{\vartheta}\rangle\\&=&\frac{1}{2}\Big\{\langle[\partial_s\vartheta,
P\vv],\partial_t{\vartheta}\rangle-\langle[P\vv,\partial_t{\vartheta}],\partial_s{\vartheta}\rangle
+\langle[\partial_t{\vartheta},\partial_s{\vartheta}]\Big\}\\&-&\frac{1}{2}\Big\{\langle[\partial_t{\vartheta},
P\vv],\partial_s\vartheta\rangle-\langle[P\vv,\partial_s{\vartheta}],\partial_t{\vartheta}\rangle
+\langle[\partial_s{\vartheta}, \partial_t{\vartheta}],
P\vv\rangle\Big\}\\&=&\langle[\partial_s\vartheta,
\partial_t{\vartheta}], P\vv\rangle.
\end{eqnarray*}Moreover, it is easy to show that $\langle[\partial_s\vartheta,
\partial_t{\vartheta}], P\vv\rangle=-\langle
C(P\vv)\partial_tx,\partial_s\vartheta\rangle$. Indeed one has
$$\langle[\partial_s\vartheta,
\partial_t{\vartheta}], P\vv\rangle=\sum_{I, J=1}^n\sum_{\alpha\in I\vv}(\partial_s\vartheta)_I(\partial_t{\vartheta})_JP_\alpha\langle[X_I,
X_J], X_\alpha\rangle=\sum_{I, J=1}^n\sum_{\alpha\in
I\vv}\SC_{IJ}^\alpha(\partial_s\vartheta)_I(\partial_t{\vartheta})_JP_\alpha$$and
the claim immediately follows by using (ii) of Definition
\ref{NONONO}. So we have shown that
\begin{equation}\label{terminsec}A_2=\langle
\partial_s{\vartheta},P\vv\rangle\big|_a^b+
\int_a^b\Big\{\langle\partial_s P\vv ,\partial_t{\vartheta}\rangle
-\langle\big( \partial_t P\vv+
C(P\vv)\partial_t\vartheta\big),\partial_s\vartheta\rangle\Big\}dt.\end{equation}By
adding $A_1$ and $A_2$  \eqref{ficfic} claim easily follows.
Finally, \eqref{Kafka} follows from \eqref{ficfic}.
\end{proof}

\begin{teo}[2{nd}  Variation of $I\sr(x)$]\label{sevate}Under the notation of Proposition \ref{1varg} let  $x$ be a normal CC-geodesic satisfying $|\partial_tx|=1$,
and assume that $\vartheta$ is a homotopy of $x$. Moreover, set
$Y(t):=\partial_s\vartheta(t,0)$ and $Q\vv(t):=\partial_s
P\vv(t,0)$. Then we have

\begin{eqnarray}\nonumber\delta^2I\sr
=\int_a^b\Big\{2\Big\langle Q\vv, \Big[\nabla_tY
-\frac{3}{4}[Y,\partial_t x] -\frac{1}{4}
C(Y\vv)\partial_tx\Big]\Big\rangle\\\label{secvarfor}-\Big\langle
Y, \Big[ \nabla_t^{(2)}{Y\cc}+ C(P\vv)\big(\nabla_tY\cc + [Y,
\partial_tx]\big)+[Y,
\partial_tx]+\RC(
\partial_tx,Y)\partial_tx\Big]\Big\rangle\Big\}dt\end{eqnarray}

\end{teo}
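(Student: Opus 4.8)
The plan is to obtain $\delta^2 I\sr$ by differentiating the first-variation identity of Proposition \ref{1varg} once more in $s$ and then evaluating at $s=0$. The computation carried out in the proof of that proposition in fact yields, \emph{before} specialising to $s=0$, to unit speed, or to a homotopy,
\begin{equation*}
\frac{d}{ds}I\sr(\vartheta_s)=\Big\langle\partial_s\vartheta,\Big(\tfrac{\partial_t\vartheta\cc}{|\partial_t\vartheta\cc|},\,P\vv\Big)\Big\rangle\Big|_a^b+\int_a^b\Big\{\langle\partial_sP\vv,\partial_t\vartheta\rangle-\big\langle\partial_s\vartheta,\,\mathcal{E}(\vartheta_s)\big\rangle\Big\}\,dt,
\end{equation*}
where $\mathcal{E}(\vartheta_s):=\nabla_t\tfrac{\partial_t\vartheta\cc}{|\partial_t\vartheta\cc|}+\partial_tP\vv+C(P\vv)\partial_t\vartheta$ is the Euler--Lagrange operator of $L\sr$; note that system \eqref{lnge2} asserts precisely $\mathcal{E}(x)=0$. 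Since $\vartheta$ is a homotopy, $\partial_s\vartheta$ vanishes at $t=a,b$ for every $s$, so the boundary term drops out from now on.

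Differentiating the integrand once more in $s$ and setting $s=0$, one uses repeatedly the metric compatibility of the left-invariant Levi--Civita connection $\nabla$, the symmetry $\nabla_s\partial_t\vartheta=\nabla_t\partial_s\vartheta$ of \eqref{fi1}, and the curvature identity \eqref{fi2}, which lets one trade $\nabla_s\nabla_t$ for $\nabla_t\nabla_s+\RC(\partial_t\vartheta,\partial_s\vartheta)$ and is what produces the term $\RC(\partial_tx,Y)\partial_tx$ in \eqref{secvarfor}. There are exactly two terms carrying a \emph{second-order} variation field, and both die: the terms multiplying $\nabla_s\partial_s\vartheta$ regroup into $-\langle\nabla_s\partial_s\vartheta,\mathcal{E}(x)\rangle$, which vanishes because $x$ is a normal CC-geodesic; and the genuinely second-order contribution of $\partial_s\langle\partial_sP\vv,\partial_t\vartheta\rangle$, namely a second $s$-derivative of the vertical field $P\vv$ paired with $\partial_tx=P\cc$, vanishes since $\partial_tx$ is horizontal (its vertical components $\omega_\alpha(\partial_tx)$, $\alpha\in I\vv$, are the horizontality constraint, hence zero). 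What is left is a quadratic form in $Y=\partial_s\vartheta|_{s=0}$ and $Q\vv=\partial_sP\vv|_{s=0}$.

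The remaining first-order contributions split into an arc-length part and a genuinely sub-Riemannian part. The arc-length part, coming from $\partial_s$ of $\nabla_t\tfrac{\partial_t\vartheta\cc}{|\partial_t\vartheta\cc|}$, is handled by the classical second-variation-of-length argument (cf.\ \cite{Ch1}), simplified here by $|\partial_tx|=1$ and $\langle\nabla_t\partial_tx,\partial_tx\rangle=0$ but carried out for the \emph{horizontal projection} of the velocity in the adapted orthonormal frame $\underline{X}$; this is why the outcome features $\nabla_t^{(2)}{Y\cc}$ and not $\nabla_t^{(2)}Y$. The sub-Riemannian part comes from differentiating $C(P\vv)\partial_t\vartheta$ and the Koszul commutator term $\langle[\partial_s\vartheta,\partial_t\vartheta],P\vv\rangle$ that appeared in the proof of Proposition \ref{1varg}. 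Since $Z\mapsto C(Z)$ is $\R$-linear with constant coefficients, and $\langle C(Z)V,W\rangle$ coincides with the bracket pairing established there (via Lemma \ref{twist1} and Definition \ref{NONONO}), the $s$-derivative splits into a part where $\partial_s$ hits $P\vv$ and a part where it hits the velocity or the frame vectors; re-expressing everything through the bracket identity redistributes these between the $Y$-quadratic terms $C(P\vv)(\nabla_tY\cc+[Y,\partial_tx])$, $[Y,\partial_tx]$ and the $Q\vv$-linear terms $[Y,\partial_tx]$, $C(Y\vv)\partial_tx$. The half-integer coefficients $\tfrac{3}{4},\tfrac{1}{4}$ in \eqref{secvarfor} arise as bookkeeping artefacts of the $\tfrac{1}{2}$'s in the Koszul formula and in \eqref{c2}.

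Finally, one integration by parts in $t$ (boundary terms again vanish since $\vartheta$ is a homotopy) moves a $\nabla_t$ off the term $\partial_tQ\vv$ produced by $\partial_s\partial_tP\vv$ --- this supplies the second copy of $\langle Q\vv,\nabla_tY\rangle$, giving it coefficient $2$ --- and then one regroups all contributions according to whether they pair with $Y$ or with $Q\vv$, obtaining \eqref{secvarfor}. The hard part is precisely this bookkeeping: tracking simultaneously the $s$-derivative of $C(P\vv)\partial_t\vartheta$, the Koszul commutator, and the Christoffel corrections coming from the non-covariant derivatives $\partial_tP\vv,\partial_sP\vv$, and checking that everything reassembles to exactly the coefficients displayed in \eqref{secvarfor}; the curvature term, the cancellation of the second-order pieces, and the arc-length term are, by contrast, routine.
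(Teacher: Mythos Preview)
Your outline is correct and follows essentially the same route as the paper: differentiate the first-variation identity in $s$, kill the second-order pieces via the geodesic equation and horizontality of $\partial_tx$, commute $\nabla_s\nabla_t$ using \eqref{fi2} to produce the curvature term, and then integrate by parts in $t$ to reorganise the $Q\vv$-terms.

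That said, the two places you label ``bookkeeping'' are in fact the substantive content of the proof, and your sketch does not make them precise. First, the bracket $[Y,\partial_tx]$ appearing both standalone and inside $C(P\vv)(\cdot)$ in \eqref{secvarfor} does \emph{not} come from the Koszul commutator of Proposition~\ref{1varg}; it comes from the failure of horizontal projection to commute with covariant differentiation. Concretely, one needs the identity
\[
\nabla_s(\partial_t\vartheta)\cc=\nabla_t(\partial_s\vartheta)\cc+[\partial_s\vartheta,(\partial_t\vartheta)\cc],
\]
proved by expanding in the frame $\underline{X}$ and using that $\SC^i_{IJ}=0$ for $i\in I\cc$ (condition~\eqref{chypc}). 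This is what forces $\nabla_t^{(2)}Y\cc$ rather than $\nabla_t^{(2)}Y$ and simultaneously generates the extra bracket; you allude to ``horizontal projection'' but do not identify the mechanism. Second, the coefficients $\tfrac34,\tfrac14$ arise not from generic Koszul $\tfrac12$'s but from the specific identity
\[
\langle\nabla_{\partial_tx}Q\vv,Y\rangle=\tfrac12\big(\langle C(Y\vv)Q\vv,\partial_tx\rangle+\langle C(Q\vv)\partial_tx,Y\rangle\big),
\]
obtained from \eqref{c2} together with the vanishing $\SC^i_{\alpha J}=0$ for $i\in I\cc$, $\alpha\in I\vv$; combining this with $\langle C(Q\vv)\partial_tx,Y\rangle=-\langle[\partial_tx,Y],Q\vv\rangle$ and the integration by parts you describe is what splits the $Q\vv$-integrand into $\nabla_tY-\tfrac34[Y,\partial_tx]-\tfrac14 C(Y\vv)\partial_tx$. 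Without these two identities stated, the argument is incomplete.
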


\begin{proof}Since, by hypothesis, $x$ is a normal CC-geodesic and
$\vartheta$ is a homotopy of $x$, we may start by considering the
following identity:
\begin{eqnarray}\label{KaKa}\frac{d}{ds}I\sr(\vartheta_s)=-\int_a^b\Big\{ \Big\langle \partial_s\vartheta, \Big[\nabla_t{\partial_t\vartheta\cc}+
\partial_tP\vv+ C(P\vv)\partial_t\vartheta\cc\Big]\Big\rangle-
\langle
\partial_sP\vv,\partial_t\vartheta\rangle\Big\}dt.\end{eqnarray}
This identity can easily be deduced by the proof of Proposition
\ref{1varg}. So we have
\begin{eqnarray*}\frac{d^2}{ds^2}I\sr(\vartheta_s)
&=&-\partial_s\int_a^b\Big\{ \Big\langle \partial_s\vartheta,
\Big[\nabla_t{\partial_t\vartheta\cc}+
\partial_tP\vv+ C(P\vv)\partial_t\vartheta\cc\Big]\Big\rangle-
\langle
\partial_sP\vv,\partial_t\vartheta\rangle\Big\}dt\\&=&-\int_a^b\Big\{ \partial_s\Big\langle \partial_s\vartheta,
\Big[\nabla_t{\partial_t\vartheta\cc}+
\partial_tP\vv+ C(P\vv)\partial_t\vartheta\cc\Big]\Big\rangle-
\partial_s\langle
\partial_sP\vv,\partial_t\vartheta\rangle\Big\}dt\\&=&-\int_a^b\Big\{
 \underbrace{\Big\langle \nabla_s\partial_s\vartheta,
\Big[\nabla_t{\partial_t\vartheta\cc}+
\partial_tP\vv+ C(P\vv)\partial_t\vartheta\cc\Big]\Big\rangle}_{=:A_1}
\\&&+\underbrace{\Big\langle \partial_s\vartheta,
\nabla_s\Big[\nabla_t{\partial_t\vartheta\cc}+
\partial_tP\vv+ C(P\vv)\partial_t\vartheta\cc\Big]\Big\rangle}_{=:A_2}-\underbrace{
\partial_s\langle
\partial_sP\vv,\partial_t\vartheta\rangle}_{=:A_3}
\Big\}dt.\end{eqnarray*}Now it is obvious that, at $s=0$, one has
$A_1=0$ because $x$ is assumed to be a normal
CC-geodesic.\\\\\noindent{\bf Step 1.}\,{{(Computation of
$A_2$)}}\, {\it We have
}\,\,\begin{eqnarray*}\nabla_s\big(\nabla_t{\partial_t\vartheta\cc}+
\partial_tP\vv+ C(P\vv)\partial_t\vartheta\cc\big)\big|_{s=0}&=&
\nabla_t^{(2)}{Y\cc}+[Y, \partial_tx]+\RC(
\partial_tx,Y)\partial_tx+
\nabla_tQ\vv \\&& + C(Q\vv)\partial_tx+
C(P\vv)\big(\nabla_tY\cc + [Y,
\partial_tx]\big).\end{eqnarray*}
\begin{proof}This computation
 generalizes the classical deduction of Jacobi's
equation. We have
\begin{eqnarray*}B:=\nabla_s\big(\nabla_t{\partial_t\vartheta\cc}+
\partial_tP\vv+ C(P\vv)\partial_t\vartheta\cc\big)=
\underbrace{\nabla_s\nabla_t{\partial_t\vartheta\cc}}_{=:B_1}+
\underbrace{\nabla_s\partial_tP\vv}_{=:B_2}+
\underbrace{\nabla_s\big(C(P\vv)\partial_t\vartheta\cc\big)}_{=:B_3}.\end{eqnarray*}
The first term $B_1$ can be computed in analogy with the
Riemannian case; see \cite{DC}, p.111. More precisely, by
\eqref{fi2} we
get$$B_1=\nabla_s\nabla_t{(\partial_t\vartheta)\cc}=
 \nabla_t\nabla_s{(\partial_t\vartheta)\cc}-\RC(\partial_s\vartheta,
  \partial_t\vartheta)(\partial_t\vartheta)\cc.$$
At this point we have to compute the term
$\nabla_s(\partial_t\vartheta)\cc$. Note that this must be done in
a different way with respect to the Riemannian case because the term
$\partial_t\vartheta$ is not horizontal, a priori, and so it may
be $\partial_t\vartheta\neq
\partial_t\vartheta\cc$. We claim that
\begin{eqnarray}\label{vigao}\nabla_s(\partial_t\vartheta)\cc=\nabla_t(\partial_s\vartheta)\cc
+ [\partial_s\vartheta, (\partial_t\vartheta)\cc].\end{eqnarray}
Indeed we have
\begin{eqnarray*}\nabla_s(\partial_t\vartheta)\cc&=&
\nabla_s(\PH(\partial_t\vartheta))=\partial_s(\PH(\partial_t\vartheta))
+\nabla_{\partial_s\vartheta}\PH(\partial_t\vartheta)\\&=&\partial_t(\PH(\partial_s\vartheta))
+ \sum_{j\in
I\cc}\sum_{I=1}^n(\partial_t\vartheta)_j(\partial_s\vartheta)_I(\nabla_{X_I}X_j)|_\vartheta\\
&=&\partial_t(\PH(\partial_s\vartheta)) + \sum_{j\in
I\cc}\sum_{I=1}^n(\partial_t\vartheta)_j(\partial_s\vartheta)_I\Big(\nabla_{X_j}X_I
+ \sum_{\alpha\in I\vv}\SC_{Ij}^\alpha
X_\alpha\Big)\Big|_\vartheta\\&=&\nabla_t\PH(\partial_s\vartheta)
+ \sum_{\alpha\in I\vv}\Big(\langle
C^\alpha\PH(\partial_t\vartheta),\partial_s\vartheta\rangle
X_\alpha\Big)\Big|_\vartheta\\&=&\nabla_t(\partial_s\vartheta)\cc
+ \sum_{\alpha\in I\vv}\langle
C^\alpha(\partial_t\vartheta)\cc,\partial_s\vartheta\rangle
X_\alpha(\vartheta)\\&=&\nabla_t(\partial_s\vartheta)\cc +
[\partial_s\vartheta, (\partial_t\vartheta)\cc],\end{eqnarray*}and
\eqref{vigao} follows. Therefore we obtain
\begin{eqnarray}\label{B_1}B_1=\nabla_s\nabla_t{(\partial_t\vartheta)\cc}=
\nabla_t\nabla_t{(\partial_s\vartheta)\cc}+[\partial_s\vartheta,
(\partial_t\vartheta)\cc]+\RC(
\partial_t\vartheta,\partial_s\vartheta)(\partial_t\vartheta)\cc.
\end{eqnarray}

\begin{oss}
In what above, we have used the identity
\[\partial_s(\PH(\partial_t\vartheta))=
\partial_t(\PH(\partial_s\vartheta)).\]In order to prove this identity, we may
argue componentwise. More precisely, for $i\in I\cc$ we compute

\begin{eqnarray*}\partial_s\langle\partial_t\vartheta,X_i\rangle&=&
\langle\partial_s\partial_t\vartheta,X_i\rangle+
\langle\partial_t\vartheta,\partial_sX_i\rangle=
\langle\partial_s\partial_t\vartheta,X_i\rangle+
\langle\partial_t\vartheta,\mathcal{J}
X_i\partial_s\vartheta\rangle\\
\partial_t\langle\partial_s\vartheta,X_i\rangle&=&
\langle\partial_t\partial_s\vartheta,X_i\rangle+
\langle\partial_s\vartheta,\partial_tX_i\rangle=
\langle\partial_t\partial_s\vartheta,X_i\rangle+
\langle\partial_s\vartheta,\mathcal{J}
X_i\partial_t\vartheta\rangle.
\end{eqnarray*}By subtracting the second identity from the first
one, we easily get that
\begin{eqnarray*}A_i:=\partial_s\langle\partial_t\vartheta,X_i\rangle-
\partial_t\langle\partial_s\vartheta,X_i\rangle=\langle\partial_t\vartheta,\big[\mathcal{J}
X_i-(\mathcal{J}X_i)^{\rm
Tr}\big]\partial_s\vartheta\rangle\rangle.
\end{eqnarray*}From the last relation we infer that $$A_i=\sum_{I,J=1}^n\langle\big[\mathcal{J}
X_i-(\mathcal{J}X_i)^{\rm
Tr}\big]X_I,X_J\rangle(\partial_s\vartheta)_I(\partial_t\vartheta)_J.$$Now
since
\begin{eqnarray*}\langle\mathcal{J}X_iX_I,X_J\rangle&=&-\langle\nabla_{X_I}X_J,X_i\rangle,\\\langle(\mathcal{J}X_i)^{\rm
Tr}X_I,X_J\rangle&=&\langle(\mathcal{J}X_i)^{\rm
Tr}X_J,X_I\rangle=-\langle\nabla_{X_J}X_I,X_i\rangle,\end{eqnarray*}
we get that
$$A_i=\sum_{I,J=1}^n\langle\nabla_{X_J}X_I-
\nabla_{X_I}X_J\rangle(\partial_s\vartheta)_I(\partial_t\vartheta)_J=\sum_{I,J=1}^n\underbrace
{\SC_{JI}^i}_{=0\,\mbox{\tiny{by
\eqref{chypc}}}}(\partial_s\vartheta)_I(\partial_t\vartheta)_J=0$$for
every $i\in I\cc$, which implies the claim. \end{oss}

\begin{oss}\label{ossavate}If the variation
$\vartheta$ is a CC-geodesic variation of $x$, the term $B_1$ can
be computed as in the Riemannian case; see \cite{DC}, p.111.
Indeed, in such a case we have
$\partial_t\vartheta=\PH(\partial_t\vartheta)$  for every $s\in
]-\epsilon_0, \epsilon_0[$. Using \eqref{fi1} and \eqref{fi2}
yields
\begin{eqnarray}\nonumber B_1&=&\nabla_s\nabla_t{
(\partial_t\vartheta)\cc}=\nabla_s\nabla_t{(\partial_t\vartheta)}
=\nabla_t\nabla_s{(\partial_t\vartheta)}-\RC(\partial_s\vartheta,
  \partial_t\vartheta)\partial_t\vartheta)\\&=&
\nabla_t\nabla_t{(\partial_s\vartheta)}+\RC(\partial_t\vartheta,
  \partial_s\vartheta)\partial_t\vartheta.\end{eqnarray}This can also be
   deduced from \eqref{B_1}, by noting that
  \[[\partial_s\vartheta,
(\partial_t\vartheta)\cc]=[\partial_s\vartheta,
\partial_t\vartheta]=\vartheta_\ast[\partial_s,\partial_t]=0,\]
because $[\partial_s,\partial_t]=0$.\end{oss} \noindent The term
$B_2$ can be computed by means of \eqref{fi1} and we immediately
get
that\begin{eqnarray}\label{B_2}B_2=\nabla_s\partial_tP\vv=\nabla_t\partial_sP\vv.
\end{eqnarray}
\noindent Analogously, the term $B_3$ can be computed by means of
\eqref{fi1}, \eqref{fi2} and \eqref{vigao}, as
follows:\begin{eqnarray*}\label{B_3}B_3&=&\nabla_s(C(P\vv)\partial_t
\vartheta\cc)=\nabla_s\Big(\sum_{\alpha\in I\vv}P_\alpha
C^\alpha\partial_t \vartheta\cc\Big)=\sum_{\alpha\in
I\vv}\big(\partial_sP_\alpha C^\alpha\partial_t\vartheta\cc +
P_\alpha C^\alpha\nabla_s\partial_t\vartheta\cc
\big)\\&=&C(\partial_sP\vv)\partial_t\vartheta\cc+
C(P\vv)\nabla_s\partial_t\vartheta\cc=C(\partial_sP\vv)\partial_t\vartheta\cc
+ C(P\vv)\big(\nabla_t(\partial_s\vartheta)\cc +
[\partial_s\vartheta, (\partial_t\vartheta)\cc]\big).
\end{eqnarray*}
Finally, by adding the terms $B_1, B_2$ and $B_3$, we get
\begin{eqnarray*}B&=&\nabla_t\nabla_t{(\partial_s\vartheta)\cc}+[\partial_s\vartheta,
(\partial_t\vartheta)\cc]+\RC(
\partial_t\vartheta,\partial_s\vartheta)(\partial_t\vartheta)\cc +
\nabla_t\partial_sP\vv\\ &&+
C(\partial_sP\vv)\partial_t\vartheta\cc +
C(P\vv)\big(\nabla_t(\partial_s\vartheta)\cc +
[\partial_s\vartheta, (\partial_t\vartheta)\cc]\big).
\end{eqnarray*}
The thesis  follows by substituting
$Y(t)=\partial_s\vartheta(t,0)$ and $Q\vv(t)=\partial_s P\vv(t,0)$
into the last expression.\end{proof} \noindent{\bf Step 2.}\,{\it
We have}\,\,$\big\{\int_a^b A_3 dt\big\}\big|_{s=0}= -\int_a^b
\big\langle Y, \big[
\partial_t Q\vv + C(Q\vv)\partial_t x\big]\big\rangle dt.$ \begin{proof}By arguing as in Proposition \ref{1varg},
 we get that
\begin{eqnarray*}\int_a^b A_3 dt=\langle \partial_sP\vv, \partial_s\vartheta\rangle\big|_a^b + \int_a^b
\Big\{\Big\langle\frac{\partial^2 P\vv}{\partial
s^2},\partial_t\vartheta\Big\rangle -\Big\langle
\partial_s\vartheta, \big[ C(\partial_sP\vv)\partial_t x +
\partial_t \partial_sP\vv\big]\Big\rangle\Big\}dt.\end{eqnarray*}
Now we consider this expression at $s=0$ and we make the
substitutions: $\partial_t\vartheta|_{s=0}=\partial_tx$,
$\partial_s\vartheta|_{s=0}=Y$ and $\partial_sP\vv|_{s=0}=Q\vv$.
By hypothesis, $\vartheta$ is a homotopy. Hence
$$\langle Q\vv, Y\rangle\big|_a^b=\langle \partial_sP\vv|_{s=0},
\partial_s\vartheta|_{s=0}\rangle\big|_a^b=0.$$Set $s=0$. Then $\big\langle\frac{\partial^2 P\vv}{\partial
s^2},\partial_t\vartheta\big\rangle\big|_{s=0}=\big\langle\frac{\partial^2
P\vv}{\partial s^2}\big|_{s=0},\partial_tx\big\rangle=0$ and the
thesis follows.
\end{proof}\noindent Using  Step 1 and Step 2, we get that\begin{eqnarray*}\delta^2I\sr=\frac{d^2}{ds^2}I\sr(\vartheta_s)\Big|_{s=0}
=-\int_a^b\Big\{\Big\langle Y, \Big( \nabla_t^{(2)}{Y\cc}+[Y,
\partial_tx]+\RC(
\partial_tx,Y)\partial_tx \\+
\nabla_tQ\vv + C(Q\vv)\partial_tx+ C(P\vv)\big(\nabla_tY\cc +
[Y,
\partial_tx]\big)+\big(\partial_t Q\vv+
C(Q\vv)\partial_t x
\big)\Big)\Big\rangle\Big\}dt.\end{eqnarray*}Since
$\nabla_tQ\vv=\partial_tQ\vv + \nabla_{\partial_t x}Q\vv$, it
follows that
\begin{eqnarray*}\delta^2I\sr &=&-\int_a^b\Big\{\Big\langle Y,
\Big[ \nabla_t^{(2)}{Y\cc}+[Y,
\partial_tx]+\RC(
\partial_tx,Y)\partial_tx \\&&+
2\Big(\nabla_tQ\vv + C(Q\vv)\partial_tx
-\frac{1}{2}\nabla_{\partial_tx}Q\vv\Big)+
C(P\vv)\big(\nabla_tY\cc + [Y,
\partial_tx]\big)\Big]\Big\rangle\Big\}dt.\end{eqnarray*}By
reordering a bit the last expression we easily get that
\begin{eqnarray}\nonumber\delta^2I\sr
&=&-\int_a^b\Big\{\Big\langle Y, \Big[ \nabla_t^{(2)}{Y\cc}+
C(P\vv)\big(\nabla_tY\cc + [Y,
\partial_tx]\big)+[Y,
\partial_tx]+\RC(
\partial_tx,Y)\partial_tx\Big]\Big\rangle\\\label{presecvaruni}&&+2\Big\langle Y, \Big[\nabla_tQ\vv
+ C(Q\vv)\partial_tx-\frac{1}{2}\nabla_{\partial_tx}Q\vv
\Big]\Big\rangle\Big\}dt.\end{eqnarray}

\noindent{\bf Step 3.}\,{\it The following identities hold
true:}\begin{itemize}\item[{\it
(i)}]$\langle\nabla_{\partial_tx}Q\vv, Y\rangle=\frac{1}{2}\big(
\langle C(Y\vv)Q\vv,\partial_tx\rangle + \langle
C(Q\vv)\partial_tx, Y\rangle\big)$;\item[{\it (ii)}]$\langle
C(Q\vv)\partial_t x,Y\rangle=-\langle[\partial_t x,Y],
Q\vv\rangle$.\end{itemize}

\begin{proof}To prove the first identity we compute
\begin{eqnarray*}
\langle\nabla_{\partial_tx}Q\vv, Y\rangle&=&\sum_{i\in
I\cc}\sum_{\alpha\in I\vv}\sum_{J=1}^n(\partial_t x)_iQ_\alpha Y_J
\langle\nabla_{X_i}X_\alpha,
X_J\rangle\\&=&\frac{1}{2}\,\sum_{i\in I\cc}\sum_{\alpha\in
I\vv}\sum_{J=1}^n(\partial_t x)_iQ_\alpha
Y_J\big(\SC_{i\alpha}^J-\underbrace{\SC_{\alpha
J}^i}_{=0\,\mbox{\rm \tiny by \eqref{chypc}}} +
\SC_{Ji}^{\alpha}\big)\\&=&\frac{1}{2}\Big( \langle
C(Y\vv)Q\vv,\partial_tx\rangle + \langle C(Q\vv)\partial_tx,
Y\rangle\Big),
\end{eqnarray*}which proves (i), while (ii) follows from Definition \ref{NONONO} and Lemma \ref{twist1}.
\end{proof}\noindent Using Step 3 and integrating by parts, yields
\begin{eqnarray*}&&\int_a^b \Big\langle Y, \Big[\nabla_tQ\vv
+ C(Q\vv)\partial_tx-\frac{1}{2}\nabla_{\partial_tx}Q\vv
\Big]\Big\rangle dt\\&=&\underbrace{\langle Y,
Q\vv\rangle\big|_a^b}_{=0}+\int_a^b \Big\{-\langle \nabla_tY,Q\vv
\rangle -\langle[\partial_t x,Y], Q\vv\rangle
-\frac{1}{4}\big[\langle C(Y\vv)Q\vv,\partial_tx\rangle + \langle
C(Q\vv)\partial_tx,
Y\rangle\big]\Big\}dt\end{eqnarray*}\begin{eqnarray*}&=&\int_a^b
\Big\{-\langle \nabla_tY,Q\vv \rangle
-\frac{3}{4}\langle[\partial_t x,Y], Q\vv\rangle
-\frac{1}{4}\langle C(Y\vv)Q\vv,\partial_tx\rangle
\Big\}dt\\&=&-\int_a^b \Big\{\langle \nabla_tY,Q\vv \rangle
+\frac{3}{4}\langle[\partial_t x,Y], Q\vv\rangle
-\frac{1}{4}\langle C(Y\vv)\partial_tx,Q\vv\rangle
\Big\}dt\\&=&-\int_a^b \Big\langle Q\vv, \Big[\nabla_tY
+\frac{3}{4}[\partial_t x,Y] -\frac{1}{4}
C(Y\vv)\partial_tx\Big]\Big\rangle dt.\end{eqnarray*}Finally, by
using the last expression and \eqref{presecvaruni}, we get that

\begin{eqnarray*}\delta^2I\sr
&=&-\int_a^b\Big\{\Big\langle Y, \Big[ \nabla_t^{(2)}{Y\cc}+
C(P\vv)\big(\nabla_tY\cc + [Y,
\partial_tx]\big)+[Y,
\partial_tx]+\RC(
\partial_tx,Y)\partial_tx\Big]\Big\rangle\\&&-2\Big\langle Q\vv, \Big[\nabla_tY
-\frac{3}{4}[Y,\partial_t x] -\frac{1}{4}
C(Y\vv)\partial_tx\Big]\Big\rangle\Big\}dt,\end{eqnarray*}which is
equivalent to the thesis.
\end{proof}

Accordingly with the Hamiltonian theory already discussed in
Section \ref{onnorm}, we set $P\cc:=\partial_t x$. Starting from
the second variation formula \eqref{secvarfor}, it is natural to
consider the following system of O.D.E.'s:
\begin{eqnarray}\label{lnge}
\left\{\begin{array}{ll}\nabla_t^{(2)}{Y\cc}+
C(P\vv)\big(\nabla_tY\cc + [Y, P\cc]\big)+[Y,
 P\cc]+\RC(
 P\cc,Y) P\cc=0\\\\
\P\vv\big(\nabla_tY -\frac{3}{4}[Y,P\cc] -\frac{1}{4} C(Y\vv)
P\cc\big)=0.\end{array}\right.\end{eqnarray}

\begin{corollario}[2{nd} derivative of $I\sr(x)$ through CC-geodesic variations]\label{2sevate}Under the notation of Proposition \ref{1varg} let  $x$ be a normal CC-geodesic satisfying $|\partial_tx|=1$,
and let us assume that:\begin{itemize}\item[(i)]$\vartheta$ is a
homotopy of $\,x$;\item[(ii)] $\vartheta$ is a CC-geodesic
variation of $x$.\end{itemize} Moreover, set
$Y(t):=\partial_s\vartheta(t,0)$ and $Q\vv(t):=\partial_s
P\vv(t,0)$. Then \begin{eqnarray}\nonumber\delta^2I\sr
=\int_a^b\Big\{\Big\langle Q\vv, \Big( \nabla_tY +
[\partial_tx,Y]\Big) \Big\rangle-\Big\langle Y, \Big(
\nabla_t^{(2)}{Y}+\RC(
\partial_tx,Y)\partial_tx +
C(P\vv)\nabla_tY\Big)\Big\rangle\Big\}dt.\\\label{coseva}
\end{eqnarray}
\end{corollario}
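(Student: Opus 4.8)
The plan is to specialize the general second--variation formula \eqref{secvarfor} of Theorem \ref{sevate} to a CC--geodesic variation, exploiting the two extra hypotheses: each slice $\vartheta_s$ is a normal CC--geodesic and $\vartheta$ fixes the endpoints.

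First I would use hypothesis (ii). By Remark \ref{ossavate}, for a CC--geodesic variation one has $[\partial_s\vartheta,\partial_t\vartheta]=\vartheta_\ast[\partial_s,\partial_t]=0$ identically in $s$; evaluating at $s=0$ gives $[Y,\partial_t x]=0$. Substituting this into \eqref{secvarfor} annihilates the term $-\tfrac34[Y,\partial_t x]$ in the $Q\vv$--bracket and both the term $C(P\vv)[Y,\partial_t x]$ and the bare $[Y,\partial_t x]$ in the $Y$--bracket, leaving
\[\delta^2I\sr=\int_a^b\Big\{2\big\langle Q\vv,\nabla_tY-\tfrac14 C(Y\vv)\partial_t x\big\rangle-\big\langle Y,\nabla_t^{(2)}Y\cc+C(P\vv)\nabla_tY\cc+\RC(\partial_t x,Y)\partial_t x\big\rangle\Big\}dt.\]

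The remaining task is to drop the horizontal projections on $Y\cc$ and to collapse the $Q\vv$--terms. For this I would use that each $\vartheta_s$ is horizontal, so $\partial_t\vartheta=\PH\partial_t\vartheta$ for every $s$, and that it solves the full normal--geodesic system, so that $\partial_tP\vv=-C(P\vv)\partial_t\vartheta$ and $\nabla_t\partial_t\vartheta+\partial_tP\vv+C(P\vv)\partial_t\vartheta=0$ hold for every $s$. Differentiating these relations (and the horizontality $\langle\partial_t\vartheta,X_\alpha\rangle=0$, $\alpha\in I\vv$) with respect to $s$ at $s=0$ — using \eqref{fi1}, \eqref{vigao}, formula \eqref{c2}, and, crucially, the order condition \eqref{chypc} on the structural constants — one checks that the vertical component $Y\vv$ enters $\nabla_t^{(2)}Y$ and $C(P\vv)\nabla_tY$ only through combinations of the $\SC^R_{IJ}$ that \eqref{chypc} forces to vanish; hence $\big\langle Y,\nabla_t^{(2)}Y\cc+C(P\vv)\nabla_tY\cc\big\rangle$ may be replaced by $\big\langle Y,\nabla_t^{(2)}Y+C(P\vv)\nabla_tY\big\rangle$. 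For the $Q\vv$--terms I would invoke the two identities established in Step 3 of the proof of Theorem \ref{sevate} — namely $\langle\nabla_{\partial_t x}Q\vv,Y\rangle=\tfrac12\big(\langle C(Y\vv)Q\vv,\partial_t x\rangle+\langle C(Q\vv)\partial_t x,Y\rangle\big)$ and $\langle C(Q\vv)\partial_t x,Y\rangle=-\langle[\partial_t x,Y],Q\vv\rangle$ — together with Lemma \ref{twist1}, and then integrate by parts in $t$; every boundary contribution vanishes because $\vartheta$ is a homotopy, i.e. $Y(a)=Y(b)=0$ and $Q\vv(a)=Q\vv(b)=0$. Collecting the surviving terms gives \eqref{coseva}; the displayed term $[\partial_t x,Y]$ is itself $0$ under these hypotheses and is retained only to parallel \eqref{secvarfor}.

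The hard part is precisely this last step of bookkeeping: verifying that the factor--$2$ expression $2\big\langle Q\vv,\nabla_tY-\tfrac14 C(Y\vv)\partial_t x\big\rangle$ collapses, after integration by parts and use of the slice equations, to $\langle Q\vv,\nabla_tY+[\partial_t x,Y]\rangle$, and that the vertical part of $Y$ drops out of the second--order and $C(P\vv)$ terms. Both rely on the normal--geodesic equations for the slices differentiated in $s$ and on the systematic use of \eqref{chypc} together with the skew--symmetry of the structural constants — the same mechanism already operating in the proof of Theorem \ref{sevate}, now streamlined by the identities $[Y,\partial_t x]=0$ and $\PH\partial_t\vartheta=\partial_t\vartheta$.
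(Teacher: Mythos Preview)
Your first step is wrong: the bracket $[Y,\partial_t x]$ that appears in \eqref{secvarfor} is the \emph{pointwise Lie-algebra bracket}, namely $\sum_{I,j,\alpha}Y_I(\partial_t x)_j\,\SC^\alpha_{Ij}X_\alpha$, and this is generically nonzero even for a CC-geodesic variation. Remark \ref{ossavate} concerns the Lie bracket of the \emph{vector fields} $\vartheta_\ast\partial_s$ and $\vartheta_\ast\partial_t$ on the image of $\vartheta$ (which vanishes by push-forward), not the pointwise algebraic bracket of their values; the two notions coincide for left-invariant fields but not for arbitrary fields along a map. Indeed the term $[\partial_t x,Y]$ survives in \eqref{coseva} precisely because it is nonzero: in the paper's derivation it comes from $\langle Y,C(Q\vv)\partial_t x\rangle=-\langle[\partial_t x,Y],Q\vv\rangle$ after integration by parts. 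Once this first step fails, the subsequent ``bookkeeping''---collapsing the factor $2$, replacing $Y\cc$ by $Y$---has no firm starting point, and your justification for dropping the vertical part of $Y$ in $\nabla_t^{(2)}Y\cc$ and $C(P\vv)\nabla_tY\cc$ via \eqref{chypc} is not correct in general (the vertical components of $Y$ contribute nontrivially to these expressions through the Christoffel symbols).

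The paper does not specialize \eqref{secvarfor} at all; it redoes the second-variation computation from the first-variation identity, exploiting that $\partial_t\vartheta=(\partial_t\vartheta)\cc$ holds for \emph{every} $s$, not just $s=0$. Two simplifications follow immediately: the term $\langle\partial_sP\vv,\partial_t\vartheta\rangle$ in the first variation vanishes identically in $s$ (orthogonality of $\HH$ and $\VV$), so there is no ``$A_3$'' contribution and hence no factor $2$; and $\nabla_s\partial_t\vartheta=\nabla_t\partial_s\vartheta$ by \eqref{fi1} applies directly without the projection/bracket correction \eqref{vigao}, so $B_1=\nabla_t^{(2)}Y+\RC(\partial_t x,Y)\partial_t x$ and $B_3=C(Q\vv)\partial_t x+C(P\vv)\nabla_tY$ emerge with the full $Y$ from the outset. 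One then integrates $\langle Y,\nabla_tQ\vv+C(Q\vv)\partial_t x\rangle$ by parts (the boundary term dies since $Y(a)=Y(b)=0$) to produce $-\langle Q\vv,\nabla_tY+[\partial_t x,Y]\rangle$, and \eqref{coseva} follows.
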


\begin{proof}The proof follows the same lines of the general case.
Since, by hypothesis, $x$ is a normal CC-geodesic and the
CC-geodesic variation $\vartheta$ is also a homotopy of $x$, we
may start by considering the identity
\begin{eqnarray}\label{KaKa}\frac{d}{ds}I\sr(\vartheta_s)=-\int_a^b\Big\{ \Big\langle \partial_s\vartheta, \Big[\nabla_t{\partial_t\vartheta}+
\partial_tP\vv+ C(P\vv)\partial_t\vartheta\Big]\Big\rangle\Big\}dt.\end{eqnarray}
Indeed note that $\partial_t\vartheta=\partial_t\vartheta\cc$ for
every $(t,s)\in[a,b]\times ]-\epsilon_0,\epsilon_0[$. This
identity can easily be deduced, as in the proof of Proposition
\ref{1varg},  by using the hypotheses on $x$ and $\vartheta$.
Hence
\begin{eqnarray*}\frac{d^2}{ds^2}I\sr(\vartheta_s)
&=&-\partial_s\int_a^b\Big\{ \Big\langle \partial_s\vartheta,
\Big[\nabla_t{\partial_t\vartheta}+
\partial_tP\vv+ C(P\vv)\partial_t\vartheta\Big]\Big\rangle\Big\}dt
\\&=&-\int_a^b\Big\{ \partial_s\Big\langle \partial_s\vartheta,
\Big[\nabla_t{\partial_t\vartheta}+
\partial_tP\vv+ C(P\vv)\partial_t\vartheta\Big]\Big\rangle\Big\}dt\\&=&-\int_a^b\Big\{
 \underbrace{\Big\langle \nabla_s\partial_s\vartheta,
\Big[\nabla_t{\partial_t\vartheta}+
\partial_tP\vv+ C(P\vv)\partial_t\vartheta\Big]\Big\rangle}_{=:A_1}
\\&&+\underbrace{\Big\langle \partial_s\vartheta,
\nabla_s\Big[\nabla_t{\partial_t\vartheta}+
\partial_tP\vv+ C(P\vv)\partial_t\vartheta\Big]\Big\rangle}_{=:A_2}
\Big\}dt.\end{eqnarray*}Now it is obvious that, at $s=0$, one has
$A_1=0$, because $x$ is a normal CC-geodesic. So we need to prove
the following fact:\\\noindent{\bf Step 1.}\,{{(Computation of
$A_2$)}}\, {\it One has
}\,\,\begin{eqnarray*}\nabla_s\big(\nabla_t{\partial_t\vartheta}+
\partial_tP\vv+ C(P\vv)\partial_t\vartheta\big)\big|_{s=0}\\=
\nabla_t^{(2)}{Y}+\RC(
\partial_tx,Y)\partial_tx +
\nabla_tQ\vv + C(Q\vv)\partial_tx+
C(P\vv)\nabla_tY.\end{eqnarray*}
\begin{proof}The proof mimics that of
Theorem \ref{sevate}. We have

\begin{eqnarray*}B:=\nabla_s\big(\nabla_t{\partial_t\vartheta}+
\partial_tP\vv+ C(P\vv)\partial_t\vartheta\big)=
\underbrace{\nabla_s\nabla_t{\partial_t\vartheta}}_{=:B_1}+
\underbrace{\nabla_s\partial_tP\vv}_{=:B_2}+
\underbrace{\nabla_s\big(C(P\vv)\partial_t\vartheta\big)}_{=:B_3}.\end{eqnarray*}
The first term $B_1$ can be computed exactly as in the Riemannian
case (see \cite{DC}, p.111 or the previous Remark \ref{ossavate}).
More precisely, by using \eqref{fi1} and \eqref{fi2}, we get
$$B_1=\nabla_s\nabla_t{(\partial_t\vartheta)}=
 \nabla_t\nabla_s{(\partial_t\vartheta)}-\RC(\partial_s\vartheta,
  \partial_t\vartheta)(\partial_t\vartheta).$$
Also $B_2$  can be computed exactly as in the previous proof of
Theorem \ref{sevate} (see Step 1) and we get
that\begin{eqnarray*}B_2=\nabla_s\partial_tP\vv=\nabla_t\partial_sP\vv.
\end{eqnarray*}Analogously, for the term $B_3$ we get
\begin{eqnarray*}B_3&=&\nabla_s(C(P\vv)\partial_t
\vartheta)=\nabla_s\Big(\sum_{\alpha\in I\vv}P_\alpha
C^\alpha\partial_t \vartheta\Big)=\sum_{\alpha\in
I\vv}\big(\partial_sP_\alpha C^\alpha\partial_t\vartheta +
P_\alpha C^\alpha\nabla_s\partial_t\vartheta
\big)\\&=&C(\partial_sP\vv)\partial_t\vartheta+
C(P\vv)\nabla_s\partial_t\vartheta=C(\partial_sP\vv)\partial_t\vartheta
+ C(P\vv)\nabla_t(\partial_s\vartheta).
\end{eqnarray*}
Finally, by adding the terms $B_1, B_2$ and $B_3$ we have
\begin{eqnarray*}B=\nabla_t\nabla_t{(\partial_s\vartheta)}+\RC(
\partial_t\vartheta,\partial_s\vartheta)(\partial_t\vartheta) +
\nabla_t\partial_sP\vv + C(\partial_sP\vv)\partial_t\vartheta +
C(P\vv)\nabla_t(\partial_s\vartheta) .
\end{eqnarray*}
The thesis follows by substituting $Y(t)=\partial_s\vartheta(t,0)$
and $Q\vv(t)=\partial_s P\vv(t,0)$ into the last
expression.\end{proof}\noindent By what proved in Step 1 we get
that\begin{eqnarray*}\delta^2I\sr&=&\frac{d^2}{ds^2}I\sr(\vartheta_s)\Big|_{s=0}
\\&=&-\int_a^b\Big\{\Big\langle Y, \Big( \nabla_t^{(2)}{Y}+\RC(
\partial_tx,Y)\partial_tx +
\nabla_tQ\vv + C(Q\vv)\partial_tx+
C(P\vv)\nabla_tY\Big)\Big\rangle\Big\}dt.\end{eqnarray*}Now, by
arguing again as in proof of Theorem \ref{sevate}, we obtain
\begin{eqnarray*}
\int_a^b\big\langle Y, \big(\nabla_tQ\vv +
C(Q\vv)\partial_tx\rangle dt=\langle Y , Q\vv\rangle|_{a}^b-
\int_a^b\big\langle\big( \nabla_tY + [\partial_tx,Y]\big), Q\vv
\big\rangle dt\end{eqnarray*}and since $\vartheta$ is a homotopy
of $x$ we have $\langle Y , Q\vv\rangle|_{a}^b=0$. Putting all
together, \eqref{coseva} follows.\end{proof}\noindent As before
(see \eqref{lnge}), by setting $P\cc:=\partial_t x$ and using
\eqref{coseva}, we deduce the following O.D.E.'s system:
\begin{eqnarray}\label{lnge22}
\left\{\begin{array}{ll}\nabla_t^{(2)}{Y}+\RC( P\cc,Y)P\cc +
C(P\vv)\nabla_tY=0\\\\
\P\vv(\nabla_tY -[Y,P\cc]) =0.\end{array}\right.\end{eqnarray}

\begin{Defi}[Jacobi
equations for normal CC-geodesics]\label{JE}We say that
\eqref{lnge22} represents the system of Jacobi equations for
normal CC-geodesics. Let $x:[0, a]\longrightarrow\GG$ be a normal
CC-geodesic satisfying $|\partial_tx|=1$. Then we say that a
vector field $J\in\XX(\GG)$ is a Jacobi field along the normal
CC-geodesic $x$ if and only if $J$ satisfies \eqref{lnge22} for
all $t\in[0, a]$.
\end{Defi}

\begin{oss}
An interesting corollary of the second variation formula can be
formulated by considering only some particular CC-geodesic
variations. Indeed, we could consider variations of a given normal
CC-geodesic $x$, or, more precisely $(x,P\vv)$, through normal
CC-geodesics $\vartheta_s$ such that, for every $s\in
]-\epsilon_0, \epsilon_0[$ one has $\,Q\vv=0$.  Roughly speaking,
the variation $\vartheta$ is chosen in such a way that
$\vartheta_s$ satisfies \eqref{lnge2} for each $s\in ]-\epsilon_0,
\epsilon_0[$, with the same $P\vv$.  By making this substitution
into \eqref{coseva} we obtain the following formula:\begin{eqnarray*}\delta^2I\sr
=-\int_a^b\big\langle Y, \big( \nabla_t^{(2)}{Y}+\RC(
\partial_tx,Y)\partial_tx +
C(P\vv)\nabla_tY\big)\big\rangle dt.\end{eqnarray*}We therefore obtain the following 2nd order linear system of
O.D.E.'s:\begin{eqnarray}\label{lnge3} \nabla_t^{(2)}{Y}+\RC(
P\cc,Y)P\cc + C(P\vv)\nabla_tY=0.\end{eqnarray}
\end{oss}

\begin{Defi}[Jacobi
equations with constant Lagrangian multiplier $P\vv$]We say that
\eqref{lnge3} represent the system of Jacobi equations for normal
CC-geodesics having the same Lagrangian multiplier $P\vv$.
Furthermore, let $x:[0, a]\longrightarrow\GG$ be a normal
CC-geodesic satisfying $|\partial_tx|=1$ and having Lagrangian
multiplier $P\vv$. Then we say that a vector field $J\in\XX(\GG)$
is a Jacobi field along the normal CC-geodesic $(x, P\vv)$ if, and
only if, $J$ satisfies \eqref{lnge3} for all $t\in[0, a]$.
\end{Defi}

As in the classical setting, a Jacobi field is uniquely determined
by its initial conditions: $J(0),\, {\nabla_tJ}(0)$. To see this,
it is sufficient to develop either \eqref{lnge22} or \eqref{lnge3}
along the left invariant frame $\underline{X}=\{X_1,...,X_n\}$ by
using the very definition of $\nabla_t$ and the linearity of the
curvature tensor $R(\dot{x}, \cdot)\dot{x}$ (remind that
$\dot{x}=\dot{x}\cc=P\cc(x)$). Indeed, one easily deduce that
\eqref{lnge3} is a linear system of O.D.E.'s of the 2nd order.
Hence, for given initial conditions $J(0),\, {\nabla_tJ}(0)$,
there exists a $\cin$ solution of the system defined on $[0, a]$.
Thus there exist $2n$ linearly independent Jacobi fields along
$(x, P\vv)$.

\begin{oss}As in Riemannian Geometry,  Jacobi equations for normal CC-geodesics and Jacobi
equations with constant Lagrangian multiplier $P\vv$ -together
with the related notions of Jacobi fields- are important tools and
they can be used to analyze the sub-Riemannian exponential map and
to perform a precise study of the conjugate and cut loci of a
point. Nevertheless, we will not pursue this task here.
\end{oss}

\subsection{Sub-Riemannian exponential map  and CC-spheres}\label{ipoo}

The main references for this section are \cite{Rifford},
\cite{RiffordTrelat}, \cite{Ver}.

Starting from the system \eqref{ngs1} for normal CC-geodesics,
from the standard O.D.E.'s  theory we easily obtain existence,
uniqueness, regularity and smooth dependence on the initial data
for small times. So let $x_0\in \GG$ be a fixed point. Then we
shall denote by
$$\exp\sr(x_0,P_0):[0,r]\subset\R\longrightarrow\GG, \qquad
\exp\sr(x_0,P_0)(t)$$ the (unique) normal CC-geodesic starting
from $x_0$ with initial condition $P(0)=P_0$ for some fixed vector
$P_0=P\cc(0) + P\vv(0)\in \HH_{x_0}\oplus\VV_{x_0}$. Here
$r<r_{\rm max}(P_0)$ where $r_{\rm max}(P_0)$ is the maximal time
of existence and uniqueness of the solution
$x(t)=\exp\sr(x_0,P_0)(t)$ of \eqref{ngs1}. Actually, by standard
``extendibility'' results for O.D.E.'s, it can be shown that {\it
each solution of system \eqref{ngs1} is globally defined}
\footnote{More generally, in the setting of sub-Riemannian
manifolds, it can be shown that each solution of the system of
normal CC-geodesic can be continued as long as $x(t)$ remains in
the base manifold, so blow-up in the dual variable $P$ never
occurs. This claim can easily be proved using the
following:\begin{lemma}[\cite{Stric}] Let $x(t)$ be any normal
CC-geodesic defined for $t\in[0,r[$ and assume that $x(t)$ remains
inside a compact subset of the base manifold. Then $x(t)$ can be
extended beyond $t=r$.
\end{lemma}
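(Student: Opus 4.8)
The plan is to view the normal CC-geodesic $x(t)$ as the base projection of an integral curve $t\mapsto(x(t),P(t))$ of the Hamiltonian system \eqref{ngs1} on the phase space $\TT^\ast\GG$, to prove that for $t\in[0,r[$ this lifted curve converges in $\TT^\ast\GG$ as $t\to r^-$, and then to invoke standard local existence and uniqueness for the smooth ODE defined by the Hamiltonian vector field $X_{\mathcal{H}\sr}$. Indeed, once one produces a limit point $(x(r),P(r))\in\TT^\ast\GG$ of $(x(t),P(t))$ as $t\to r^-$, solving $X_{\mathcal{H}\sr}$ with initial datum $(x(r),P(r))$ yields a solution on some interval $(r-\varepsilon,r+\varepsilon)$, which by uniqueness coincides with $(x(t),P(t))$ on $(r-\varepsilon,r)$ and therefore prolongs $x$ past $t=r$.

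First I would control the horizontal part of the momentum. Since $\mathcal{H}\sr$ is a first integral of its own flow, $2\mathcal{H}\sr(x(t),P(t))=\sum_{i\in I\cc}P_i(t)^2=|P\cc(t)|^2$ is constant on $[0,r[$, and it equals the squared speed of $x$. In particular $\dot x=P\cc$ has constant $g$-norm, so in local coordinates covering the trace of $x$ on $[0,r[$ — which is relatively compact by hypothesis — the curve $x(t)$ is Lipschitz, hence converges to a point $x(r)$ of the base manifold as $t\to r^-$.

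Next I would bound the vertical part of the momentum by a Gr\"onwall argument. The second equation in \eqref{ngs1} reads $\dot P=-C(P\vv)P\cc$, and the entries of $C(P\vv)=\sum_{\alpha\in I\vv}P_\alpha C^\alpha$ depend linearly on $P\vv$ through the structural constants $\SC^\alpha_{IJ}$ — genuine constants for a Carnot group, and functions bounded on the compact trace of $x$ in the general sub-Riemannian setting. Hence $|\dot P\vv(t)|\le\Lambda\,|P\cc(t)|\,|P\vv(t)|$, and since $|P\cc(t)|$ is constant this is $\le\Lambda'\,|P\vv(t)|$; Gr\"onwall's inequality then gives $|P\vv(t)|\le|P\vv(0)|\,e^{\Lambda' t}$, bounded on $[0,r[$. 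Consequently $P=(P\cc,P\vv)$ is bounded on $[0,r[$, so $\dot P=-C(P\vv)P\cc$ is bounded, $P(t)$ is Lipschitz, and $P(t)\to P(r)$ as $t\to r^-$. Since the components $P_I(t)=P(t)\big(X_I(x(t))\big)$ determine the covector via the smooth invertible frame matrix $\big[(X_I(x))_J\big]_{I,J}$ and $x(t)\to x(r)$, the lifted curve converges to $(x(r),P(r))$ in $\TT^\ast\GG$, and the extension step of the first paragraph applies.

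The only genuine obstacle is the non-blow-up of the dual variable $P$. The naive estimate for the coupled system $\dot P\cc=-C\cc(P\vv)P\cc$, $\dot P\vv=-C(P\vv)P\cc$ is quadratic and a priori allows finite-time blow-up of $P\vv$; it is exactly the conservation of $\mathcal{H}\sr$ — which freezes $|P\cc|$ and thereby makes the differential inequality for $|P\vv|$ linear — that prevents this. The remaining ingredients (Lipschitz estimates, passage to the limit, and local existence/uniqueness for a smooth vector field) are routine.
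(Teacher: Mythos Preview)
Your argument is correct. The paper itself does not supply a proof of this lemma; it simply refers the reader to Strichartz \cite{Stric}, Lemma~4.1. What you have written is essentially the standard argument and almost certainly the one in \cite{Stric}: conservation of $\mathcal{H}\sr$ fixes $|P\cc|$, which both guarantees convergence of $x(t)$ and linearizes the differential inequality for $|P\vv|$, so Gr\"onwall and the smooth dependence of the Hamiltonian vector field finish the job.
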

\begin{proof}The proof can be found in \cite{Stric}, Lemma 4.1.
\end{proof}This result can be used to canonically
define a sub-Riemannian exponential map; see \cite{Stric}.} {\it
on} $\GG$, i.e. $r_{\rm max}(P_0)=+\infty$.

In order to define a sub-Riemannian equivalent to the ordinary
exponential map we preliminarily state the following:
\begin{lemma}[Homogeneity]If the normal
CC-geodesic $x(t)=\exp\sr(x_0,P_0)(t)$ is defined on the interval
$[0,r[$, then the normal CC-geodesic $\exp\sr(x_0, a  P_0)(t)$,
$a>0,$ is defined on the interval $[0,\frac{r}{a}[$ and it turns
out that
$$\exp\sr(x_0,a P_0)(t)=\exp\sr(x_0,P_0)(a t)\qquad \mbox{for every}\,\,t\in \Big[0,\frac{r}{a}\Big[.$$\end{lemma}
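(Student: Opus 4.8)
The plan is to exploit the scaling invariance of the normal CC-geodesic equations \eqref{ngs1}. First I would record the elementary observation that the system $\dot{x}=P\cc$, $\dot{P}=-C(P\vv)P\cc$ is invariant under the rescaling $(x(t),P(t))\longmapsto(x(at),\,aP(at))$, the decisive point being that $Z\longmapsto C(Z)$ is \emph{linear} in $Z$ (Definition \ref{NONONO}). Indeed, if $(x,P)$ solves \eqref{ngs1} on $[0,r[$ and one puts $\widetilde{x}(t):=x(at)$, $\widetilde{P}(t):=aP(at)$ for $t\in[0,r/a[$, then $\dot{\widetilde{x}}(t)=a\dot{x}(at)=aP\cc(at)=\widetilde{P}\cc(t)$, while $\dot{\widetilde{P}}(t)=a^2\dot{P}(at)=-a^2\,C(P\vv(at))\,P\cc(at)=-C(aP\vv(at))\,(aP\cc(at))=-C(\widetilde{P}\vv(t))\,\widetilde{P}\cc(t)$, where the linearity of $C(\cdot)$ is used to move one factor $a$ inside the argument. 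Hence $(\widetilde{x},\widetilde{P})$ again solves \eqref{ngs1}, and it is defined exactly on $[0,r/a[$, that being the set of $t$ with $at\in[0,r[$.

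Next I would match initial data: $\widetilde{x}(0)=x(0)=x_0$ and $\widetilde{P}(0)=aP(0)=aP_0$. Since by definition $x(t)=\exp\sr(x_0,P_0)(t)$ is the \emph{unique} solution of \eqref{ngs1} with datum $(x_0,P_0)$, the uniqueness part of the Cauchy theory for O.D.E.'s forces $(\widetilde{x},\widetilde{P})$ to coincide with the unique solution with datum $(x_0,aP_0)$, i.e. $\widetilde{x}(t)=\exp\sr(x_0,aP_0)(t)$ on the common interval of definition. Unwinding the definition of $\widetilde{x}$ then yields precisely $\exp\sr(x_0,aP_0)(t)=x(at)=\exp\sr(x_0,P_0)(at)$ for all $t\in[0,r/a[$, which is the asserted identity. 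For the statement on intervals I would argue that $\widetilde{x}$ is a priori defined on $[0,r/a[$, so $\exp\sr(x_0,aP_0)$ extends at least that far, while applying the same rescaling with $a$ replaced by $1/a$ and initial momentum $aP_0$ shows that any extension of $\exp\sr(x_0,aP_0)$ beyond $r/a$ would produce an extension of $\exp\sr(x_0,P_0)$ beyond $r$; so $[0,r/a[$ is optimal.

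I do not foresee a genuine obstacle. The single point that must not be skipped is the appeal to the $\R$-linearity of $C(\cdot)$ in the $P\vv$-slot: it is exactly this that makes the quadratic right-hand side of the $\dot{P}$-equation scale with the factor $a^2$ needed to match $\dot{\widetilde{P}}=a^2\dot{P}$. As a side remark, by the footnote following \eqref{ngs1} every solution of the normal equations is in any case globally defined on $\GG$, so in the present Carnot setting the content of the lemma reduces to the reparametrization identity $t\mapsto at$.
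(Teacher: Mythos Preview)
Your proof is correct and follows essentially the same route as the paper: define the rescaled pair $(\widetilde{x}(t),\widetilde{P}(t))=(x(at),aP(at))$, verify by direct computation (using the linearity of $C(\cdot)$) that it again solves \eqref{ngs1} with initial datum $(x_0,aP_0)$, and invoke uniqueness. Your additional remarks on the optimality of the interval and on global existence are not in the paper's proof but are sound observations.
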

\begin{proof}Let $(y,Q):[0,\frac{r}{a}[\longrightarrow\TT\GG$ be the
curve given by $(y(t),Q(t))=(x(at),aP(at))$. Now we claim that
$(y,Q)$ satisfies \eqref{ngs1} with $y(0)=x_0$ and
$Q_0=Q(0)=aP_0$. Indeed, by the very definition of $x(t)$, we get
that $\dot{y}(t)=a\dot{x}(at)=aP\cc(at)$ and  that
$$\dot{Q}(t)=a^2\dot{P}(at)=-a^2C(P\vv(at))P\cc(at)=-C(Q\vv(t))Q\cc(t)$$
for  $t\in[0,\frac{r}{a}[$ and the claim follows. So by uniqueness
we get, in particular, that $$y(t)=\exp\sr(x_0, a
P_0)(t)=x(at)=\exp\sr(x_0, P_0)(at)\qquad \mbox{for every}\,\,
t\in\Big[0,\frac{r}{a}\Big[  ,$$and the thesis follows.
\end{proof}

\begin{Defi}[Sub-Riemannian exponential map]
From now on we shall set
$$\exp\sr(x_0,\cdot):\TT
_{x_0}\GG\longrightarrow\GG,\qquad\exp\sr(x_0,P_0)=\exp\sr(x_0,P_0)(1).$$The
map $\exp\sr(x_0,\cdot)$ is called the {\bf sub-Riemannian
exponential map} at $x_0\in\GG$.
 \end{Defi} The sub-Riemannian
exponential map parameterizes normal CC-geodesics. Note that every
minimizing curve connecting $x_0$ to a point of $\GG\setminus
\exp(x_0,\TT _{x_0}\GG)$ is necessarily strictly abnormal.

The map $\exp\sr(x_0,\cdot)$ plays in sub-Riemannian geometry a
similar role with respect to the ordinary exponential map in
Riemannian geometry. Nevertheless, there are many differences and
its structure is much more complicated. An important difference is
that $\exp\sr(x_0,\cdot)$ is not a diffeomorphism on any
neighborhood of the origin in $\HH_{x_0}\oplus\VV_{x_0}$. To see
this it is enough to choose $P_0\in\VV_{x_0}$ (i.e. $P\cc(0)=0$);
in this case  $\exp\sr(x_0,P_0)=x_0$. Furthermore, in any
arbitrarily small neighborhood of the origin in $\HH _{x_0}
\oplus\VV _{x_0}$ there are points $P_0=P\cc(0) + P\vv(0)$ with
$P\cc(0)\neq 0$ at which the rank of $d \exp\sr(x_0,P_0)$ is not
maximal. We shall discuss some of these facts later on.

We begin by stating the notion of sub-Riemannian wave front.

Let $\UH$ denote the set of all unit horizontal vector of $\HH$,
i.e. $\UH\subset\HH\cong\mathbb{S}^{\DH-1}$ and define the map
$$\widehat{\cdot}:\HH\setminus\{0\}\oplus\VV\longrightarrow\UH\oplus
\VV\qquad\widehat{P}=\frac{P}{|P\cc|}.$$The {\it sub-Riemannian
wave front} $W\sr(x_0,r)$ centered at $x_0$ and with radius $r>0$
is defined as the set of points
$x(r)=\exp\sr(x_0,r\widehat{P_0})=\exp\sr(x_0,\widehat{P_0})(r)$
for $P_0\in\HH_{x_0}\oplus\VV_{x_0}$. In other words
$$W\sr(x_0,r)=\exp\sr(x_0,r(\UH_{x_0}\oplus\VV_{x_0})).$$Note that in the
Riemannian setting the wave front simply coincides with the
$r$-sphere. In our case only the inclusion
$\mathbb{S}^n\sr(x_0,r)\varsubsetneq W\sr(x_0,r)$ holds true.
Actually the structure of $W\sr(x_0,r)$ is  very complicated and,
in general, the sub-Riemannian wave fronts are not manifolds.

\begin{Defi}[Conjugate locus] \label{conj}A point $x\in\exp\sr(x_0, \TT_{x_0}\GG)$ is
said {\bf conjugate to }$x_0$ if and only if it is a critical
value of $\exp\sr(x_0, \cdot)$, i.e. there exists
$P_0\in\HH_{x_0}$ such that $x=\exp\sr(x_0,P_0)$ and
$d\exp\sr(x_0,P_0)$ is not onto. The {\bf conjugate locus}
$\mathrm{Conj}(x_0)$ of $x_0$ is then the set of all points
conjugate to $x_0$.
\end{Defi}
Notice that, from what we have said above, it turns out that
$x_0\in\mathrm{Conj}(x_0)$. Moreover, by Sard Theorem applied to
$\exp\sr(x_0,\cdot)$ one gets that $\mathrm{Conj}(x_0)$ is a set
of (Lebesgue) measure zero in $\GG$.

\begin{Defi}[Cut locus] \label{cutl}Let us fix $x_0\in \GG$
and let us choose a normal CC-geodesic $x(t)=\exp\sr(x_0,P_0)(t)$.
If $t>0$ is sufficiently small, $\dc(x(0),x(t))=t$, i.e.
$x([0,t])$ is a minimizing normal CC-geodesic. Moreover, if for
some $t_1$ $x([0,t_1])$ is not minimizing the same is true for all
$t>t_1$. By continuity, the set of numbers $t>0$ such that
$\dc(x(0),x(t))=t$ is of the form $[0,t_0]$ or $[0,+\infty[$. In
the first case we say that $x(t_0)$ is the {\bf cut point of $x_0$
along $x(t)$} while, in the second case, we say that the cut point
does not exists. The set ${\rm Cut}(x_0)$ defined as the union of
the cut points of $x_0$ along all the normal CC-geodesics starting
from $x_0$ is called the {\bf cut locus} of $x_0$.

\end{Defi}

\subsection{A sub-Riemannian version of the Gauss Lemma}\label{GLem}

\begin{oss}[Working hypothesis]Let $\GG$ be a $k$-step Carnot group.
Throughout this section we shall assume that {\bf there are not
strictly abnormal minimizers} in $\GG$; see Definition \ref{abn}.
\end{oss}

Here below we shall perform an explicit and general computation
that will be an important tool for the rest of this paper. It is
somehow based on the validity of the {\it eikonal equation}, first
proven in \cite{MoSC}.

We may start by the O.D.E.'s system \eqref{ngs1}. We assume the
solution is parameterized by arc-length. Furthermore, let us fix
the initial conditions: $x(0)=x_0$, $P(0)={P}_0$\footnote{
$P(0)=P_0=P\cc (0)+ P\vv(0)$.} and  set $\dc(x):=\dc(x,x_0)$. We
also assume the solution to be with unit speed (i.e. $|P\cc|=1$).
All together, these assumptions uniquely determine a normal
unit-speed CC-geodesic. In particular, we get
$$\dc(x(t))=t$$for every $t\in[0,\epsilon]$ ($\epsilon>0$ small
enough). By differentiating this identity, we obtain
$$\frac{d}{dt}\dc(x(t))=\langle\grad\,\dc(x(t)),\dot{x}(t)\rangle=\langle \dg \dc(x(t)),\dot{x}(t)\rangle=1.$$
Since $\dot{x}=P\cc$ and $|P\cc|=1$, the eikonal equation implies
that $\measuredangle\big(\dg\dc(x(t)),\dot{x}(t)\big)=0$, or
equivalently, that
$$\dg\dc(x(t))=\dot{x}(t)$$for every
small enough $t\geq 0$. This can be written more explicitly as
follows: \begin{equation}\label{nordh}X_i\dc(x(t))=P_i\qquad i\in
I\cc=\{1,...,\DH\}.\end{equation}Let now $\alpha\in
I\vv=\{\DH+1,...,n\}$ and let us differentiate the quantity
$X_\alpha\dc$ along the normal CC-geodesic $x(t)$ defined by the
previous assumptions. We have\footnote{For sake of simplicity, in
these computations we shall drop the dependence on the variable
$t$.}
\begin{eqnarray*}\frac{d}{dt}X_\alpha\dc(x)&=&\langle\grad\,(X_\alpha\dc)(x),\dot{x}\rangle=
\langle\dg\,(X_\alpha\dc)(x),\dot{x}\rangle\\&=&
\langle\dg\,(X_\alpha\dc)(x),P\cc\rangle=\sum_{i\in I\cc}P_i
X_i(X_\alpha\dc)(x).\end{eqnarray*}Since $X_iX_\alpha=X_\alpha X_i
+ [X_i,X_\alpha]=X_\alpha X_i + \sum_{\beta \in I_{{\rm
ord}(\alpha) +1}}\SC_{i\alpha}^\beta X_\beta$, we get

\begin{eqnarray*}\frac{d}{dt}X_\alpha\dc(x)&=&\sum_{i\in I\cc}P_i
X_\alpha(X_i\dc)(x) +  \sum_{\beta \in I_{{\rm ord}(\alpha)
+1}}\SC_{i\alpha}^\beta X_\beta\dc(x)\\&=&{\sum_{i\in
I\cc}P_i\Big( X_\alpha(P_i)(x)} + \sum_{\beta \in I_{{\rm
ord}(\alpha) +1}}\SC_{i\alpha}^\beta
X_\beta\dc(x)\Big)\\&=&\frac{1}{2} {X_\alpha({|P\cc|^2})(x)} +
\sum_{i\in I\cc}\sum_{\beta \in I_{{\rm ord}(\alpha) +1}}P_i
\SC_{i\alpha}^\beta X_\beta\dc(x)\end{eqnarray*} the first term in
the sum is zero because $|P\cc|=1$ (unit-speed). Using Definition
\ref{nota} and the skew-symmetry of $C^\beta$, we finally obtain
\begin{eqnarray}\label{norver}\frac{d}{dt}X_\alpha\dc(x)&=&-\sum_{\beta \in I_{{\rm
ord}(\alpha) +1}}X_\beta\dc(x)\langle C^\beta P\cc,
X_\alpha\rangle.
\end{eqnarray}

We summarize the previous discussion in the following:
\begin{lemma}\label{LEM1}Let $x:[0,r]\longrightarrow\GG\,(r>0)$ be any
normal CC-geodesic of unit-speed and parameterized by arc-length.
Let $x(0)=x_0$, $P(0)={P}_0$ be its initial data and set
$\dc(x)=\dc(x_0,x)\,(x\in\GG)$. Then we
have\begin{itemize}\item[(i)]$\dg\dc(x(t))=P\cc(t)$ for every
$t\in[0,r]$;\item[(ii)]$\frac{d}{dt}\grad\vv\dc(x(t))=-\sum_{\beta
\in I_{{\rm ord}(\alpha) +1}}X_\beta\dc(x(t))C^\beta P\cc(t)$ for
every $t\in[0,r]$.\end{itemize}
\end{lemma}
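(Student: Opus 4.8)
The plan is to read off both identities from the \emph{eikonal equation} $|\dg\dc|=1$, valid at the regular points of $\dc(\cdot,x_0)$ (see \cite{MoSC}), combined with the normal geodesic system \eqref{ngs1}; the argument is essentially the computation displayed just above the statement, reorganized. Since we are assuming there are no strictly abnormal minimizers in this section, Theorem \ref{mongeo} applies to the given unit-speed normal CC-geodesic $x$, so that $\dc(x(t))=t$ for $t$ in the range in which $x$ is minimizing, which by that theorem includes a neighbourhood of $0$; as the identities to be proved are local in $t$, this suffices (and extends to all of $[0,r]$ whenever $x|_{[0,r]}$ is globally minimizing). First I would differentiate $\dc(x(t))=t$ along $x$ and use $\dot x=P\cc$ to get $\langle\dg\dc(x(t)),P\cc(t)\rangle=1$; since $|P\cc(t)|=1$ and $|\dg\dc(x(t))|=1$ by the eikonal equation, the elementary identity $|\dg\dc-P\cc|^2=|\dg\dc|^2-2\langle\dg\dc,P\cc\rangle+|P\cc|^2=0$ forces $\dg\dc(x(t))=P\cc(t)$. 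This is (i); in the orthonormal horizontal frame it reads $X_i\dc(x(t))=P_i(t)$ for $i\in I\cc$.

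For (ii) I would fix $\alpha\in I\vv$ and differentiate $t\mapsto X_\alpha\dc(x(t))$ along $x$, obtaining $\frac{d}{dt}X_\alpha\dc(x)=\sum_{i\in I\cc}P_i\,X_i(X_\alpha\dc)(x)$ because $\dot x=P\cc$. Writing $X_iX_\alpha=X_\alpha X_i+[X_i,X_\alpha]$ and invoking the stratification property \eqref{chypc} (which gives $[X_i,X_\alpha]=\sum_{\beta\in I_{\mathrm{ord}(\alpha)+1}}\SC_{i\alpha}^\beta X_\beta$), together with $X_i\dc=P_i$ from (i), one arrives at
$$\frac{d}{dt}X_\alpha\dc(x)=\frac12\,X_\alpha(|P\cc|^2)(x)+\sum_{i\in I\cc}\sum_{\beta\in I_{\mathrm{ord}(\alpha)+1}}P_i\,\SC_{i\alpha}^\beta\,X_\beta\dc(x).$$
The first term vanishes since $x$ is unit-speed, $|P\cc|\equiv 1$, and rewriting the remaining double sum by means of Definition \ref{nota} and the skew-symmetry $\SC_{i\alpha}^\beta=-\SC_{\alpha i}^\beta$ of the structural constants yields $\frac{d}{dt}X_\alpha\dc(x(t))=-\sum_{\beta\in I_{\mathrm{ord}(\alpha)+1}}X_\beta\dc(x(t))\,\langle C^\beta P\cc(t),X_\alpha\rangle$, i.e. exactly \eqref{norver}. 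Collecting these scalar equalities over all $\alpha\in I\vv$ gives the vector identity (ii).

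The computation itself is routine; the one point that deserves care is the passage $\langle\dg\dc,\dot x\rangle=1\Rightarrow\dg\dc=\dot x$, which is \emph{not} purely formal — it hinges on the norm identity $|\dg\dc(x(t))|=1$, i.e. on the eikonal equation being available along $x$. This is precisely where the standing hypothesis of the section (absence of strictly abnormal minimizers) enters: it guarantees that the arc of $x$ under consideration lies in the open set of regular points of $\dc(\cdot,x_0)$ where the result of \cite{MoSC} holds. Everything else is bookkeeping with the Carnot structural constants, made transparent by \eqref{chypc}.
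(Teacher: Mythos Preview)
Your proof is correct and follows essentially the same route as the paper: differentiate $\dc(x(t))=t$, combine the eikonal equation $|\dg\dc|=1$ with $|P\cc|=1$ to force $\dg\dc=P\cc$ (the paper phrases this as the angle being zero, you via $|\dg\dc-P\cc|^2=0$, which is the same), then differentiate $X_\alpha\dc$ along $x$, commute $X_iX_\alpha$, and kill the $\tfrac12 X_\alpha(|P\cc|^2)$ term using the eikonal equation again. Your closing remark on where the no-abnormal-minimizers hypothesis enters is a welcome clarification that the paper leaves implicit.
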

\begin{proof}The first claim is \eqref{nordh}, while the second one is \eqref{norver},
 both rewritten using vector notation.\end{proof}

We reformulate  Lemma \ref{LEM1}, by using the notation given in
Definition \ref{NONONO}. One has
\begin{eqnarray*}\frac{d}{dt}\grad\vv\dc(x)=-C(\grad\vv\dc(x))P\cc.\end{eqnarray*}

At this point Lemma \ref{LEM1} can be restated, in geometric
terms, as follows:
\begin{Prop}[Sub-Riemannian Gauss' Lemma]\label{PODER}Let
$\mathbb{S}^n\sr(x_0, t)$ the CC-sphere centered at $x_0$ of
radius $t\in[0,r]$ and set
 $\nn=\nn|_{\mathbb{S}^n\sr(x_0,
t)}$ and $\varpi=\varpi|_{\mathbb{S}^n\sr(x_0, t)}$\,\, $(t\in
[0,r])$. Let $x:[0,r]\longrightarrow\GG\,(r>0)$ be any normal
CC-geodesic of unit-speed, parameterized by arc-length, with
initial data $x(0)=x_0$, $P(0)={P}_0$, i.e. $x(t)=\exp\sr(x_0,
P_0)(t)\,\,(t\in[0,r])$. Then, for every $t\in[0,r]$ the following
O.D.E.'s system holds:\begin{eqnarray}\label{ODER}
\left\{\begin{array}{ll}\frac{d x}{dt}=\nn\\\\
\frac{d\nn}{dt}=-C\cc(\varpi)\nn
\\\\\frac{d\varpi}{dt}=-C(\varpi)\nn.\end{array}\right.\end{eqnarray}
\end{Prop}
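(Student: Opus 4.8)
The plan is to realize the CC-sphere $\mathbb{S}^n\sr(x_0,t)$ as a level set of the function $\dc(\cdot):=\dc(\cdot,x_0)$ and to read the pair $(\nn,\varpi)$ off the momentum $P(t)=(P\cc(t),P\vv(t))$ of the geodesic $x(t)=\exp\sr(x_0,P_0)(t)$. By Theorem \ref{mongeo}, together with the working hypothesis of this section (no strictly abnormal minimizers), every sufficiently short sub-arc of $x$ is the \emph{unique} minimizing CC-geodesic between its endpoints, so $\dc(x(t))=t$ and $\dc(\cdot)$ is smooth in a neighbourhood of $x(t)$ for the relevant range of $t$; there $\mathbb{S}^n\sr(x_0,t)=\{\dc(\cdot)=t\}$ and its Riemannian unit normal is $\nu=\grad\dc/|\grad\dc|$.

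First I would prove that $\grad\dc(x(t))=P(t)$, i.e.\ that the Riemannian gradient of the distance coincides along $x$ with the momentum vector. Its horizontal component is precisely Lemma \ref{LEM1}(i): the eikonal equation forces $\dg\dc(x(t))=\PH\grad\dc(x(t))=P\cc(t)$. For the vertical component, note that $t\mapsto\grad\vv\dc(x(t))$ and $t\mapsto P\vv(t)$ solve the same linear O.D.E.\ — the first by the reformulation of Lemma \ref{LEM1}, $\tfrac{d}{dt}\grad\vv\dc(x)=-C(\grad\vv\dc(x))\,P\cc$ (this is \eqref{norver}), the second by projecting the second equation of \eqref{ngs1} onto $\VV$ — so, matching initial data and invoking uniqueness for linear systems, $\grad\vv\dc(x(t))=P\vv(t)$, whence $\grad\dc(x(t))=P(t)$.

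Next I would read off $\nn$ and $\varpi$. Since $\PH\nu$ is a positive multiple of $\PH\grad\dc(x(t))=P\cc(t)$ and $|P\cc(t)|=1$ by unit speed, Definition \ref{carca} gives $\nn=\PH\nu/|\PH\nu|=P\cc(t)$ and, similarly, $\varpi=\P\vv\nu/|\PH\nu|=\P\vv\grad\dc(x(t))=P\vv(t)$. It then remains to rewrite \eqref{ngs1} in these terms: the first equation $\dot x=P\cc$ is $dx/dt=\nn$; differentiating $\nn=P\cc$ and using $\dot P=-C(P\vv)P\cc$, the horizontal part of $C(P\vv)P\cc$ involves only the brackets $[X_i,X_j]$ with $i,j\in I\cc$, which by \eqref{chypc} lie in $\HH_2$, so this horizontal part equals $C\cc(\varpi)\,\nn$ by Definition \ref{nota} and Definition \ref{carca}, giving $d\nn/dt=-C\cc(\varpi)\nn$; and differentiating $\varpi=\grad\vv\dc(x(t))$ and using again the reformulation of Lemma \ref{LEM1} gives $d\varpi/dt=-C(\grad\vv\dc(x(t)))\,P\cc=-C(\varpi)\nn$ (read componentwise in $I\vv$, exactly as in Lemma \ref{LEM1}).

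The main obstacle is the identity $\grad\vv\dc(x(t))=P\vv(t)$: the O.D.E.\ comparison itself is immediate, but it relies on having a clean initial condition while $\dc(\cdot,x_0)$ is not differentiable at the point $x_0$ itself. This forces one to set up the comparison carefully — for instance by propagating the equality from a small positive time $t_0$ at which $\dc(\cdot)$ is already smooth and $\dc(x(t))\equiv t$, or by invoking the intrinsic description of $\grad\vv\dc$ along normal CC-geodesics recorded in the Introduction. The rest — the passage from the Riemannian normal $\nu$ to the pair $(\nn,\varpi)$ and the bookkeeping with the structure constants via \eqref{chypc} — is routine.
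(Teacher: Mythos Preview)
Your proof is correct and follows essentially the same route as the paper: realize the CC-sphere as a level set of $\dc(\cdot,x_0)$, use the eikonal equation to identify $\nn=\dg\dc$ and $\varpi=\grad\vv\dc$, and then read off the three equations of \eqref{ODER} from Lemma~\ref{LEM1} and the normal geodesic equations \eqref{ngs1}/\eqref{lnge23}.

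One remark on the ``obstacle'' you flag. The detour through the identity $\grad\vv\dc(x(t))=P\vv(t)$, with its attendant initial-condition issue at $t=0$, is not actually needed. For the \emph{third} equation you already proceed directly: $\varpi=\grad\vv\dc$ and Lemma~\ref{LEM1}(ii) give $d\varpi/dt=-C(\varpi)\nn$ with no reference to $P\vv$. The \emph{second} equation can be obtained in exactly the same spirit: differentiate $\nn=\dg\dc$ along $x$ by the very same commutator computation that produced \eqref{norver}, using $X_jX_i=X_iX_j+[X_j,X_i]$ and the eikonal identity $X_i\big(\sum_j(X_j\dc)^2\big)=0$. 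Since $[X_j,X_i]\in\HH_2$ for $i,j\in I\cc$ by \eqref{chypc}, one obtains directly
\[
\frac{d}{dt}\,\dg\dc(x(t))=-C\cc\big(\grad\vv\dc(x(t))\big)\,\dg\dc(x(t))=-C\cc(\varpi)\,\nn,
\]
again with no appeal to $P\vv$. This sidesteps the ODE-uniqueness argument entirely and removes the difficulty at $t=0$. (The paper's own proof, as phrased, invokes the third equation of \eqref{lnge23} for this step, which does implicitly use $C\cc(P\vv)=C\cc(\varpi)$; so your caution is not misplaced --- but the cleaner route above makes the point moot.)
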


Notice that the first equation in \eqref{ODER} says that each
normal CC-geodesic starting from $x_0$ intersects
$\mathbb{S}^n\sr(x_0, t)$ {\it orthogonally} (in the horizontal
sense), i.e. at the intersection point $x(t)$ the velocity vector
of the normal CC-geodesic coincides with the horizontal unit
normal. Furthermore, the second and third equations in
\eqref{ODER} express how change $\nn=\nn|_{\mathbb{S}\sr(x_0, t)}$
and $\varpi=\varpi|_{\mathbb{S}\sr(x_0, t)}$ along $x(t)$.

\begin{proof}[Proof of Proposition \ref{PODER}]We stress that our hypothesis about the absence of
abnormal minimizers in $\GG$ implies the smoothness of $\dc$.
Since every CC-sphere $\mathbb{S}^n\sr(x_0,t)\,\,(t\in[0,r])$
turns out to be defined as
$\mathbb{S}^n\sr(x_0,t)=\{x:\dc(x)=t\}$, the Riemannian unit
normal vector $\nu$  along $\mathbb{S}^n\sr(x_0,t)$ (at each
regular non-characteristic point) may be written just by
normalizing the following (non unit) normal vector along
$\mathbb{S}^n\sr(x_0,t)$
$$\mathcal{N}:=\grad\,\dc=(\dg\dc,\grad\cd\dc).$$Clearly, $\nn$ is uniquely
determined \footnote{Remind that, by definition,
$\nn:=\frac{\PH\nu}{|\PH\nu|}$, and
$\varpi:=\frac{\P\vv\nu}{|\PH\nu|}$.} by $\mathcal{N}$. The
eikonal equation implies that
$$\nn=\dg\dc,\quad\varpi=\grad\vv\dc\qquad\forall\,\,t\in
[0,r].$$Therefore, by the first equation of system \eqref{ngs1}
and by (i) of Lemma \ref{LEM1}, we immediately get that
$\dot{x}(t)=P\cc(t)=\nn(x(t))$. This identity together with (ii)
of Lemma \ref{LEM1} implies the third equation in \eqref{ODER}.
Finally, the second equation in \eqref{ODER} immediately follows
by using (i) of \ref{LEM1} together with the third equation of
\eqref{lnge23}.
\end{proof}

\begin{oss}
We stress that Lemma \ref{PODER} solves the problem of selecting,
for any regular point $x_1$ belonging to the CC-sphere
$\mathbb{S}^n\sr(x_0,r)$, the unique normal CC-geodesic having
velocity vector equals to the horizontal normal direction at that
point (i.e. $P\cc(0)=\nn(x_1)$) and connecting this point to the
center $x_0$ of the CC-sphere. Actually, Lemma \ref{PODER} and the
uniqueness of solutions of O.D.E.'s imply that the desired curve
must be the normal CC-geodesic defined by
$$\widetilde{x}(t):=\exp(x_1,-\mathcal{N}(x_1))(t)\qquad
t\in[0,r]$$where $\mathcal{N}=(\nn,\varpi)$.

\end{oss}
\begin{corollario}\label{important}
Let $S=\{x\in\mathbb{G}:\:\:f(x)=0 \},$ where $f$ is a $C^2$
function. Assume that there exists a CC-ball $B(y,r)$ with center
$y$ and radius $r,$ such that $B(y,r)\subset \{f(x)<0 \}$, or
$B(y,r)\subset \{f(x)>0 \}$, and ${B(y,r)}\cap S=\{x\}$, where
$x\in S$ is non-characteristic. Then there exists the metric
normal $\gamma_{\mathcal{N}}$ to $S$ at $x$ and  for every
$t\in[0,r],$ $\gamma(t)\in \gamma_{\mathcal{N}},$ where
$$\gamma(t):=\exp\sr(y,-\mathcal{N}(x))(t)\qquad
t\in[0,r].$$\end{corollario}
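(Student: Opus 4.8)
The plan is to combine three ingredients. The touching–ball hypothesis forces the $\cont^2$ hypersurface $S$ and the CC–sphere $\mathbb{S}^n\sr(y,r)=\partial B(y,r)$ to be tangent at $x$; the sub-Riemannian Gauss Lemma (Proposition \ref{PODER}) then identifies the minimizing CC-geodesic joining $y$ and $x$ with the curve $\gamma(t)=\exp\sr(y,-\mathcal{N}(x))(t)$ of the statement; and a purely metric triangle-inequality argument shows that every point of that arc lies in the metric normal $\gamma_{\mathcal{N}}$.

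First I would record the elementary consequences of the hypotheses. Since $B(y,r)\subset\{f<0\}$ while $S=\{f=0\}$, no point of $S$ lies in $B(y,r)$; as $x\in\overline{B(y,r)}\cap S$ this forces $\dc(y,x)=r$ (were $\dc(y,x)<r$, then $x\in B(y,r)\subset\{f<0\}$, contradicting $x\in S$) and $\dc(y,w)\ge r$ for every $w\in S$. Under the standing assumption of this section (no strictly abnormal minimizers), $\dc(\cdot,y)$ is smooth on $\GG\setminus\{y\}$, so every CC-sphere $\mathbb{S}^n\sr(y,t)=\{\dc(\cdot,y)=t\}$ is a smooth hypersurface; moreover a minimizing CC-geodesic from $y$ to $x$ exists by completeness and, being non–strictly-abnormal, is normal, hence by Theorem \ref{mongeo} it is the unique minimizing CC-geodesic joining these two points. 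Call it $\gamma$, parametrized by arc-length on $[0,r]$.

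Next I would establish the tangency and identify $\gamma$. Since $\overline{B(y,r)}\subset\{f\le 0\}$ touches $S$ only at $x$, the smooth hypersurface $\mathbb{S}^n\sr(y,r)$ stays on one side of $S$ near $x$ and meets it there, so $\TT_x\mathbb{S}^n\sr(y,r)=\TT_xS$; in particular $x$ is non-characteristic also for $\mathbb{S}^n\sr(y,r)$, and orienting $S$ by the unit normal $\nu$ pointing into $\{f>0\}$ (the side not containing $B(y,r)$) makes $\nu$ agree at $x$ with the outward unit normal of $\mathbb{S}^n\sr(y,r)$. Hence the datum $\mathcal{N}=\nu/|\PH\nu|=(\nn,\varpi)$ of Definition \ref{carca} attached to $S$ equals the one attached to $\mathbb{S}^n\sr(y,r)$, which by the eikonal equation of \cite{MoSC} is simply $\grad\,\dc(x)$. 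Feeding this into Lemma \ref{LEM1} and Proposition \ref{PODER} and using uniqueness of solutions of the normal equations \eqref{ngs1}, one gets that $\gamma$ is precisely the curve $\gamma(t)=\exp\sr(y,-\mathcal{N}(x))(t)$, $t\in[0,r]$, of the statement (the content of the Remark following Proposition \ref{PODER}). I expect this to be the delicate step: what really requires care is the differentiability of $\dc(\cdot,y)$ at $x$ — so that $\mathbb{S}^n\sr(y,r)$ is a genuine $\cont^1$ hypersurface there and the tangency with $S$ is meaningful — which is where the section's working hypothesis is used (alternatively this follows from the interior-ball condition together with $S\in\cont^2$); the remaining tangency and orientation issues are then classical one-sided-touching facts.

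Finally I would prove $\gamma([0,r])\subset\gamma_{\mathcal{N}}$. Fix a point $p=\gamma(t)$ on the arc. Since $\gamma$ is a unit-speed minimizing CC-geodesic joining $y$ and $x$, each of its sub-arcs is minimizing, whence $\dc(y,p)+\dc(p,x)=\dc(y,x)=r$, so $\dc(x,p)=r-\dc(y,p)$. For every $w\in S$ the triangle inequality together with $\dc(y,w)\ge r$ gives $\dc(p,w)\ge\dc(y,w)-\dc(y,p)\ge r-\dc(y,p)=\dc(x,p)$; as $x\in S$ attains equality, $\dc(p,S)=\dc(x,p)$, i.e. $p\in\gamma_{\mathcal{N}}$. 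Letting $t$ range over $[0,r]$ exhibits the non-trivial CC-geodesic arc $\gamma([0,r])$ inside the metric normal, which in particular proves the existence of $\gamma_{\mathcal{N}}$; note moreover that $\gamma(t)\in\overline{B(y,r)}\subset\{f\le 0\}$ with $\gamma(t)\in S$ only at the endpoint equal to $x$ (since $B(y,r)\cap S=\emptyset$), consistently with $\gamma_{\mathcal{N}}$ being a normal issuing from $S$ at $x$.
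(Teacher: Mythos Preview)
Your proposal is correct and follows essentially the approach the paper intends: the paper gives no explicit proof of this corollary, treating it as an immediate consequence of Proposition \ref{PODER} and the Remark immediately preceding the corollary. Your three steps---tangency of $S$ and $\mathbb{S}^n\sr(y,r)$ at $x$, identification of the minimizing geodesic via the Gauss Lemma and ODE uniqueness, and the triangle-inequality verification that each $\gamma(t)$ realizes $\dc(\gamma(t),S)=\dc(\gamma(t),x)$---make this implicit reasoning explicit.
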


\subsection{2-step case: explicit integration and other
features}\label{2-stepgeod}

In this section we shall explicitly analyze the case of 2-step
Carnot groups. Remind that in the 2-step setting there exist no
abnormal minimizers and our working hypothesis is satisfied. In
this case, it is well-known that CC-geodesics are smooth; see
\cite{GoKA}.

 In order to describe the
 sub-Riemannian exponential map,  we note that the system for
normal CC-geodesics can explicitly  be integrated. Using
\eqref{ngstep2}, i.e.
\begin{eqnarray*}
\left\{\begin{array}{ll}\,\,\,\,\dot{x}=P\cc\\\dot{P}\cc=-
C\cc(P\cd) P\cc\\\dot{P}\cd=0,\end{array}\right.\end{eqnarray*}we
get that $P\cd\in\R^{\DH_2}\cong\HH_2$ is a constant vector. By
standard results about O.D.E.'s, we therefore get that
$$P\cc(t)=e^{-C\cc(P\cd)t}P\cc(0)
.$$ To obtain
 the solution in exponential coordinates first note
that the equation $\dot{x}=P\cc$ is
equivalent\footnote{Explicitly, we have \begin{eqnarray*}
\dot{x}_\alpha=\langle P\cc,\ee_\alpha\rangle=\sum_{i\in
I\cc}P_i\langle X_i,\ee_\alpha\rangle= \sum_{i\in
I\cc}P_i\Big(-\frac{1}{2}\langle C^\alpha\cc x,\ee_i\rangle\Big)=
-\frac{1}{2}\langle C^\alpha\cc x,P\cc\rangle\qquad(\alpha\in
I\cd).\end{eqnarray*}} to
$$
\begin{cases}\,\dot{x}_i=P_i&\qquad(i\in I\cc)\\
\dot{x}_\alpha=-\frac{1}{2}\langle C^\alpha\cc x\cc,P\cc\rangle
&\qquad(\alpha\in I\cd).\end{cases}
$$

Therefore, setting $x\cc(t):=(x_1(t),...,x_\DH(t))\in\R^\DH$ to
denote the projection of the solution $x(t)$ of \eqref{ngstep2}
onto the firsts $\DH$ variables\footnote{Using exponential
coordinates for $\GG$, every point $x\in\GG$ is $n$-tuple
$x=(x_1,...,x_\DH, x_{\DH+1},...,x_n)$. So it seems natural to
``divide'' the variables as
follows:$$x\cc:=(x_1,...,x_\DH)\in\R^\DH,\qquad
x\cd:=(x_{\DH+1},...,x_n)\in\R^{\DH_2} \qquad(n=\DH+ \DH_2)$$.}, one gets
\begin{eqnarray}\label{pdio}x\cc(t)=x\cc(0) +
\int_0^te^{-C\cc(P\cd)s}P\cc(0)\,d s\end{eqnarray}and
\begin{eqnarray}\label{caz1}x_\alpha(t)=
x_\alpha(0)-\frac{1}{2} \int_{0}^t \langle C^\alpha\cc
x\cc,\dot{x}\cc\rangle\,d s.\end{eqnarray}

These equations describe the sub-Riemannian exponential map in the
2-step case. More precisely, fixing a base point $x_0$, we have
that
$$\exp\sr(x_0,\cdot)(\cdot):\UH\times\HH_2\times\R\longrightarrow\GG$$
is given by
\begin{eqnarray}\label{explicit}\exp\sr(x_0,P_0)(t):=x_0 + \int_0^te^{-C\cc(P\cd)s}P\cc(0)\,d
s-\frac{1}{2} \sum_{\alpha\in I\cd}\bigg\{\int_{0}^t \langle
C^\alpha\cc
x\cc,\dot{x}\cc\rangle\,ds\bigg\}\ee_\alpha.\end{eqnarray}

Here above $\UH$ denotes the bundle of all unit horizontal
vectors, i.e. $\UH\subset\HH\cong\mathbb{S}^{\DH-1}$.

\begin{oss}Setting $x(t):=\exp\sr(x_0,P_0)(t)$ and \[\mathcal{E}(t):=\int_0^te^{-C\cc(P\cd)s}\,d
s\in \mathcal{M}_{\DH},\] we get that\\

$x(t)=x_0+\Big(\langle\mathcal{E}(t)\P\cc(0), \ee_1
\rangle,...,\langle\mathcal{E}(t)\P\cc(0),
\ee_\DH\rangle,\ldots,\underbrace{\frac{1}{2}\Big(\int_{0}^t
\langle C^\alpha\cc\dot{x}\cc ,x\cc\rangle\,ds\Big)}_{\alpha-th \,
place},\ldots\Big)$.
\end{oss}

\begin{no}In the sequel, we shall denote by $d x\cc$ the following vector valued
1-form:
\[d x\cc:=(dx_1,...,dx_{\DH})^{\rm Tr}\in
\underbrace{\HH^\ast\times...\times\HH^\ast}_{\DH-\mbox{\tiny
times}}.\] \end{no}

\begin{no}
Let $x:[0, T]\longrightarrow\GG$ be a CC-normal geodesic such that
$x(0)=x_0$ and $x(T)=x_1$. Later on we shall set $[x_0,
x_1]:=x([0, T])=\{x(t)\in\GG : t\in[0, T]\}$.
\end{no}

\begin{oss}\label{uiiu}

For any $\alpha\in I\cd$, the integral
$$\mathcal{I}^\alpha_{x\cc}(t):=\int_{0}^t \langle C^\alpha\cc
x\cc,\dot{x}\cc\rangle\,ds=\int_{[x\cc(0),x\cc(t)]} \langle
C^\alpha\cc x\cc,d{x}\cc\rangle$$ can explicitly be evaluated, by
means of standard linear algebra arguments, by noting that, as
every skew-symmetric linear operator, $C^\alpha\cc$ can be
written, after an orthogonal change of basis, in a ``canonical''
form. More precisely, there exists $O^\alpha\in
\mathbf{O}_{\DH}(\R)$\footnote{i.e. the orthogonal group on
$\R^{\DH}(\cong\HH)$.} such that
$$(O^\alpha)^{-1}C^\alpha\cc O^\alpha=\left(%
\begin{array}{cccccccccc}
  0 & \lambda^\alpha_1 & 0 & 0 &  & \ldots &  &  \\
 -\lambda^\alpha_1 & 0 & 0 & 0 &  & &  &  \\
 0 & 0 & 0& \lambda^\alpha_2 & & &  & \\
  0 &0 & -\lambda^\alpha_2 & 0 & & &  &  \\
& &  &  &  &\ddots & & \\
 & &  &  &  & & 0&\lambda^\alpha_R &  \\
& &  &  &  &  &-\lambda^\alpha_R&  0&\\
 & &  &  &  &  & && \mathbf{0}_{N} \\
\end{array}
\right)$$where
$\pm\,\mathbf{i}\,\lambda^{\alpha}_j\,\,(j=1,...,R)$ are the
eigenvalues - purely imaginary - of $C^\alpha\cc$ computed with
their multiplicity, and $\mathbf{0}_N$ denotes the zero $N\times
N$-matrix where $N$ is the nullity of $C^\alpha\cc$. Here
$R=\frac{1}{2}\mbox{\rm {rank}}\,C^\alpha\cc$ and  $\DH=2R +N$.
Setting
\[y\cc(t)=(O^\alpha)^{-1}x\cc(t),\quad y\cc(0)=(O^\alpha)^{-1}x\cc(0),\]
 we get that\begin{eqnarray*}\mathcal{I}^\alpha_{x\cc}(t)=\int_{[x\cc(0),x\cc(t)]} \langle
C^\alpha\cc x\cc,d{x}\cc\rangle&=&\sum_{j=1}^R
\lambda^\alpha_j\int_{[y\cc(0),y\cc(t)]}{y_{j+1}dy_j-y_jdy_{j+1}}.\end{eqnarray*}
We stress that similar integrals was previously introduced and
studied by Pansu  in his deep study of differentiability in
CC-spaces; see \cite{P2}, Definition 4.6, p.15.
\end{oss}

Using a standard result about periodic solutions of linear
O.D.E.'s \footnote{The following result holds true (see
\cite{Bose}):\begin{teo}Let
\begin{equation}\label{azlan}\dot{y}:=A(t)y \end{equation} be a $n$-th order
system of linear homogeneous differential equations, where
$A(t)=[a_{ij}(t)]_{i, j=1,...,n}$ is a $n\times n$ matrix and each
 element $a_{ij}(t)$ is a real-valued function continuous
 on the real line $\R$. Let $\mathbf{W}$ denote a fundamental matrix of \eqref{azlan}.
  Let $k$ be a non-negative integer, $0\leq k\leq n$. Then there
exists a $k$-dimensional sub-space $\mathcal{S}_k$ of the solution
space $\mathcal{S}_n$ of \eqref{azlan} such that each member of
$S_k$ is periodic of period $T$ and no member of
$\mathcal{S}_n\setminus \mathcal{S}_k$ is periodic of period $T$
if and only if the rank of the matrix
$\mathbf{W}(T)-\mathbf{W}(0)$ is $n-k$\end{teo}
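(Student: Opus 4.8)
The plan is to reduce the statement to a linear-algebra fact about the matrix $\mathbf{W}(T)-\mathbf{W}(0)$ acting on the space of initial data. Recall first that the solution space $\mathcal{S}_n$ of \eqref{azlan} is an $n$-dimensional real vector space and that, once a fundamental matrix $\mathbf{W}$ has been fixed, every solution can be written uniquely as $y(t)=\mathbf{W}(t)c$ with $c\in\R^n$; since $\mathbf{W}(0)$ is invertible, the map $c\mapsto y=\mathbf{W}(\cdot)c$ is a linear isomorphism $\R^n\to\mathcal{S}_n$. We argue in the setting of periodic linear systems, i.e.\ under the (tacit) assumption $A(t+T)=A(t)$, which is that of the quoted result and in particular holds in our application, where $A$ is constant; without it the assertion is not correct in general, already for $n=1$.

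The core of the argument is the equivalence: \emph{$y=\mathbf{W}(\cdot)c$ is $T$-periodic if and only if $c\in\ker\!\bigl(\mathbf{W}(T)-\mathbf{W}(0)\bigr)$.} One direction is immediate, since $y(t+T)=y(t)$ for all $t$ forces in particular $y(T)=y(0)$, that is $\mathbf{W}(T)c=\mathbf{W}(0)c$. For the converse, assume $\mathbf{W}(T)c=\mathbf{W}(0)c$ and set $z(t):=y(t+T)$. By the periodicity of $A$ we get $\dot z(t)=\dot y(t+T)=A(t+T)y(t+T)=A(t)z(t)$, so $z$ is again a solution of \eqref{azlan}, and $z(0)=y(T)=y(0)$; hence $z\equiv y$ by uniqueness for the initial value problem, i.e.\ $y$ is $T$-periodic. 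I expect this step, and specifically the use of periodicity of $A$ to know that $y(\cdot+T)$ solves the same system, to be the only genuine point in the proof.

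Granting the equivalence, the set $\mathcal{S}_T^{\mathrm{per}}$ of $T$-periodic solutions is exactly the image of the linear subspace $\ker\!\bigl(\mathbf{W}(T)-\mathbf{W}(0)\bigr)\subset\R^n$ under $c\mapsto\mathbf{W}(\cdot)c$; hence it is a linear subspace of $\mathcal{S}_n$ whose dimension, by rank--nullity, equals $n-\mathrm{rank}\!\bigl(\mathbf{W}(T)-\mathbf{W}(0)\bigr)$. Both implications of the theorem now follow. If $\mathrm{rank}\!\bigl(\mathbf{W}(T)-\mathbf{W}(0)\bigr)=n-k$, then $\mathcal{S}_k:=\mathcal{S}_T^{\mathrm{per}}$ is a $k$-dimensional subspace all of whose members are $T$-periodic, while no solution outside it is $T$-periodic, because $\mathcal{S}_T^{\mathrm{per}}$ already contains \emph{every} $T$-periodic solution. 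Conversely, if some $k$-dimensional subspace $\mathcal{S}_k$ satisfies that every element of $\mathcal{S}_k$ is $T$-periodic and no element of $\mathcal{S}_n\setminus\mathcal{S}_k$ is, then $\mathcal{S}_k=\mathcal{S}_T^{\mathrm{per}}$ as sets, so $k=\dim\mathcal{S}_T^{\mathrm{per}}=n-\mathrm{rank}\!\bigl(\mathbf{W}(T)-\mathbf{W}(0)\bigr)$, i.e.\ $\mathrm{rank}\!\bigl(\mathbf{W}(T)-\mathbf{W}(0)\bigr)=n-k$, which is the claim.
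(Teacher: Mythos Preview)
The paper does not prove this theorem: it appears in a footnote and is quoted verbatim from \cite{Bose} as a tool for the subsequent discussion of $T$-periodic solutions of $\dot P\cc=-C\cc(P\cd)P\cc$. So there is no proof in the paper to compare your argument against.

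Your proof is correct and is the standard one. You are also right to flag that the statement, as reproduced in the paper, omits the hypothesis $A(t+T)=A(t)$; without it the implication ``$y(T)=y(0)\Rightarrow y$ is $T$-periodic'' fails, already for $n=1$ (take any continuous $A$ with $\int_0^T A=0$ but $A$ not $T$-periodic; then $\mathbf W(T)=\mathbf W(0)$ yet nontrivial solutions are not $T$-periodic). In the paper's application $A=-C\cc(P\cd)$ is constant, so the tacit hypothesis is satisfied, and your reduction to $\ker\bigl(\mathbf W(T)-\mathbf W(0)\bigr)$ together with rank--nullity gives exactly the stated equivalence.
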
}, we may study the
existence of $T$-periodic solutions, $T>0$, of the $\DH$-th order
linear O.D.E.'s system
\begin{equation}\label{lan} \dot{P}\cc=-
C\cc(P\cd)P\cc,\end{equation}where $P\cd\in\HH_2$ is constant.
More precisely, we get that there exists a $k$-dimensional
sub-space $\mathcal{S}_k$ of the solution space $\mathcal{S}_\DH$
of the equation \eqref{lan}  such that each member of
$\mathcal{S}_k$ is $T$-periodic, and no member of
$\mathcal{S}_\DH\setminus \mathcal{S}_k$ is $T$-periodic, if and
only if the following holds:
\[{\rm rank}(e^{-C\cc(P\cd)T}-\mathrm{Id}_\DH)=\DH-k.\]Hereafter, we shall analyze this
condition. First, note that $P\cd=0$ implies that $P\cc$ is
constant and so we may assume $P\cd\neq 0$. As in Remark
\ref{uiiu}, we make use of a standard Linear Algebra argument.
More precisely, as every skew-symmetric linear operator,
$C\cc(P\cd)$ can be written, after an orthogonal change of basis,
in canonical form. In particular, there exists $O\in
\mathbf{O}_{\DH}(\R)$ such that
$${\widetilde{C\cc}(P\cd)}:=O^{-1}C\cc(P\cd) O=\left(%
\begin{array}{cccccccccc}
  0 & \lambda_1 & 0 & 0 &  & &  &  \\
 -\lambda_1 & 0 & 0 & 0 &  & &  &  \\
 0 & 0 & 0& \lambda_2 & & &  & \\
  0 &0 & -\lambda_2 & 0 & & &  &  \\
& &  &  &  &\ddots & & \\
 & &  &  &  & & 0&\lambda_R &  \\
& &  &  &  &  &-\lambda_R&  0&\\
 & &  &  &  &  & && \mathbf{0}_{N} \\
\end{array}
\right)$$where $\pm\,\mathbf{i}\,\lambda_j\,\,(j=1,...,R)$ are the
- purely imaginary - eigenvalues of $C\cc(P\cd)$, computed with
their multiplicity, and $\mathbf{0}_N$ denotes the zero $N\times
N$-matrix, where $N$ is the nullity of $C\cc(P\cd)$. Here
$R=\frac{1}{2}\mbox{\rm {rank}}\,C\cc(P\cd)$ and  $\DH=2R +N$.
Notice that the eigenvalues are functions of $P\cd$, i.e.
$\lambda_j=\lambda_j(P\cd)\,\,(j=1,...,R)$. Setting
\[\widetilde{P}\cc(t)=O^{-1}P\cc(t),\qquad
\widetilde{P}\cc(0)=O^{-1}P\cc(0),\]
 we obtain the following equivalent equation:\begin{equation}\label{1lan}
\dot{\widetilde{P}\cc}=-
\widetilde{C\cc}(P\cd)\widetilde{P}\cc.\end{equation}By applying
the previous argument, we infer that there exists a
$k$-dimensional sub-space $\mathcal{S}_k$ of the solution space
$\mathcal{S}_\DH$ of the equation \eqref{1lan}  such that each
member of $\mathcal{S}_k$ is $T$-periodic, and no member of
$\mathcal{S}_\DH\setminus \mathcal{S}_k$ is $T$-periodic, if and
only if:
\begin{equation}\label{2lan}{\rm rank}(e^{-\widetilde{C\cc}(P\cd)T}-\mathrm{Id}_\DH)=\DH-k.\end{equation}
Therefore, it remains us to analyze the matrix
$e^{-\widetilde{C\cc}(P\cd)T}$  given by

$$\left(%
\begin{array}{cccccccccc}
 \cos(\lambda_1(P\cd) T) & \sin(\lambda_1(P\cd) T)& 0 \ldots  \\
-\sin(\lambda_1(P\cd) T)&  \cos(\lambda_1(P\cd) T)& 0 \ldots \\
0 & 0 & \ddots & & &\\
 \vdots& \vdots&  &  &     \cos(\lambda_R(P\cd) T)&  \sin(\lambda_R(P\cd) T) &  \\
& &  &  &  -\sin(\lambda_R(P\cd) T)&   \cos(\lambda_R(P\cd) T)&\\
 & &  &  &  &  & && \mathrm{Id}_{N} \\
\end{array}
\right).$$

We claim that for every given $P\cd\neq 0$, there exist
$T$-periodic solutions of \eqref{lan}, for some positive
$T=T(P\cd)$.  Indeed, if $N\neq 0$, i.e. the nullity $N$ of
$C\cc(P\cd)$ is non-zero, the claim follows by the above
discussion since ${\rm
rank}(e^{-\widetilde{C\cc}(P\cd)T}-\mathrm{Id}_\DH)\geq \DH-N$ and
$k\geq N>0$. Furthermore, if $N=0$, the claim follows by choosing
$T={2\pi }/{\lambda_j(P\cd) }$ for some $j=1,...,R$. Indeed in
such a case one sees that $k\geq 2$. With an analogous argument,
one can easily show that the dimension of the space
$\mathcal{S}_k$ of $T$-periodic solutions of \eqref{lan}
satisfies:$$N+ 2\leq k\leq \DH.$$Note that these solutions can
also be constant functions. In similar way, we see that for every
$P\cd\neq 0$ there exists $T=T(P\cd)>0$ and there exist -at least-
two non-constant, linearly independent, $T$-periodic solutions of
\eqref{lan}.

We summarize the above discussion in the next:
\begin{Prop}\label{df}For every $P\cd\in \HH_2$, $P\cd\neq 0$, there
exists $T=T(P\cd)>0$ and there exists a $k$-dimensional sub-space
$\mathcal{S}_k$ of the solution space $\mathcal{S}_\DH$ of
\eqref{1lan} such that each member of $\mathcal{S}_k$ is
$T$-periodic and no member of $\mathcal{S}_\DH\setminus
\mathcal{S}_k$ is $T$-periodic. Furthermore it turns out that $N+
2\leq k\leq \DH,$ where $N$ is the nullity of $C\cc(P\cd)$.
Finally, there exists a positive integer $r\geq 2$ and there
exists a $r$-dimensional sub-space $\mathcal{S}_{r}$ of
$\mathcal{S}_k$ such that each member of $\mathcal{S}_{r}$ is
$T$-periodic and non-constant.
\end{Prop}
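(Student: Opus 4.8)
The plan is to push through the reduction that has just been set up. After the orthogonal change of variables $\widetilde{P}\cc=O^{-1}P\cc$, equation \eqref{lan} becomes \eqref{1lan}, namely $\dot{\widetilde{P}\cc}=-\widetilde{C\cc}(P\cd)\widetilde{P}\cc$, with $\widetilde{C\cc}(P\cd)$ in the displayed canonical block form: $R$ two-dimensional rotation generators with parameters $\lambda_j=\lambda_j(P\cd)$, $j=1,\dots,R$, together with a trivial $N\times N$ block, where $\DH=2R+N$. A fundamental matrix of \eqref{1lan} is then $\mathbf{W}(t)=e^{-\widetilde{C\cc}(P\cd)t}$, which is block-diagonal with the $2\times2$ rotation blocks of argument $\lambda_j t$ and the block $\mathrm{Id}_N$; in particular $\mathbf{W}(0)=\mathrm{Id}_\DH$. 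By the theorem of \cite{Bose} recalled in the footnote, a $k$-dimensional subspace $\mathcal{S}_k$ of the solution space $\mathcal{S}_\DH$ with the stated periodicity and maximality properties exists --- and then $\mathcal{S}_k$ is necessarily the space of \emph{all} $T$-periodic solutions, which is clearly linear --- if and only if $\mathrm{rank}\bigl(\mathbf{W}(T)-\mathrm{Id}_\DH\bigr)=\DH-k$.

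The first step I would carry out is to record that $C\cc$ is injective on $\HH_2$, so that $P\cd\neq 0$ forces $R\geq 1$. Indeed $\bigl(C\cc(Z)\bigr)_{ij}=\langle [X_i,X_j],Z\rangle$ for $i,j\in I\cc$, and by the stratification identity \eqref{alg} the brackets $[X_i,X_j]$ span $[\HH_1,\HH_1]=\HH_2$; hence $C\cc(Z)=0$ with $Z\in\HH_2$ forces $Z\perp\HH_2$, i.e. $Z=0$. Consequently $\widetilde{C\cc}(P\cd)\neq 0$, so after relabelling $\lambda_1=\lambda_1(P\cd)\neq 0$. This is exactly the point without which the lower bound $k\geq N+2$ would fail (e.g. if $C\cc(P\cd)$ could vanish, all solutions would be constant and $k=N$).

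Then I would make the explicit choice $T:=2\pi/\lambda_1(P\cd)>0$ and evaluate $\mathrm{rank}\bigl(\mathbf{W}(T)-\mathrm{Id}_\DH\bigr)$ block by block. The $\mathrm{Id}_N$ block contributes $0$; the first rotation block equals $\mathrm{Id}_2$ because $\lambda_1 T=2\pi$, and also contributes $0$; each of the remaining $R-1$ rotation blocks, minus $\mathrm{Id}_2$, has determinant $2\bigl(1-\cos\lambda_j T\bigr)\geq 0$, hence contributes either $0$ or $2$. Therefore $\mathrm{rank}\bigl(\mathbf{W}(T)-\mathrm{Id}_\DH\bigr)\leq 2(R-1)=\DH-N-2$, so that $k=\DH-\mathrm{rank}\bigl(\mathbf{W}(T)-\mathrm{Id}_\DH\bigr)\geq N+2$, while $k\leq\DH$ is automatic. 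This yields the existence of $\mathcal{S}_k$ together with the bounds $N+2\leq k\leq\DH$.

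Finally, for the last assertion, the constant solutions of \eqref{1lan} are precisely those whose initial value lies in $\ker\widetilde{C\cc}(P\cd)$, so they form a subspace of $\mathcal{S}_\DH$ of dimension $N$ (the nullity of $C\cc(P\cd)$), and being constant they are $T$-periodic, hence lie in $\mathcal{S}_k$. Taking $\mathcal{S}_r$ to be any linear complement of this subspace inside $\mathcal{S}_k$, one gets $r=\dim\mathcal{S}_r=k-N\geq 2$, and since $\mathcal{S}_r$ meets the subspace of constant solutions only in $0$, every nonzero member of $\mathcal{S}_r$ is a $T$-periodic non-constant solution. I do not expect a genuine obstacle here; the only steps needing care are the injectivity of $C\cc$ (so that $R\geq 1$) and the correct bookkeeping of the ranks of the $2\times2$ blocks of $\mathbf{W}(T)-\mathrm{Id}_\DH$.
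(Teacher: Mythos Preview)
Your proposal is correct and follows essentially the same route as the paper: canonical block form for the skew-symmetric matrix $C\cc(P\cd)$, the Bose criterion via the rank of $e^{-\widetilde{C\cc}(P\cd)T}-\mathrm{Id}_\DH$, the choice $T=2\pi/\lambda_1$, and the identification $r=k-N$. Your version is in fact tighter in one place: you supply the injectivity argument for $Z\mapsto C\cc(Z)$ on $\HH_2$ (via $[\HH_1,\HH_1]=\HH_2$), which guarantees $R\geq 1$ when $P\cd\neq 0$; the paper uses this implicitly but does not spell it out.
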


\begin{oss}In the previous Proposition \ref{df} the numbers $k$ and $\,r$ can
be characterized in a slightly different way.  Indeed it turns out
that $k$ is the multiplicity  of the eigenvalue $\lambda=1$ of the
scalar matrix $e^{-C\cc(P\cd)T}$. Moreover $r=k-N$. To prove this
claim one can use another characterization of $T$-periodic
solutions of linear homogeneous systems of O.D.E.'s which can be
found in \cite{Bose}.
\end{oss}

We may use the above Proposition \ref{df} to study the
$T$-periodicity of $x\cc$; see \eqref{pdio}.

\begin{oss}If $x\cc$ is $T$-periodic then
$\int_0^TP\cc(s)\,ds=0$. This follows by hypothesis, using the
first equation of \eqref{ngstep2}. Note also that if $P\cc$ is
$T$-periodic, then $x\cc$ is $T$-periodic if and only if
$\int_0^TP\cc(s)\,ds=0$\footnote{Remind that if
$f:\R\longrightarrow\R$ is a continuous $T$-periodic function,
then $\int_0^tf(s)\,ds$ is $T$-periodic  if and only if
$\int_0^Tf(s)\,ds=0.$}.
\end{oss}

\begin{es} Let $P\cd\neq 0$ and let us assume that $C\cc(P\cd)$ is  invertible. For instance, this is the case
of the Heisenberg group $\mathbb{H}^n$. In such case, by a direct
computation  based on the very definition of $e^{-C\cc(P\cd)T}$,
it turns out that\[x\cc(t)=x\cc(0)+(C\cc(P\cd))^{-1}\big({\rm
Id}_n-e^{-C\cc(P\cd)t}\big)P\cc(0).\]\end{es}

\begin{oss}\label{k}Let us consider the system of normal CC-geodesics \eqref{ngstep2}. We would like to
remark that the $T$-periodicity of $x\cc$  is related with some
other things. To this aim, let us assume that $x\cc$ be
$T$-periodic. More precisely, we assume that there exists a
minimal $T=T(P\cd)>0$ such that $x\cc(t)$ is $T$-periodic, for any
given $P\cd\neq 0$. Furthermore, let $x_1:=\exp\sr(x_0, P_0)(T)$,
where $P_0=(P\cc(0), P\cd)\in\UH\times\HH_2$. Then it can be shown
that:\begin{itemize}\item[(i)] The point $x_1$ is conjugate to
$x_0$ along the normal CC-geodesic $x(t)=\exp\sr(x_0, P_0)(t)$,
$t\in[0, T]$;\item[(ii)]The ``segment'' $[x_0, x_1]:=x([0,
T])=\{x(t)\in\GG : t\in[0, T]\}$ is a minimizing normal
CC-geodesic and $x_1$ is the cut-point of $x_0$ along the normal
CC-geodesic $x(t)$.\end{itemize}In this case, we will say that
$[x_0, x_1]$ is a {\bf minimizing CC-geodesic segment}. Note that
the CC-length of segment $[x_0, x_1]$ is simply $T$. The proofs of
these claims can be done by following a classical pattern, for
which we refer the reader to \cite{DC}. However, this is beyond
the scope of this paper.
\end{oss}

\begin{oss}\label{k22}For 2-step Carnot groups, similar arguments can be used to show  that
 the CC-sphere $\mathbb{S}^n\sr(x,r)=\{y\in\GG :
\dc(x,y)=r\}$ is $\cin$-smooth out of the set $$\HH_2(x)\cap \mathbb{S}^n\sr(x,r),$$where
\[\HH_2(x):=\{z=\esp(z\cc, z\cd)\in\GG : z\cc=x\cc\}.\]More
precisely, each point  $y\in\mathbb{S}^n\sr(x,r)\setminus
\HH_2(x)$ can be joined to the center $x$ of
$\mathbb{S}^n\sr(x,r)$ by a unique minimizing normal CC-geodesic.
In particular, one can show that for every
$y\in\mathbb{S}^n\sr(x,r)\setminus \HH_2(x)$ there exists a unique
$P_0\in\UH\times\HH_2$ such that $y=\exp\sr(x, P_0)(r)$ and
$d\exp\sr(x,P_0)$ has maximal rank. On the other hand, if
$y\in\mathbb{S}^n\sr(x,r)\cap \HH_2(x)$, then it turns out that
$y\in\mathrm{Conj}(x)$.
\end{oss}

\subsection{Appendix: iterative integration for  normal CC-geodesics}\label{appe} In
this section we will how how, at least in principle, the system of
normal CC-geodesics  canbe integrated, step by step. In what
follows, we shall use the notation $P\ci=\P\ci(P)$. Remind that
\eqref{ngs1} is given
by\begin{eqnarray*}\left\{\begin{array}{ll}\,
\dot{x}=P\cc\,\,\,\qquad\qquad\big(\Longleftrightarrow\dot{x}\cc=P\cc,\,\,
\dot{x}\vv=0\big)\\\dot{P}=- C(P\vv)
P\cc;\end{array}\right.\end{eqnarray*}By orthogonal projection
onto $\HH_k$, we get that $\dot{P}\ck=0$. Hence $P\ck$ is
constant. Then the $k-1$-th vector equation becomes
$\dot{P}{_{^{_{\HH_{k-1}}}}}=-C(P\ck)\dot{x}$ and so
\begin{eqnarray*}{P}{_{^{_{\HH_{k-1}}}}}(t)={P}{_{^{_{\HH_{k-1}}}}}(0)-C(P\ck)(x(t)-x_0).
\end{eqnarray*}By iterating the same procedure
one gets
 $\dot{P}{_{^{_{\HH_{k-2}}}}}=-C(P{_{^{_{\HH_{k-1}}}}})\dot{x}\cc$. Hence\begin{eqnarray*}{P}{_{^{_{\HH_{k-2}}}}}(t)&=&{P}{_{^{_{\HH_{k-2}}}}}(0)-\int_0^t
 C(P{_{^{_{\HH_{k-1}}}}}(s))\dot{x}\cc\,ds\\&\ldots&\\{P}\ci(t)&=&{P}\ci(0)-\int_0^t
 C(P{_{^{_{\HH_{i-1}}}}}(s))\dot{x}\cc\,ds\\&\ldots&\\{P}\cd(t)&=&{P}\cd(0)-\int_0^t
 C(P\ctr(s))\dot{x}\cc\,ds\\{P}\cc(t)&=&{P}\cc(0)-\int_0^t
 C\cc(P\cd(s))\dot{x}\cc\,ds.\end{eqnarray*}Finally we get that
\begin{eqnarray*}{x}\cc(t)&=& x\cc(0) + \int_0^t P\cc(s)\,ds\\&=&x\cc(0) + \int_0^t
\Big({P}\cc(0)-\int_0^{s_1}
 C\cc(P\cd(s_2))\dot{x}\cc\,ds_2\Big)\,ds_1\\&=&x\cc(0) + {P}\cc(0)t-\int_0^t
\Big(\int_0^{s_1}
 C\cc(P\cd(s_2))\dot{x}\cc\,ds_2\Big)\,ds_1,\end{eqnarray*}
or equivalently, that
\begin{eqnarray*}\ddot{x}\cc(t)=
 C\cc(P\cd(t))\dot{x}\cc.\end{eqnarray*}

 In the sequel we shall apply this procedure to
the case of general 3-step Carnot groups. We stress that the first
vector equation in \eqref{ngstep3}, i.e. $\dot{x}=P\cc$, in
exponential coordinates, is equivalent\footnote{By the formula for
left-invariant vector field stated in Example \ref{3stex}, we get
that
\begin{eqnarray*} \dot{x}_\alpha&=&\langle
P\cc,\ee_\alpha\rangle=\sum_{i\in I\cc}P_i\langle
X_i,\ee_\alpha\rangle= \sum_{i\in I\cc}P_i\Big(-\frac{1}{2}\langle
C^\alpha\cc x\cc,\ee_i\rangle\Big)= -\frac{1}{2}\langle
C^\alpha\cc x\cc,P\cc\rangle\qquad\qquad\,\,\, (\alpha\in
I\cd)\\\dot{x}_\beta&=&\langle P\cc,\ee_\beta\rangle=\sum_{i\in
I\cc}P_i\langle X_i,\ee_\beta\rangle= -\Big(\frac{1}{2}\langle
C^\beta x,P\cc\rangle  -\frac{1}{12}\sum_{\alpha\in I\cd} \langle
C^\beta x,\ee_\alpha\rangle\langle C^\alpha\cc
x\cc,P\cc\rangle\Big)\qquad(\beta\in I\ctr).\end{eqnarray*}} to
the following system:
\begin{eqnarray*}
\left\{\begin{array}{ll}\dot{x}_i = P_i
\qquad\qquad\qquad\qquad\qquad\qquad\qquad\qquad\qquad\qquad\qquad(i\in
I\cc)\\\\
\dot{x}_\alpha = -\frac{1}{2}\langle C^\alpha\cc
x\cc,P\cc\rangle\qquad\qquad\qquad\,\qquad\qquad\qquad\qquad\qquad\,(\alpha\in
I\cd)\\\\
\dot{x}_\beta=-\big(\frac{1}{2}\langle C^\beta x,P\cc\rangle  -
\frac{1}{12}\sum_{\alpha\in I\cd}\langle C^\beta
x,\ee_\alpha\rangle\langle C^\alpha\cc x\cc,P\cc\rangle
\big)\,\,\,\quad(\beta\in
I\ctr).\end{array}\right.\end{eqnarray*}Therefore, for any
$\alpha\in I\cd$, we have
\begin{eqnarray}\label{caz2}x_\alpha(t)=
x_\alpha(0)-\frac{1}{2} \int_{[x\cc(0),x\cc(t)]} \langle
C^\alpha\cc x\cc,d{x}\cc\rangle\end{eqnarray}and, for any
$\beta\in I\ctr$, we have
\begin{eqnarray}\nonumber x_\beta(t)&=&
x_\beta(0)-\int_0^t\Big(\frac{1}{2}\langle C^\beta
x,\dot{x}\cc\rangle - \frac{1}{12}\sum_{\alpha\in I\cd}\langle
C^\alpha\cc x\cc,\dot{x}\cc\rangle \langle C^\beta
x,\ee_\alpha\rangle\Big)ds\\\label{caz3}&=&
x_\beta(0)-\frac{1}{2}\int_{[x\cc(0),x\cc(t)]}\Big(\langle C^\beta
x,dx\cc\rangle - \frac{1}{6}\sum_{\alpha\in I\cd}\langle C^\beta
x,\ee_\alpha\rangle\langle C^\alpha\cc x\cc,dx\cc\rangle
\Big)\end{eqnarray}Now we proceed with the integration of
\eqref{ngstep3} as we have explained in general, at the beginning
of this section. As before, we have that $\dot{P}\ctr=0$ and so
$P\ctr$ is constant. Since $\dot{P}\cd=-C(P\ctr)\dot{x}\cc$ we get
\begin{eqnarray*}{P}\cd(t)={P}\cd(0)-C(P\ctr)(x\cc(t)-x\cc(0)).
\end{eqnarray*}Moreover
 $\dot{P}\cc=-C(P\cd)\dot{x}\cc$,
 and so\begin{eqnarray*}{P}\cc(t)&=&{P}\cc(0)-\int_0^t
 C\cc(P\cd(s))\dot{x}\cc(s)\,ds\\&=&{P}\cc(0)-\int_0^t
 C\cc\big(\underbrace{{P}\cd(0)-C(P\ctr)[x\cc(s)-x\cc(0)]}_{=P\cd(s)}\big)\dot{x}\cc(s)\,ds\end{eqnarray*}
 Finally, we have
\begin{eqnarray*}x\cc(t)&=& x\cc(0) + \int_0^t P\cc(s)\,ds\\&=&x\cc(0) + \int_0^t
\Big({P}\cc(0)-\int_0^{s_1}
 C\cc(P\cd(s_2))\dot{x}\cc(s_2)\,ds_2\Big)\,ds_1\\&=&x\cc(0) + \int_0^t
\Big({P}\cc(0)-\int_0^{s_1}
 C\cc\big(\underbrace{{P}\cd(0)-C(P\ctr)[x\cc(s_2)-x\cc(0)]}_{=P\cd(s_2)}
\big)\dot{x}\cc(s_2)\,ds_2\Big)\,ds_1\\&=&x\cc(0) +
{P}\cc(0)t-\int_0^t \Big[\int_0^{s_1}
 C\cc\big(\underbrace{{P}\cd(0)-C(P\ctr)[x\cc(s_2)-x\cc(0)]}_{=P\cd(s_2)}
\big)\dot{x}\cc\,ds_2\Big]\,ds_1\end{eqnarray*}or, equivalently,
\begin{eqnarray}\label{for3src}\ddot{x}\cc=-C\cc
\big(\underbrace{P\cd(0)-C(P\ctr)[x\cc(t)-x\cc(0)]}_{=P\cd(t)}\big)\dot{x}\cc.\end{eqnarray}

\section{\large CC-distance from hypersurfaces in 2-step Carnot
groups}\label{us}
 {\it Throughout this section we shall assume that $\GG$ be a $2$-step Carnot group.} Let $S\subset\GG$ be
 a smooth hypersurface (i.e. closed $(n-1)$-dimensional submanifold of $\GG$) and let
\[\delta\cc:\GG\longrightarrow\R^+\cup\{0\}\] denote the {\it CC-distance
function for $S$}, i.e. $\delta\cc(x)=\inf_{y\in S}\dc(x,y)$.

 In the sequel we
shall use the notation $$\mathcal{N}:=\frac{\nu}{|\PH(\nu)|}=(\nn,
\varpi),$$ where $\nu$ is the Riemannian unit normal along $S$. We
stress that the normal (non-unit) vector field $\mathcal{N}$ is
defined at each non-characteristic point $x\in S\setminus C_S$.

 \begin{oss}We would like to remind
  some classical results about the regularity of the distance function to smooth hypersurfaces
  (or, more generally, submanifolds) which hold in the Euclidean -and Riemannian-
  setting. If $N\subset\Rn$ is a $\cont^k$-smooth
  manifold  with $k\geq 2$, then one easily sees that, near $N$, the distance function
  $\delta_N$ is $\cont^{k-1}$-smooth. A first, somewhat surprising result, was proved by Gilbarg
  Trudinger in the Appendix of their celebrated book
  \cite{Gilbarg}. Indeed they prove the existence of
  a neighborhood $U$ of $N$ such that the  distance function $\delta_N$ is of class
  $\cont^k$ on $U\setminus N$. A similar problem was also considered by Federer
  in its theory of Sets of Positive Reach; see \cite{FE1}.
  The complete solution of this problem, in the $\cont^1$ case, can be found in a paper by
   Krantz and Parks; see \cite{Krantz}. We stress that, in such a case,
   a further hypothesis is needed. More precisely, if we assume
   that $N$ is just $\cont^1$-smooth and that there exists a  a neighborhood $U$ of
   $N$ having the so-called ``unique nearest point property'',
   then $\delta_N$ is  of class
  $\cont^1$ on $U\setminus N$. Later on, a simplified proof of this result was proved by
  Foote; see \cite{Foote}.  We would like to remind that, some previous results of this type, in a sub-Riemannian
  setting,  was proved by
  Arcozzi and Ferrari for the first Heisenberg group $\mathbb{H}^1$; see\cite{AF}.
 \end{oss}

So let us begin by assuming that $S$ is  $\mathbf{C}^k$-smooth
with $k\geq 2$. As a first thing, let us define the mapping
$\Phi:S\times]-\epsilon,
\epsilon[\longrightarrow\GG\,(\epsilon>0)$
\begin{equation}\Phi(y,t):=\exp\sr(y,\,\mathcal{N}(y))(t).\end{equation}By construction, it turns
out that $\Phi\in\cont^{k-1} (S\setminus C_S\times ]-\epsilon,
\epsilon[)$ for any - small enough - $\epsilon>0$. The ``size'' of
$\epsilon$ depends on the local geometry \footnote{More precisely,
$\epsilon$ represents the CC-length of a {\it minimizing
CC-geodesic segment} having ``lagrangian multiplier''
$P\cd=\varpi(y)$; see Remark \ref{k}.} of $S$.

Next we shall apply the Inverse Mapping Theorem to this map. For
this reason, we have to compute the differential of $\Phi$ at
$t=0$. To this aim, for any fixed point $y_0\in S$, let us
introduce a system of {\it Riemannian normal coordinates}
\footnote{Let $(\tau_1,...,\tau_{n-1})$ be an orthonormal basis of
$\TT_{y_0}S$. For $1\leq j\leq n-1$ define a real-valued function
$u_j$ on a neighborhood of $y_0$ by
\[u_j\Big((\exp\rr)_{y_0}\Big(\sum_{j=1}^{n-1}t_i\tau_i\Big)\Big)=t_j,\]where
$\exp\rr$ denotes the Riemannian exponential map. Then, by
definition, $(u_1,...,u_{n-1})$ is a system of {\it normal
coordinates} corresponding to the orthonormal basis
$(\tau_1,...,\tau_{n-1})$.} in a neighborhood of $y_0$; see, for
instance, \cite{Ch1} or \cite{Hicks}. Hence $y\equiv
y(u_1,...,u_{n-1})$ and
\[\frac{\partial}{\partial u_1},...,\frac{\partial}{\partial
u_{n-1}}\] are the coordinate vector fields associated with the
normal coordinate system $(u_1,...,u_{n-1})$. Using these
coordinates,it turns out that $\mathcal{V}(y):=\frac{\partial
y}{\partial u_1}\wedge...\wedge\frac{\partial y}{\partial
u_{n-1}}$ is a normal (non-unit) vector along $S$, in a
neighborhood of $y_0\in S$. By definition, the Riemannian unit
normal $\nu$ along $S$ (in this neighborhood of $y_0$) is given,
up the the sign, by $\nu=\frac{\mathcal{V}}{|\mathcal{V}|}$, while
the horizontal unit normal $\nn$ can be written out as
$\nn=\frac{\PH\mathcal{V}}{|\PH\mathcal{V}|};$ see Section
\ref{dere}.

\begin{lemma}\label{CINV}Let
$S\subset\GG$ be  $\mathbf{C}^k$-smooth with $k\geq 2$. Then
\begin{equation}\label{condInv}\big|\det\big[\mathcal{J}_{(y,0)}\Phi\big]\big|=|\PH\mathcal{V}|,\end{equation}where
$\mathcal{J}_{(y,0)}\Phi$ denotes the Jacobian matrix operator at
$(y,0)\in S\times ]-\epsilon, \epsilon[$. Therefore
$\big|\det\big[\mathcal{J}_{(y,0)}\Phi\big]\big|\neq 0$ at each
non-characteristic point of $y\in S\setminus C_S$.
\end{lemma}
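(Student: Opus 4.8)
The statement to prove is Lemma \ref{CINV}: that the Jacobian determinant of $\Phi(y,t)=\exp\sr(y,\mathcal{N}(y))(t)$ at $t=0$ equals $|\PH\mathcal{V}|$, hence is nonzero off the characteristic set. The natural approach is to compute the Jacobian matrix $\mathcal{J}_{(y,0)}\Phi$ column by column with respect to the coordinate frame $\partial/\partial u_1,\dots,\partial/\partial u_{n-1}$ on $S$ (extended to a neighborhood via the normal coordinates) together with the $t$-direction. The key observation is that at $t=0$ the map $\Phi(\cdot,0)$ is just the inclusion $S\hookrightarrow\GG$, so the first $n-1$ columns of $\mathcal{J}_{(y,0)}\Phi$ are precisely $\partial y/\partial u_1,\dots,\partial y/\partial u_{n-1}$, i.e. they span $\TT_yS$. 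The last column is $\frac{\partial}{\partial t}\Phi(y,t)\big|_{t=0}=\dot\gamma(0)$ where $\gamma(t)=\exp\sr(y,\mathcal{N}(y))(t)$; by the normal geodesic equation \eqref{ngs1} (first equation $\dot x = P\cc$) and the definition $\mathcal{N}=(\nn,\varpi)$ with $P\cc(0)=\nn$, this last column is exactly the horizontal unit normal $\nn(y)$.

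Once the columns are identified, the determinant is $\det[\,\tfrac{\partial y}{\partial u_1}\,|\,\cdots\,|\,\tfrac{\partial y}{\partial u_{n-1}}\,|\,\nn(y)\,]$, computed in the orthonormal frame $\underline{X}$ with respect to the fixed Riemannian metric $g$. Here I would invoke the elementary linear-algebra fact that for vectors $v_1,\dots,v_{n-1}$ spanning a hyperplane and any unit vector $w$, one has $\det[v_1|\cdots|v_{n-1}|w] = \langle v_1\wedge\cdots\wedge v_{n-1},\, w\rangle = \langle \mathcal{V}(y), w\rangle$, since $\mathcal{V}(y)=\tfrac{\partial y}{\partial u_1}\wedge\cdots\wedge\tfrac{\partial y}{\partial u_{n-1}}$ is by its very definition the vector representing the wedge product (identified with an $(n-1)$-covector via $g$ and then with a vector via Hodge duality). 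Taking $w=\nn(y)=\frac{\PH\mathcal{V}}{|\PH\mathcal{V}|}$ and using that $\nn$ is horizontal while $\mathcal{V}-\PH\mathcal{V}$ is vertical (hence orthogonal to $\nn$), we get $\langle\mathcal{V},\nn\rangle = \langle\PH\mathcal{V},\nn\rangle = |\PH\mathcal{V}|$. Taking absolute values gives \eqref{condInv}.

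The final clause follows immediately: by Definition \ref{caratt} and the characterization recalled just after it, $y\in S$ is non-characteristic precisely when $\PH\nu(y)\neq 0$, equivalently $\PH\mathcal{V}(y)\neq 0$, so $|\PH\mathcal{V}|\neq 0$ exactly on $S\setminus C_S$; in particular $\Phi$ is a local diffeomorphism there by the Inverse Mapping Theorem.

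**Main obstacle.** The only real subtlety is bookkeeping about orientations and the identification of the wedge product $\mathcal{V}(y)=\bigwedge_j \partial y/\partial u_j$ with an actual vector (via $g$ and the Riemannian volume form $\sigma^n\rr$): one must make sure the sign conventions are consistent so that $\det[\cdots] = \langle\mathcal{V},w\rangle$ holds on the nose, which is why the statement is phrased with absolute values — this sidesteps any sign ambiguity. A secondary point worth a line of justification is that $\mathcal{N}(y)$, and hence the normal CC-geodesic $\exp\sr(y,\mathcal{N}(y))(\cdot)$, depends $\cont^{k-1}$-smoothly on $y$ (by smooth dependence of solutions of \eqref{ngs1} on initial data, together with the $\cont^{k-1}$ regularity of $\nu$ on a $\cont^k$ hypersurface), so that $\Phi$ is genuinely $\cont^{k-1}$ and the Jacobian is well-defined; but this is already asserted in the paragraph preceding the lemma. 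Everything else is a direct unwinding of the definitions, so I would not expect to grind through anything more involved than the column identification above.
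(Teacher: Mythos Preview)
Your proof is correct and follows essentially the same route as the paper's: identify the first $n-1$ columns of $\mathcal{J}_{(y,0)}\Phi$ as $\partial y/\partial u_i$ (since $\Phi(\cdot,0)$ is the inclusion) and the last column as $\nn(y)$, then evaluate the determinant as $\langle\mathcal{V},\nn\rangle=|\PH\mathcal{V}|$. The only difference is that the paper derives $\partial_t\Phi|_{t=0}=\nn(y)$ by unwinding the explicit 2-step integral formula \eqref{explicit} in exponential coordinates, whereas you read it directly off the first equation $\dot x=P\cc$ of \eqref{ngs1} with $P\cc(0)=\nn$ --- a cleaner shortcut that in fact does not use the 2-step hypothesis at all.
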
\begin{proof}Throughout this proof we will need some results of Section \ref{2-stepgeod}.
By \eqref{explicit} we get$$\Phi(y,t)=y +
\int_0^te^{-C\cc(\varpi(y))s}\nn(y)\,d s-\frac{1}{2}
\sum_{\alpha\in I\cd}\Big(\int_{0}^t \langle C^\alpha\cc
x\cc(s),\dot{x}\cc(s)\rangle\,d s\Big)\,\ee_\alpha,$$where
\begin{eqnarray*}x\cc(t)&:=& y\cc+\int_0^te^{-C\cc(\varpi(y))s}\nn(y)\,d
s\\&=&\bigg(..., \underbrace{y_i
+\Big\langle\Big(\int_0^te^{-C\cc(\varpi(y))s}\nn(y)\,d
s\Big),\ee_i\Big\rangle}_{i-th\,place},...\bigg)\in\R^{\DH}.
\end{eqnarray*}To sake of simplicity, we also set
\[\Phi(y, t):= y + \mathcal{A}(y, t).\]By using the explicit
expression of $\Phi$ and the Fundamental Theorem of Calculus we
get\footnote{Since $x(t):=\Phi(y, t)\in\GG$, in exponential
coordinates we have $\Phi=\esp((\Phi\cc, \Phi\cd))\equiv(\Phi\cc,
\Phi\cd)$, where $x\cc=\Phi\cc\in\R^\DH$ and
$x\cd=\Phi\cd\in\R^\vd$; see Notation \ref{Janeiro}.}
\begin{eqnarray*}\frac{\partial\Phi\cc}{\partial
t}&=&e^{-C\cc(\varpi)t}\nn,\\\frac{\partial\Phi\cd}{\partial
t}&=&\frac{1}{2}\sum_{\alpha\in I\cd}\langle C^\alpha\cc
\dot{x}\cc, x\cc\rangle\,\ee_\alpha.\end{eqnarray*}In the last
formula we have also used the skew-symmetry of the structure
constants operators $C\cc^\alpha\, (\alpha\in I\cd)$. Note that
from the previous computations we obtain:\footnote{Remind that we
are working in exponential coordinates.  So we have
\[\nn(y)=\sum_{i\in I\cc}\nn^i(y) X_i(y)=\sum_{i\in
I\cc}\nn^i(y)\underbrace{\bigg(\ee_i-\frac{1}{2}\sum_{\alpha\in
I\cd}\langle C^\alpha\cc y,\ee_i\rangle\bigg)}_{=X_i(y)}.\]
}\begin{eqnarray*}\Big(\frac{\partial\Phi}{\partial
t}\Big|_{t=0}\Big)^{Tr}&=&\sum_{i\in
I\cc}\nn^i(y)\ee_i+\frac{1}{2}\sum_{\alpha\in I\cd}\langle
C^\alpha\cc\nn(y), y\cc\rangle \ee_\alpha\\&=&\sum_{i\in
I\cc}\nn^i(y)X_i(y)\\&=&\nn(y).\end{eqnarray*}

Furthermore, using the system of normal coordinates
$u=(u_1,...,u_{n-1})$, we get that$$\frac{\partial\Phi}{\partial
u_i}=\frac{\partial y}{\partial u_{i}} +
\frac{\partial\mathcal{A}}{\partial u_i}\qquad(i=1,...,n-1).$$As
usual we will set
$\mathcal{A}=\esp((\mathcal{A}\cc,\mathcal{A}\cd))\equiv(\mathcal{A}\cc,\mathcal{A}\cd)$.
We compute
\begin{eqnarray*}\frac{\partial\mathcal{A}\cc}{\partial
u_i}&=&\int_0^t\bigg(\Big[
\mathcal{J}_y \Big(e^{-C\cc(\varpi(y))s}\nn(y)\Big)\Big]\frac{\partial y}{\partial u_i}\bigg)\,d s\in\R^{\DH},\\
\frac{\partial\mathcal{A}\cd}{\partial u_i}&=&-\frac{1}{2}
\sum_{\alpha\in I\cd}\Bigg(\int_{0}^t\Big\langle\grad_y
\big(\langle C^\alpha\cc
x\cc,\dot{x}\cc\rangle\big),\frac{\partial y}{\partial u_i}
\Big\rangle\,d s\Bigg)\ee_\alpha\in\R^\vd\qquad (i=1,...,n-1).
\end{eqnarray*}
Therefore, choosing $t=0$, one gets
\begin{eqnarray*}\mathcal{J}_{(y,0)}\Phi&=&{\rm col}\Big[\frac{\partial y}{\partial
u_1},...,\frac{\partial y}{\partial u_{n-1}},
\frac{\partial\Phi}{\partial t}\Big|_{t=0}\Big]\\&=&{\rm
col}\Big[\frac{\partial y}{\partial u_1},...,\frac{\partial
y}{\partial u_{n-1}}, \nn(y)\Big].\end{eqnarray*}Finally, we may
compute the Jacobian determinant of $\mathcal{J}_{(y,0)}\Phi$. By
using standard Linear Algebra arguments, one gets
\begin{eqnarray*}\big|\det\big[\mathcal{J}_{(y,0)}\Phi\big]\big|&=&\Big|
\det\Big({\rm col}\Big[\frac{\partial y}{\partial
u_1},...,\frac{\partial y}{\partial u_{n-1}},
\nn(y)\Big]\Big|\\&=&\Big|\Big\langle \Big(\frac{\partial
y}{\partial u_1}\wedge...\wedge\frac{\partial y}{\partial
u_{n-1}}\Big),\,\nn(y)\Big\rangle\Big|\\&=&\Big|\frac{\partial
y}{\partial u_1}\wedge...\wedge\frac{\partial y}{\partial
u_{n-1}}\Big||\langle\nu(y),\nn(y)\rangle|\\&=&\Big|\frac{\partial
y}{\partial u_1}\wedge...\wedge\frac{\partial y}{\partial
u_{n-1}}\Big||\PH\nu(y)|\\&=&|\PH\mathcal{V}(y)|,\end{eqnarray*}
which achieves the proof.
\end{proof}

\begin{oss}[Invertibility at the non-characteristic set]\label{inversion}Let us set
$S_0:=S\setminus C_S$. It turns out that $S_0$ is an open subset
of $S$, in the relative topology. Moreover, since we are assuming
that $S$ is $\cont^k$-smooth with $k\geq 2$, one gets that
$\dim\,C_S\leq (n-2)$; see \cite{Mag, Mag2}. Now let $U_0\Subset
S_0$ be an open set compactly contained in $S_0$. By Lemma
\ref{CINV}  we know that the Jacobian of the mapping $\Phi:
U_0\times ]-\epsilon, \epsilon[\longrightarrow\GG$ is non-zero
along $U_0\times\{0\}$. The Inverse Mapping Theorem implies that
there exists $\epsilon_0\in]0,\epsilon]$ such that
 \[\Phi: U_0\times ]-\epsilon_0,
\epsilon_0[\longrightarrow\Phi(U_0\times ]-\epsilon_0,
\epsilon_0[)\] is a $\cont^{k-1}$-diffeomorphism.
\end{oss}

\begin{no}[Projection mapping]The previous Remark \ref{inversion} enables us to define the following
mapping:
\[\Psi:=\Phi^{-1}: \Phi(U_0\times ]-\epsilon_0,
\epsilon_0[)\longrightarrow U_0\times ]-\epsilon_0,
\epsilon_0[.\]By construction, $\Psi$ is $\cont^{k-1}$-smooth. In
the sequel, we shall denote by $\Psi_S$ the projection of the map
$\Psi$ onto its 1st factor, i.e. $\Psi(x)=(\Psi_S(x), t(x))$.
\end{no}
 Let us set $U:=\Phi(U_0\times
]-\epsilon_0, \epsilon_0[)\subset\GG$ and let $x\in U$. The
previous discussion can be summarized by saying that every open
set $U_0$, which is compactly contained in $S_0$, has a
neighborhood $U\subset\GG$ satisfying the {\it unique nearest
point property} \footnote{With respect to the CC-distance $\dc$.},
i.e. for every $x\in U$ there exists a unique point $y\in
U_0\subset S_0$ such that $\delta\cc(x)=\dc(x, y)$. By using the
previous notation, one has $\Psi(x)=(y, t)$, where $y=\Psi_S(x)$
and $t(x)=\dc(x, y)=\delta\cc(x)$.

\begin{teo}\label{ccdreg}Let $\GG$ be a $2$-step Carnot group. Let $S\subset\GG$ be
 a $\cont^k$-smooth hypersurface with $k\geq 2$ and let
$\delta\cc$ denote the CC-distance function for $S$, i.e.
$\delta\cc(x)=\inf_{y\in S}\dc(x,y)$. Set $S_0:=S\setminus C_S$,
where $C_S$ denote the characteristic set of $S$. Then, for every
open set $U_0$ compactly contained in $S_0$, there exists a
neighborhood $U\subset\GG$ of $U_0$ having the {\bf unique nearest
point property} with respect to the CC-distance $\dc$. Finally,
the CC-distance function from $U_0\cap S$ is
$\delta\cc|_{U\setminus U_0}$ is a $\cont^k$-smooth function.

\end{teo}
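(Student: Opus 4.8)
The plan is to prove the two assertions in turn, building on Lemma~\ref{CINV} and Remark~\ref{inversion}. First I would fix auxiliary open sets $U_0\Subset U_1\Subset S_0$ with $U_1$ open in $S$, and apply Lemma~\ref{CINV} and the Inverse Mapping Theorem to $U_1$: since the Jacobian of $\Phi$ at $(y,0)$ is non-zero for $y\in U_1\subseteq S_0$, there is $\epsilon_1>0$ such that $\Phi$ restricts to a $\cont^{k-1}$-diffeomorphism of $U_1\times\,]-\epsilon_1,\epsilon_1[\,$ onto an open set $V_1\subseteq\GG$; write $\Psi:=(\Phi|_{V_1})^{-1}=(\Psi_S,t)$. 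Shrinking $\epsilon_1$, Theorem~\ref{mongeo} makes each arc $s\mapsto\Phi(y,s)$, $|s|\le\epsilon_1$, the unique minimizing CC-geodesic between its endpoints, so $\dc(\Phi(y,s),y)=|s|$. Then I would choose $\epsilon_0\in\,]0,\epsilon_1]$ so small that every $z\in S$ with $\dc(z,\overline{U_0})<2\epsilon_0$ lies in $U_1$ (possible since $\overline{U_0}$ is compact in the open set $U_1\subseteq S$ and $\dc$ induces the manifold topology), set $U:=\Phi(U_0\times\,]-\epsilon_0,\epsilon_0[)$, and finally intersect $U$ with an open subset of $\GG$ cutting out $U_0$ inside $S$, so that $U\cap S=U_0$.

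Next, for $x\in U$ I would establish the unique nearest point property together with $\delta\cc(x)=|t(x)|$. Write $x=\Phi(y_0,t_0)$ with $y_0\in U_0$, $|t_0|<\epsilon_0$, so $\delta\cc(x)\le|t_0|$. As $\GG$ is a complete, locally compact length space and $S$ is closed, $\delta\cc(x)$ is attained at some $y\in S$, and $\dc(y,y_0)\le\dc(y,x)+\dc(x,y_0)<2\epsilon_0$ forces $y\in U_1$. A minimizing CC-geodesic $\sigma$ from $y$ to $x$ is normal, since $2$-step groups have no strictly abnormal minimizers. A first-variation argument — Proposition~\ref{1varg} with the initial endpoint varied inside $S$, whose boundary term is $\bigl(\dot\sigma/|\dot\sigma|,\,P\vv\bigr)$, together with $|\dot\sigma|=|P\cc|=1$ — shows that at $y$ the momentum of $\sigma$ is $g$-orthogonal to $\TT_yS$ and of the norm forced by the eikonal normalization, hence equals $\pm\mathcal N(y)$; equivalently, $\dc(\cdot,x)|_S$ has a minimum at $y$, so $\grad\,\dc(\cdot,x)(y)$ is normal to $S$, and by the eikonal equation and Lemma~\ref{LEM1} it must be $\pm\mathcal N(y)$. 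Hence $\sigma(\cdot)=\exp\sr(y,\pm\mathcal N(y))(\cdot)=\Phi(y,\pm\,\cdot)$, so $x=\Phi(y,\pm\delta\cc(x))$ with $y\in U_1$ and $\delta\cc(x)<\epsilon_1$. Injectivity of $\Phi$ on $U_1\times\,]-\epsilon_1,\epsilon_1[$ then gives $y=y_0\in U_0$ and $\delta\cc(x)=|t_0|=|t(x)|$, and the same injectivity shows $y_0$ is the only nearest point. In particular $\delta\cc=|t\circ\Psi|\in\cont^{k-1}(U\setminus U_0)$, equal to $+t$ where $t>0$ and to $-t$ where $t<0$ (recall $U\setminus U_0=U\setminus S$).

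For the regularity statement it remains to gain one order of smoothness, in the spirit of Gilbarg--Trudinger. Fix $x\in U\setminus U_0$ and put $y:=\Psi_S(x)$; the unique minimizing geodesic from $y$ to $x$ is $s\mapsto\Phi(y,s)$, with momentum $P(s)$ solving \eqref{ngstep2} from $P(0)=\mathcal N(y)$. By the envelope principle $\grad\,\delta\cc(x)=\grad\,\dc(\cdot,y)(x)$ — the difference of the two functions has an interior minimum value $0$ at $x$, and both are $\cont^1$ near $x$ (for $\epsilon_0$ small, $\dc(\cdot,y)$ is smooth near $x$ by Theorem~\ref{mongeo}, and $\delta\cc=|t\circ\Psi|$ is $\cont^{k-1}$ there) — and by Lemma~\ref{LEM1} and \eqref{pdio} this gradient equals $P(t(x))=\bigl(e^{-C\cc(\varpi(y))\,t(x)}\,\nn(y),\ \varpi(y)\bigr)$, using that $P\vv\equiv\varpi(y)$ is constant along the geodesic in the $2$-step case. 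Since $\mathcal N=(\nn,\varpi)\in\cont^{k-1}(S_0)$, the flow of the polynomial system \eqref{ngstep2} is $\cin$ in its data, and $\Psi=(\Psi_S,t)\in\cont^{k-1}$, the right-hand side is a $\cont^{k-1}$ function of $x$ on each side of $S$; therefore every first-order partial derivative of $\delta\cc$ — a smooth combination of the components of $\grad\,\delta\cc$ in the left-invariant frame $\underline X$ — is $\cont^{k-1}$, so $\delta\cc\in\cont^{k}(U\setminus U_0)$.

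I expect the substantial step to be the middle one: promoting ``$\Phi$ is a local diffeomorphism'' (immediate from Lemma~\ref{CINV}) to ``$t(x)$ is the signed CC-distance and $\Psi_S(x)$ is the unique foot point''. This is where the genuinely sub-Riemannian input is used essentially — the absence of strictly abnormal minimizers (so that $\sigma$ is a normal geodesic $\exp\sr(y,P_0)$), the first-variation formula of Proposition~\ref{1varg} / the sub-Riemannian Gauss Lemma (Proposition~\ref{PODER}) and the eikonal equation (to identify $P_0=\pm\mathcal N(y)$), and injectivity of $\Phi$. The regularity bootstrap is then routine once the explicit $2$-step exponential map and Lemma~\ref{LEM1} are in hand, and the remaining points — existence of a nearest point, the successive shrinkings of $\epsilon$ and $U$, and the reduction $U\cap S=U_0$ — are standard and I would treat them briefly.
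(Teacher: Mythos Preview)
Your proof is correct and follows the same overall route as the paper: invert $\Phi$ via Lemma~\ref{CINV}, identify $\grad\,\delta\cc(x)$ with the momentum of the minimizing geodesic at $x$, and write this out explicitly in the $2$-step case as $\bigl(e^{-C\cc(\varpi(y))t}\nn(y),\,\varpi(y)\bigr)$, which is $\cont^{k-1}$ in $x$. The paper's version of the gradient computation differs only cosmetically --- it writes $\delta\cc(x)=\dc(x,\Psi_S(x))$, applies the chain rule, and observes that the term involving $\mathcal{J}_x\Psi_S$ vanishes because $[\mathcal{J}_x\Psi_S]X\in\TT_{\Psi_S(x)}S$ while $\grad_y\dc(x,y)|_{y=\Psi_S(x)}=\pm\mathcal{N}(\Psi_S(x))$ is normal to $S$; this is exactly your envelope argument in slightly different packaging. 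Where your proof is genuinely more complete is the unique nearest point property: the paper simply asserts it right after Remark~\ref{inversion}, without explaining why $\Psi_S(x)$ is the foot point and $|t(x)|=\delta\cc(x)$, whereas you supply the missing argument --- the enlargement $U_0\Subset U_1$, the first-variation identification of the momentum at the foot point as $\pm\mathcal{N}(y)$ (equivalently, the observation via Lemma~\ref{LEM1} that $\grad_y\dc(x,y)$ must be normal to $S$ at a minimizer), and the appeal to injectivity of $\Phi$ on $U_1\times\,]-\epsilon_1,\epsilon_1[$. That extra work is precisely what is needed to justify the paper's summary sentence, and your proof is in this respect more rigorous than the original while remaining the same in spirit.
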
 \begin{proof}We just have to prove the last claim. To
this aim, let $X\in\XX(\GG)$ and set $y:=\Psi_S(x)$, where
$\Psi_S(x)$ denotes the projection along $U_0\subset S_0$ of the
point $x\in U:=\Phi(U_0\times]-\epsilon_0, \epsilon_0[)$. Moreover
set $t:=\dc(x, y)$. We have
\begin{eqnarray}\nonumber &&\langle\grad\,\delta\cc(x),
X\rangle\\ \nonumber &=&\langle\grad\, \dc(x, \Psi_S(x)), X\rangle
\\\label{popi}&=&\big\langle\big(\grad_x \dc(x, y)\big)\big|_{y=\Psi_S(x)}, X\big\rangle + \Big\langle
\big[\mathcal{J}_x\Psi_S(x)\big]X,\big(\grad_y \dc(x,
y)\big)\big|_{y=\Psi_S(x)}\Big\rangle.\end{eqnarray}

Now let us introduce the following further notation.

\begin{oss}Let $x\in\GG$, $y\in\GG\setminus\HH_2(x)=\{z=\esp(z\cc, z\cd)\in\GG:
z\cc=x\cc\}$
 and set $t:=\dc(x, y)$.
 Moreover, let \[\widetilde{\nu}(y):=\nu_{\,{\mathbb{S}\sr(x,
t)}}(y)\] denote the Riemannian unit normal along the CC-sphere
${\mathbb{S}\sr(x, t)}$ at any regular point $y\in
{\mathbb{S}\sr(x, t)}$.  We stress that, under our hypotheses, the
point $y$ turns out to be  a regular point of ${\mathbb{S}\sr(x,
t)}$; see, for more details, Remark \ref{k22}. Below we shall set
$$\mathcal{N}_{x, t}(y):=\frac{\widetilde{\nu}(y)}{|\PH(\widetilde{\nu}(y))|}.$$
\end{oss}

 \noindent According to the
results of Section \ref{GLem}, we immediately get that:
\begin{eqnarray*}
\grad_x \dc(x, y)&=&\mathcal{N}_{y, t}(x),\\\grad_y \dc(x,
y)&=&\mathcal{N}_{x, t}(y).\end{eqnarray*}

 \noindent By applying the results of  Section \ref{GLem}, together with the explicit
 form of CC-geodesic for $2$-step Carnot
  groups (see Section \ref{2-stepgeod}),  we get that

  $$\mathcal{N}_{y, t}(x)=\big(e^{-C\cc(\varpi(y)) t}\nn(y),
  \varpi(y)\big).$$Notice that
$\mathcal{N}_{y, t}$ is $\cont^{k-1}$-smooth as well as
$\mathcal{N}=(\nn, \varpi)$. Furthermore, it is easy to see that
$\mathcal{N}_{x, t}(y)=\pm \mathcal{N}(y)$, where  the sign only
depends on the given orientation of $S$.

By the previous discussion and \eqref{popi} we get that

\begin{eqnarray*}\langle\grad\,\delta\cc(x),
X\rangle &=&\big\langle\mathcal{N}_{\Psi_S(x), t}(x), X\big\rangle
+ \big\langle \big[\mathcal{J}_x\Psi_S(x)\big]X,\mathcal{N}_{x,
t}(\Psi_S(x))\big\rangle.\end{eqnarray*} Since
$\big[\mathcal{J}_x\Psi_S(x)\big]X\in\TT_{\Psi_S(x)} S$, by using
the fact that $\mathcal{N}_{x, t}(\Psi_S(x))$ is normal to $S$ at
$\Psi_S(x)$  one gets $$\big\langle
\big[\mathcal{J}_x\Psi_S(x)\big]X,\mathcal{N}_{x,
t}(\Psi_S(x))\big\rangle=0.$$Therefore
$\langle\grad\,\delta\cc(x), X\rangle
=\big\langle\mathcal{N}_{\Psi_S(x), t}(x), X\big\rangle.$ By the
arbitrariness of $X\in\XG$, it follows that $\grad\, \delta\cc$ is
of class $\cont^{k-1}$ on $U\setminus U_0$. Hence$\delta\, \cc$ is
of class $\cont^{k}$ on $U\setminus U_0$. This achieves the proof.
\end{proof}
\vspace{1cm}

\vspace{2cm}

{\footnotesize \noindent Nicola Arcozzi:
\\Dipartimento di Matematica, Universit\`a degli Studi di Bologna,\\
  Piazza di P.ta S.Donato, 5, 40126  Bologna, Italia\,
 \\ {\it E-mail address}:  {\textsf arcozzi@dm.unibo.it}}\\

{\footnotesize \noindent Fausto Ferrari:
\\Dipartimento di Matematica, Universit\`a degli Studi di Bologna,\\
  Piazza di P.ta S.Donato, 5, 40126  Bologna, Italia\,
 \\ {\it E-mail address}:  {\textsf ferrari@dm.unibo.it}}}\\

{\footnotesize \noindent Francescopaolo Montefalcone:\\
Dipartimento di Matematica, Universit\`{a} degli Studi di Trento,\\
Povo (Trento)- Via Sommarive, 14 Italia\\
Dipartimento di Matematica, Universit\`a degli Studi di Bologna,\\
  Piazza di P.ta S.Donato, 5, 40126  Bologna, Italia\,
 \\ {\it E-mail address}:  {\textsf montefal@dm.unibo.it}}

\end{document}